\documentclass[11pt,letterpaper]{article}

\usepackage{fullpage}

\usepackage{graphicx,subfigure}

\def\showauthornotes{2}

\def\showkeys{0}
\def\showdraftbox{1}
\def\showcolorlinks{1}
\def\usemicrotype{1}
\def\showfixme{1}

\def\arxivmode{0}
\def\fastmode{0}

\ifnum\fastmode=0
\usepackage{etex}
\fi

\ifnum\fastmode=0
\usepackage[l2tabu, orthodox]{nag}
\fi

\usepackage{xspace,enumerate}

\usepackage[dvipsnames]{xcolor}

\ifnum\fastmode=0
\usepackage[T1]{fontenc}
\usepackage[full]{textcomp}
\fi

\usepackage[american]{babel}

\usepackage{mathtools}

\usepackage{amsthm}

\newtheorem{theorem}{Theorem}[section]
\newtheorem*{theorem*}{Theorem}

\newtheorem*{proposition*}{Proposition}
\newtheorem{lemma}[theorem]{Lemma}
\newtheorem*{lemma*}{Lemma}
\newtheorem{corollary}[theorem]{Corollary}
\newtheorem*{conjecture*}{Conjecture}

\newtheorem*{fact*}{Fact}

\newtheorem*{exercise*}{Exercise}

\newtheorem*{hypothesis*}{Hypothesis}

\theoremstyle{definition}
\newtheorem{definition}[theorem]{Definition}

\newtheorem{exercise-easy}[theorem]{Exercise}
\newtheorem{exercise-med}[theorem]{Exercise}
\newtheorem{exercise-hard}[theorem]{Exercise$^\star$}

\newtheorem*{claim*}{Claim}

\newtheorem{remark}[theorem]{Remark}
\newtheorem*{remark*}{Remark}

\newtheorem*{observation*}{Observation}

\ifnum\arxivmode=0
\usepackage{newpxtext} %
\usepackage{textcomp} %
\usepackage[varg,bigdelims]{newpxmath}
\usepackage[scr=rsfso]{mathalfa}%
\usepackage{bm} %
\let\mathbb\varmathbb
\fi

\ifnum\arxivmode=1
\usepackage[varg]{pxfonts} %
\usepackage{textcomp} %
\usepackage[scr=rsfso]{mathalfa}%
\usepackage{bm} %
\linespread{1.08}%
\fi

\ifnum\showkeys=1
\usepackage[color]{showkeys}
\fi

\makeatletter
\@ifpackageloaded{hyperref}
{}
{\usepackage[pagebackref]{hyperref}}

\definecolor{bleudefrance}{rgb}{0.01, 0.1, 1.0}
\definecolor{azure}{rgb}{0.0, 0.5, 1.0}

\ifnum\showcolorlinks=1
\hypersetup{
pagebackref=true,
colorlinks=true,
urlcolor=blue,
linkcolor=blue,
citecolor=OliveGreen}
\fi

\ifnum\showcolorlinks=0
\hypersetup{
pagebackref=true,
colorlinks=false,
pdfborder={0 0 0}}
\fi

\usepackage{prettyref}

\newcommand{\savehyperref}[2]{\texorpdfstring{\hyperref[#1]{#2}}{#2}}

\newrefformat{eq}{\savehyperref{#1}{\textup{(\ref*{#1})}}}
\newrefformat{lem}{\savehyperref{#1}{Lemma~\ref*{#1}}}
\newrefformat{def}{\savehyperref{#1}{Definition~\ref*{#1}}}
\newrefformat{obs}{\savehyperref{#1}{Observation~\ref*{#1}}}
\newrefformat{ass}{\savehyperref{#1}{Assumption~\ref*{#1}}}
\newrefformat{thm}{\savehyperref{#1}{Theorem~\ref*{#1}}}
\newrefformat{cor}{\savehyperref{#1}{Corollary~\ref*{#1}}}
\newrefformat{cha}{\savehyperref{#1}{Chapter~\ref*{#1}}}
\newrefformat{sec}{\savehyperref{#1}{Section~\ref*{#1}}}
\newrefformat{app}{\savehyperref{#1}{Appendix~\ref*{#1}}}
\newrefformat{tab}{\savehyperref{#1}{Table~\ref*{#1}}}
\newrefformat{fig}{\savehyperref{#1}{Figure~\ref*{#1}}}
\newrefformat{hyp}{\savehyperref{#1}{Hypothesis~\ref*{#1}}}
\newrefformat{alg}{\savehyperref{#1}{Algorithm~\ref*{#1}}}
\newrefformat{rem}{\savehyperref{#1}{Remark~\ref*{#1}}}
\newrefformat{item}{\savehyperref{#1}{Item~\ref*{#1}}}
\newrefformat{step}{\savehyperref{#1}{step~\ref*{#1}}}
\newrefformat{conj}{\savehyperref{#1}{Conjecture~\ref*{#1}}}
\newrefformat{fact}{\savehyperref{#1}{Fact~\ref*{#1}}}
\newrefformat{prop}{\savehyperref{#1}{Proposition~\ref*{#1}}}
\newrefformat{prob}{\savehyperref{#1}{Problem~\ref*{#1}}}
\newrefformat{claim}{\savehyperref{#1}{Claim~\ref*{#1}}}
\newrefformat{relax}{\savehyperref{#1}{Relaxation~\ref*{#1}}}
\newrefformat{red}{\savehyperref{#1}{Reduction~\ref*{#1}}}
\newrefformat{part}{\savehyperref{#1}{Part~\ref*{#1}}}
\newrefformat{ex}{\savehyperref{#1}{Exercise~\ref*{#1}}}
\newrefformat{property}{\savehyperref{#1}{Property~\ref*{#1}}}

\newcommand{\Sref}[1]{\hyperref[#1]{\S\ref*{#1}}}

\usepackage{nicefrac}

\ifnum\fastmode=0
\ifnum\usemicrotype=1
\usepackage{microtype}
\fi
\fi

\ifnum\showauthornotes=2
\newcommand{\mynotes}[1]{{\sffamily\small\color{teal}{#1}}\medskip}
\newcommand{\Authornote}[2]{{\sffamily\small\color{Maroon}{[#1: #2]}}\medskip}
\newcommand{\Authornotecolored}[3]{{\sffamily\small\color{#1}{[#2: #3]}}}
\newcommand{\Authorcomment}[2]{{\sffamily\small\color{gray}{[#1: #2]}}}
\newcommand{\Authorstartcomment}[1]{\sffamily\small\color{gray}[#1: }

\newcommand{\Authorfnote}[2]{\footnote{\color{red}{#1: #2}}}
\newcommand{\Authorfixme}[1]{\Authornote{#1}{\textbf{??}}}
\newcommand{\Authormarginmark}[1]{\marginpar{\textcolor{red}{\fbox{\Large #1:!}}}}
\newcommand{\myexplain}[1]{{\sffamily\small\color{red}{\noindent [Explanation:\medskip\newline \begin{quote}#1\hfill]\end{quote}}}\medskip}
\newcommand{\explain}[1]{{\sffamily\small\color{red}{#1}}\medskip}

\else

\newcommand{\mynotes}[1]{}
\newcommand{\Authornote}[2]{}
\newcommand{\Authornotecolored}[3]{}
\newcommand{\Authorcomment}[2]{}
\newcommand{\Authorstartcomment}[1]{}

\newcommand{\Authorfnote}[2]{}
\newcommand{\Authorfixme}[1]{}
\newcommand{\Authormarginmark}[1]{}
\newcommand{\myexplain}[1]{}
\newcommand{\explain}[1]{}

\ifnum\showauthornotes=1
\renewcommand{\myexplain}[1]{{\sffamily\small\color{red}{\noindent \begin{quote}{\bf Explanation:} \medskip\newline #1\end{quote}}}\medskip}
\fi

\fi

\ifnum\showfixme=0

\fi

\usepackage{boxedminipage}

\newcommand{\Esymb}{\mathbb{E}}
\newcommand{\Psymb}{\mathbb{P}}

\DeclareMathOperator*{\E}{\Esymb}

\DeclareMathOperator*{\ProbOp}{\Psymb}

\renewcommand{\Pr}{\ProbOp}

\newcommand{\textparen}[1]{\text{(#1)}}

\ifx\because\undefined
\newcommand{\because}[1]{\textparen{because #1}}
\else
\renewcommand{\because}[1]{\textparen{because #1}}
\fi

\newcommand{\seteq}{\mathrel{\mathop:}=}

\newcommand{\bigmid}{~\big|~}

\newcommand\bdot\bullet

\ifx\mathds\undefined %

\else

\fi

\DeclareMathOperator{\vol}{vol}

\DeclareMathOperator{\dist}{dist}

\newcommand{\Z}{\mathbb Z}
\newcommand{\N}{\mathbb N}
\newcommand{\R}{\mathbb R}
\newcommand{\C}{\mathbb C}

\newcommand{\cA}{\mathcal A}

\newcommand{\cE}{\mathcal E}

\newcommand{\cG}{\mathcal G}

\newcommand{\cM}{\mathcal M}

\newcommand{\cS}{\mathcal S}

\newcommand{\cV}{\mathcal V}

\newcommand{\cX}{\mathcal X}

\renewcommand{\leq}{\leqslant}

\renewcommand{\geq}{\geqslant}

\ifnum\showdraftbox=1

\else

\fi

\let\epsilon=\varepsilon

\numberwithin{equation}{section}

\newcommand\MYcurrentlabel{xxx}

\newcommand{\MYstore}[2]{%
  \global\expandafter \def \csname MYMEMORY #1 \endcsname{#2}%
}

\newcommand{\MYload}[1]{%
  \csname MYMEMORY #1 \endcsname%
}

\newcommand{\MYnewlabel}[1]{%
  \renewcommand\MYcurrentlabel{#1}%
  \MYoldlabel{#1}%
}

\newcommand{\MYdummylabel}[1]{}

\newcommand{\torestate}[1]{%
  \let\MYoldlabel\label%
  \let\label\MYnewlabel%
  #1%
  \MYstore{\MYcurrentlabel}{#1}%
  \let\label\MYoldlabel%
}

\newcommand{\restatetheorem}[1]{%
  \let\MYoldlabel\label
  \let\label\MYdummylabel
  \begin{theorem*}[Restatement of \prettyref{#1}]
    \MYload{#1}
  \end{theorem*}
  \let\label\MYoldlabel
}

\newcommand{\restatelemma}[1]{%
  \let\MYoldlabel\label
  \let\label\MYdummylabel
  \begin{lemma*}[Restatement of \prettyref{#1}]
    \MYload{#1}
  \end{lemma*}
  \let\label\MYoldlabel
}

\newcommand{\restateprop}[1]{%
  \let\MYoldlabel\label
  \let\label\MYdummylabel
  \begin{proposition*}[Restatement of \prettyref{#1}]
    \MYload{#1}
  \end{proposition*}
  \let\label\MYoldlabel
}

\newcommand{\restatefact}[1]{%
  \let\MYoldlabel\label
  \let\label\MYdummylabel
  \begin{fact*}[Restatement of \prettyref{#1}]
    \MYload{#1}
  \end{fact*}
  \let\label\MYoldlabel
}

\newcommand{\restate}[1]{%
  \let\MYoldlabel\label
  \let\label\MYdummylabel
  \MYload{#1}
  \let\label\MYoldlabel
}

\newcommand{\addreferencesection}{
  \phantomsection
\ifnum\stocmode=0
  \addcontentsline{toc}{section}{References}
\else
  \addcontentsline{toc}{section}{References \hspace*{1in} --------- End of extended abstract ---------}
\fi

}

\newcommand{\e}{\epsilon}

\renewcommand{\paragraph}[1]{\medskip\noindent{\bf #1.}}

\allowdisplaybreaks

\sloppy

\usepackage{paralist}

\usepackage{comment}

\let\pref=\prettyref

\newcommand{\diam}{\mathrm{diam}}

\newcommand{\vertiii}[1]{{\left\vert\kern-0.25ex\left\vert\kern-0.25ex\left\vert #1 
          \right\vert\kern-0.25ex\right\vert\kern-0.25ex\right\vert}}

\usepackage{enumitem}
\makeatletter
\def\namedlabel#1#2{\begingroup
    #2%
    \def\@currentlabel{#2}%
    \phantomsection\label{#1}\endgroup
}
\makeatother

\usepackage[normalem]{ulem}
\usepackage{stmaryrd}
\usepackage{accents}
\newcommand{\ubar}[1]{\underaccent{\bar}{#1}}

\newcommand{\lra}{\leftrightarrow}
\newcommand{\energy}{\mathscr{E}}
\newcommand{\ase}{\textrm{{\em a.s.e.}}}
\newcommand{\spase}{\ \ase}

\newcommand{\cmnt}[1]{}

\newcommand{\reff}{\mathsf{R}_{\mathrm{eff}}}

\newcommand{\green}{\mathsf{g}}
\newcommand{\Green}{\mathsf{Gr}}

\newcommand{\1}{\mathbb{1}}

\renewcommand{\mathbb}{\vvmathbb}

\newcommand{\rgraphs}{\mathscr{G}_{\bullet}}
\newcommand{\rrgraphs}{\mathscr{G}_{\bullet\bullet}}
\newcommand{\Mod}{\mathsf{Mod}}

\newcommand{\rball}{B^G}
\newcommand{\crball}{\bar{B}^G}
\newcommand{\con}{c}
\newcommand{\amod}{\mathsf{M}}

\newcommand{\up}[1]{\bar{#1}}
\newcommand{\down}[1]{\ubar{#1}}

\setcounter{page}{1}
\addtolength{\skip\footins}{2pc plus 5pt}

\newcommand*{\diffeo}{%
  \mathrel{\vcenter{\offinterlineskip
  \hbox{$\sim$}\vskip-.35ex\hbox{$\sim$}\vskip-.35ex\hbox{$\sim$}}}}

\usepackage{import}
\usepackage{xifthen}
\usepackage{pdfpages}
\usepackage{transparent}

\begin{document}

\title{Relations between scaling exponents \\ in unimodular random graphs}

\author{James R. Lee\thanks{University of Washington}}
\date{}

\maketitle

\begin{abstract}
   We investigate the validity of the ``Einstein relations''
   in the general setting of unimodular random networks.
   These are equalities relating scaling exponents:
   \begin{align*}
      d_w &= d_f + \tilde{\zeta}, \\
      d_s &= 2 d_f/d_w,
   \end{align*}
   where $d_w$ is the walk dimension, $d_f$ is the fractal dimension, 
   $d_s$ is the spectral dimension,
   and $\tilde{\zeta}$ is the resistance exponent.
   Roughly speaking, this relates the mean displacement and return probability
   of a random walker to the density and conductivity of the underlying medium.
   We show that if $d_f$ and $\tilde{\zeta} \geq 0$ exist, then
   $d_w$ and $d_s$ exist, and the aforementioned equalities hold.
   Moreover, our primary new estimate $d_w \geq d_f + \tilde{\zeta}$,
   is established for all $\tilde{\zeta} \in \R$.

   For the uniform infinite planar triangulation (UIPT),
   this yields the consequence
   $d_w=4$ using $d_f=4$ (Angel 2003) and $\tilde{\zeta}=0$ (established here
   as a consequence of the Liouville Quantum Gravity theory, following Gwynne-Miller 2020
   and Ding-Gwynne 2020).
   The conclusion $d_w=4$ had been previously established by Gwynne and Hutchcroft (2018) using
   more elaborate methods.
   A new consequence is that $d_w = d_f$ for the uniform infinite Schnyder-wood decorated triangulation,
   implying that the simple random walk is subdiffusive, since $d_f > 2$.

   For the random walk on $\Z^2$ driven by conductances from an
   exponentiated Gaussian free field with
   exponent $\gamma > 0$, one has $d_f = d_f(\gamma)$ and $\tilde{\zeta}=0$ (Biskup, Ding, and Goswami 2020).
   Thus the scaling relations yield $d_s=2$ and $d_w = d_f$, confirming two predictions of those authors.
\end{abstract}

\newpage

\begingroup
\hypersetup{linktocpage=false}
\setcounter{tocdepth}{2}
\tableofcontents
\endgroup

\section{Introduction}

Consider an infinite, locally-finite graph $\cG$ and a subgraph $G$ of $\cG$.
For $x \in V(\cG)$, let $B^{\cG}(x,R)$, denote the graph ball of radius $R$,
and let $\tilde{B}(x,R) \seteq B^{\cG}(x,R) \cap V(G)$ denote this ball restricted to $G$.
Let $d^{\cG}(x,y)$ denote
the path distance between a pair $x,y \in V(\cG)$.
Denote by $\{X_n\}$ the simple random walk on $G$, and the discrete-time heat kernel
\[
   p^G_n(x,y) \seteq \Pr[X_n = y \mid X_0 = x].
\]
We write $\reff^G(S \leftrightarrow T)$ for the effective resistance between
two subsets $S,T \subseteq V(G)$.

For a variety of models arising in statistical physics, certain asymptotic
geometric and spectral properties of the graph are known or conjectured
to have scaling exponents:
\begin{align}
   |\tilde{B}(x,R)| &\sim R^{d_f} \nonumber \\
   \max_{1 \leq t \leq n} d^{\cG}(X_0,X_t) &\sim n^{1/d_w} \nonumber\\
   \reff^G\left(\tilde{B}(x,R) \leftrightarrow V(G) \setminus \tilde{B}(x, 2R)\right) &\sim R^{\tilde{\zeta}} \label{eq:resist-sim}  \\
   p_{2n}^G(x,x) &\sim n^{-d_s/2},\nonumber
\end{align}
where one takes $n,R \to \infty$, but
we leave the meaning of ``$\sim$'' imprecise for a moment.
These exponents are, respectively, referred to as the {\em fractal dimension}, {\em walk dimension},
{\em spectral dimension}, and {\em resistance exponent}.
We refer to the extensive discussion in \cite[Ch. 5--6]{BH00}.

Moreover, by modeling the subgraph $G$ as a homogenous underlying substrate
with density and conductivity prescribed by $d_f$ and $\tilde{\zeta}$, one obtains
the plausible relations
\begin{align}
   d_w &= d_f + \tilde{\zeta} \label{eq:e1}\\
   d_s &= \frac{2 d_f}{d_w}. \label{eq:e2}
\end{align}

In the regime $\tilde{\zeta} > 0$, these relations have been rigorously verified under somewhat stronger assumptions
in the setting of {\em strongly recurrent graphs} (see \cite{Telcs90,Telcs95}) and \cite{Barlow98,KM08,Kumagai14}).
In the latter set of works, the most significant departure from our assumptions is the stronger
requirement for uniform control on pointwise effective resistances of the form
\begin{equation}\label{eq:reff-strong}
   \max \left\{ \reff^G(x \lra y) : y \in B^G(x,R) \right\} \leq R^{\tilde{\zeta}+o(1)}, \quad x\in V(G).
\end{equation}
Such methods have been extended to the setting where $(G,\rho)$ is a random rooted graph (\cite{KM08,BJKS08})
under the statistical assumption that these relations hold sufficiently often for all sufficiently large scales,
and only for balls around the root.
Our main contribution
is to establish \eqref{eq:e1} and \eqref{eq:e2} under somewhat less restrictive conditions, but
using an additional feature of many such models:  Unimodularity of the random rooted graph $(G,\rho)$.
When $\tilde{\zeta} \leq 0$,
it has been significantly more challenging to characterize
situations where \eqref{eq:e1}--\eqref{eq:e2} hold; see, for instance, Open Problem III in \cite{KumagaiICM}.
Our main new estimate is the speed relation $d_w \geq d_f + \tilde{\zeta}$,
which is established for all $\tilde{\zeta} \in \R$.
In particular, this shows that the random walk is subdiffusive whenever $d_f+\tilde{\zeta} > 2$,
and applies equally well to models where the random walk is transient.
Let us now highlight some notable settings in which the relations can be applied.

\paragraph{The IIC in high dimensions}
As a prominent example,
consider the resolution by Kozma and Nachmias \cite{kn09}
of the Alexander-Orbach conjecture for the incipient infinite cluster (IIC) of critical
percolation on $\Z^d$, for $d$ sufficiently large.
If $(G,0)$ denotes the IIC, then in our language, $\cG=G$, as they consider
the intrinsic graph metric, and 
establish that for every $\lambda > 1$ and $r \geq 1$,
with probability at least $1-p(\lambda)$, it holds that
\begin{gather}
   \lambda^{-1} r^2 \leq |B^{G}(0,r)| \leq \lambda r^2, \\
   \reff^{G}(0 \leftrightarrow \partial B^{G}(0,r)) \geq \lambda^{-1} r,
\end{gather}
where $p(\lambda) \leq O(\lambda^{-q})$ for some $q > 1$.
One should consider this a statistical verification that $d_f = 2$ and $\tilde{\zeta} = 1$,
as in this setting, one gets the analog of \eqref{eq:reff-strong} for free
free from the trivial bound $\reff^{\textrm{IIC}}(0 \lra x) \leq d^{\textrm{IIC}}(0,x)$.

Earlier, Barlow, J\'arai, Kumagai, and Slade \cite{BJKS08} verified \eqref{eq:e1}--\eqref{eq:e2} under
these assumptions, allowing Kozma and Nachmias to confirm the conjectured values $d_w = 3$ and $d_s = 4/3$.
One can consult \cite[\S 4.2.2]{KumagaiICM} for several further examples where $\tilde{\zeta} > 0$
and \eqref{eq:e1}--\eqref{eq:e2} hold using the strongly recurrent theory.

\paragraph{The uniform infinite planar triangulation}
Consider, on the other hand, the uniform infinite planar triangulation (UIPT) considered
as a random rooted graph $(G,\rho)$.  
In this case, Angel \cite{angel03} established that almost surely
\begin{equation}\label{eq:angel}
   \lim_{R \to \infty} \frac{\log |B^G(\rho,R)|}{\log R} = 4,
\end{equation}
and Gwynne and Miller \cite{GM21} showed that almost surely
\begin{equation*}\label{eq:gm-reff}
   \lim_{R \to \infty} \frac{\log \reff^G(\rho \leftrightarrow V(G) \setminus B^G(\rho,R))}{\log R} = 0\,.
\end{equation*}
This falls short of verifying \eqref{eq:resist-sim}.
Nevertheless, we show in \pref{sec:LQG} that $\tilde{\zeta}=0$ is a consequence of the
Liouville Quantum Gravity (LQG) estimates derived in \cite{DMS14,GM21,GMS19,GHS20,DG20}.
But while the known statistics of $|B^G(\rho,R)|$ are suitable to allow application of the
strongly recurrent theory, this does not hold for the effective resistance bounds.

This is highlighted by Gwynne and Hutchcroft \cite{GH20} who establish $d_w = 4$
using even finer aspects of the LQG theory.
The authors state
``while it may be possible in principle to prove $\beta \geq 4$ using electrical techniques,
doing so appears to require matching upper and lower bounds for effective resistances [...]
differing by at most a constant order multiplicative factor.''
Our methods show that, when leveraging unimodularity, even coarse estimates
with subpolynomial errors suffice.

It is open whether $\tilde{\zeta} = 0$ or $d_w = 4$ for the uniform infinite planar {\em quadrangulation} (UIPQ),
but our verification of \eqref{eq:e1} shows that only one such equality needs to be established.

\paragraph{Random planar maps in the $\gamma$-LQG universality class}
More generally, we will establish in \pref{sec:LQG} that
$\tilde{\zeta}=0$ whenever a random planar map
$(G,\rho)$ can be coupled to a $\gamma$-mated-CRT map with $\gamma \in (0,2)$.
The connection between such maps and LQG was established in \cite{DMS14}.

This family includes the UIPT (where $\gamma = \sqrt{8/3}$).  Ding and Gwynne \cite{DG20} have shown that
$d_f$ exists for such maps, and Gwynne and Huthcroft \cite{GH20} established that $d_w=d_f$ for most known examples,
but not for the uniform infinite Schnyder-wood decorated triangulation \cite{LSW17} (where $\gamma=1$),
for a technical reason underlying the construction of a certain coupling (see \cite[Rem. 2.11]{GH20}).
We mention this primarily to emphasize the utility of a general theorem, since
it is likely the technical obstacle could have been circumvented with sufficient effort.

\paragraph{Random walk driven by a Gaussian free field}
Biskup, Ding, and Goswami \cite{BDG20} study the model of random walk on $\cG = \Z^2$
with random conductances $\con^G(\{u,v\}) = e^{\gamma (\eta_v - \eta_u)}$, where
$\gamma > 0$, and $\{\eta_v : v \in \Z^2\}$  is the discrete Gaussian free field (GFF) 
on $\Z^2$ grounded at the origin.

In this case, one has
\begin{equation*}\label{eq:gff-df}
   d_f = \begin{cases}
            2 + 2(\gamma/\gamma_c)^2 & \gamma \leq \gamma_c = \sqrt{\pi/2}, \\
            4 \gamma/\gamma_c & \textrm{otherwise.}
         \end{cases}
\end{equation*}
(See below for the definition of $d_f$ when the edges have conductances.)

In \pref{sec:gff}, we recall the model formally and
observe that the paper \cite{BDG20} contains estimates that
establish $\tilde{\zeta} = 0$ for every $\gamma > 0$.
Hence the relations \eqref{eq:e1}--\eqref{eq:e2}
yield $d_f=d_w$ and $d_s=2$, both of which were conjectured
in \cite{BDG20}, though only annealed estimates were obtained.
(See \pref{sec:quenched} for a brief discussion of why 
our approach yields two-sided quenched bounds.)

\paragraph{The IIC in dimension two}
Consider the incipient infinite cluster for 2D critical percolation \cite{Kesten86iic}, which can
be realized as a unimodular random subgraph $(G,0)$ of $\cG=\Z^2$ \cite{Jarai03}.
It is known that $d_f = 91/48$ in the 2D hexagonal lattice \cite{LSW02,Smirnov01}, and the same
value is conjectured to hold for all 2D lattices regardless of the local structure.

Existence of the exponent $\tilde{\zeta}$ is open for any lattice; experiments
give the estimate $\tilde{\zeta} = 0.9825 \pm 0.0008$ \cite{Grassberger99}.
The most precise experimental estimate for $d_w = 2.8784 \pm 0.0008$ is derived
from estimates for $\tilde{\zeta}$, and our verfication of \eqref{eq:e1} puts
this on rigorous footing (assuming, of course, that $\tilde{\zeta}$ is well-defined).
Indeed, one motivation for our work was the question of whether the exponent
$d_w$ should be a conformal invariant of critical 2D percolation,
and it is plausibly more tractable to establish this for $\tilde{\zeta}$.

\subsection{Reversible random networks}
\label{sec:stationary}

We consider random rooted networks $(G,\rho,\con^G,\xi)$ where
$G$ is a locally-finite, connected graph, $\rho \in V(G)$, and $\con^G : E(G) \to [0,\infty)$ are edge conductances.
We allow $E(G)$ to contain self-loops $\{v,v\}$ for $v \in V(G)$.
Here, $\xi : V(G) \cup E(G) \to \Xi$ is an auxiliary marking, where $\Xi$ is some Polish mark space.
We will sometimes use the notation $(G,\rho,\xi_1,\xi_2,\ldots,\xi_k)$ to reference a random rooted network
with marks $\xi_i : V(G) \cup E(G) \to \Xi_i$, which we intend as shorthand for $(G,\rho,(\xi_1,\xi_2,\ldots,\xi_k))$,
where the mark space is the Cartesian product $\Xi_1 \times \cdots \times \Xi_k$.

Denote by $\{X_n\}$ the random walk on $G$ with $X_0=\rho$ and transition probabilities
\begin{equation}\label{eq:transition}
   p^G_n(u,v) \seteq \Pr\left[X_{n+1} = v \mid X_n = u\right] = \frac{\con^G(\{u,v\})}{\con^G_u},
\end{equation}
where we denote $\con^G_u \seteq \sum_{v : \{u,v\} \in E(G)} \con^G(\{u,v\})$.
Say that $(G,\rho,\con^G,\xi)$ is a {\em reversible random network} if:
\begin{enumerate}
   \item Almost surely $\con^G_{\rho} > 0$.
   \item $(G,X_0,X_1,\con^G,\xi)$ and $(G,X_1,X_0,\con^G,\xi)$ have the same law.
\end{enumerate}
We will usually write a reversible random network as $(G,\rho,\xi)$, allowing
the conductances to remain implicit.
Note that we allow the possibility $\con^G(\{u,v\})=0$ when $\{u,v\} \in E(G)$.
In this sense, random walks occur on the subnetwork $G_+$ with $V(G_+) = \{ x \in V(G) : \con^G_x > 0 \}$
and $E(G_+) = \{ \{x,y\} \in V(G) : \con^G(\{x,y\}) > 0\}$, while distances are measured in 
the path metric $d^G$.

\smallskip 

Throughout, we will make the following mild boundedness assumption:
\[
   \E[1/\con^G_{\rho}] < \infty\,.
\]
This is analogous to the assumption $\E[\deg_G(\rho)] < \infty$ that appears
often in the setting of unimodular random graphs, which are defined
in \pref{sec:mtp} when we need to employ the Mass-Transport Principle.

For now, it suffices to say that there is a one-to-one correspondence:
\[
\begin{matrix}
   (G,\rho,\xi) \textrm{ reversible} & \longleftrightarrow & (\tilde{G},\tilde{\rho},\tilde{\xi}) \textrm{ unimodular} \\
   \E[1/\con^G_{\rho}] < \infty & & \E[\con^{\tilde{G}}_{\tilde{\rho}}] < \infty\vphantom{\frac{\bigoplus}{\bigoplus}}
\end{matrix}
\]
Indeed, if $\mu$ and $\tilde{\mu}$ are the respective measures, then the correspondence
is given by a change of law
\[
   \frac{d\mu}{d\tilde{\mu}}(G_0,\rho_0,\xi_0) = \frac{\con^{G_0}_{\rho_0}}{\E[\con^{\tilde{G}}_{\tilde{\rho}}]}\,.
\]
where $d\mu/d\tilde{\mu}$ is the Radon-Nikodym derivative.
We refer to \cite{aldous-lyons} for an extensive reference on unimodular random graphs,
and to \cite[Prop. 2.5]{bc12} for the connection between unimodular and reversible random graphs.

\subsection{Almost sure scaling exponents}

Consider two sequences $\{A_n\}$ and $\{B_n\}$ of positive real-valued random variables.
Write $A_n \lessapprox B_n$ if almost surely:
\[
   \limsup_{n \to \infty} \frac{\log A_n - \log B_n}{\log n} \leq 0,
\]
and $A_n \diffeo B_n$ for the conjuction of $A_n \lessapprox B_n$ and $B_n \lessapprox A_n$.
Note that $A_n \lessapprox n^d$ if and only if, for every $\delta > 0$,
almost surely $A_n \leq n^{d+\delta}$ for $n$ sufficiently large.

In what follows, we consider a reversible random network $(G,\rho)$ (cf. \pref{sec:stationary}).
Define the random variables:
\begin{align*}
   \sigma_R &\seteq \min \{ n \geq 0 : d^{G}(X_0,X_n) > R \}, \\
   \cM_n &\seteq \max_{0 \leq t \leq n} d^{G}(X_0,X_t),
\end{align*}
and define the walk exponents $d_w$ and $\beta$ by
\begin{align*}
   \sigma_R  &\diffeo R^{d_w} \\
   \cM_n &\diffeo n^{1/\beta},
\end{align*}
assuming the corresponding limits exist.  In that case we, we will use the
language ``$d_w$ exists'' or ``$\beta$ exists.''\footnote{In the next section,
   we control the annealed variants as well, where one takes
expectations over the random walk.}

Denote the volume function
\[
   \vol^G(x,R) \seteq \sum_{y \in B^G(x,R)} c^G_y,
\]
and define $d_f$ as the asymptotic growth rate of the volume:
\[
   \vol^G(\rho,R) \diffeo R^{d_f},
\]
Define the spectral dimension by
\[
   p_{2n}^G(\rho,\rho) \diffeo n^{-d_s/2}.
\]

Let us define upper and lower resistance exponents.
Define $\tilde{\zeta}$ and $\tilde{\zeta}_0$ as
the largest and smallest values, respectively, such that, for every $\delta > 0$, almost surely,
for all but finitely many $R \in \N$:
\begin{equation}\label{eq:zeta-def}
   R^{\tilde{\zeta}-\delta}  \leq
   \reff^{G}\left(\rball(\rho,R^{1-\delta}) \leftrightarrow \crball(\rho, R)\right) 
   \leq \reff^G\left(\rho \leftrightarrow \crball(\rho,R)\right) \leq R^{\tilde{\zeta}_0+\delta},
\end{equation}
where we have denoted the complement of $\rball(\rho,R)$ in $G$ by
\[
   \crball(\rho,R) \seteq V(G) \setminus \rball(\rho,R).
\]
The exponents $\tilde{\zeta} \leq \tilde{\zeta}_0$ always exist and $\tilde{\zeta}_0 \geq 0$.
The exponent $\tilde{\zeta}$ is referred to as the ``resistance exponent''
in the statistical physics literature (see \cite[\S 5.3]{BH00}); see \pref{rem:resist} below.
We emphasize that all the exponents we define are not random variables, but functions of the law of $(G,\rho)$.
Our main theorem can then be stated as follows.

\begin{theorem}\label{thm:main}
   Suppose that $(G,\rho)$ is a reversible random network satisfying
   $\E[1/c^G_{\rho}] < \infty$.
   If $d_f$ exists and $\tilde{\zeta}=\tilde{\zeta}_0$, then
   the exponents
   $d_w$, $\beta$, and $d_s$ exist and it holds that
   \begin{align*}
      d_w &= \beta = d_f + \tilde{\zeta}, \\
      d_s &= \frac{2 d_f}{d_w}.
   \end{align*}
\end{theorem}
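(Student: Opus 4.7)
The plan is to establish $\sigma_R \diffeo R^{d_f + \tilde{\zeta}}$, invert this to obtain $\cM_n \diffeo n^{1/(d_f + \tilde{\zeta})}$, and deduce the spectral-dimension identity from a Nash-type heat-kernel estimate expressed in terms of the volume and resistance exponents. For the upper bound $d_w \leq d_f + \tilde{\zeta}_0$, I would apply the commute-time identity on the network in which $\crball(\rho, R)$ is collapsed to a single vertex, obtaining
\[
   \Ex[\rho]{\sigma_R} \;\leq\; 2\, \vol^G(\rho, R)\cdot \reff^G\!\left(\rho \leftrightarrow \crball(\rho, R)\right).
\]
The existence of $d_f$ together with the defining upper bound in \eqref{eq:zeta-def} yields $\Ex[\rho]{\sigma_R} \leq R^{d_f + \tilde{\zeta}_0 + o(1)}$ almost surely. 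A Markov inequality combined with Borel--Cantelli along the dyadic scales $R = 2^k$ then gives the quenched bound $\sigma_R \lessapprox R^{d_f + \tilde{\zeta}_0}$.

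For the main (and new) lower bound $d_w \geq d_f + \tilde{\zeta}$, the heuristic is that the walk cannot exit $\rball(\rho, R)$ without traversing a resistor of effective size at least $R^{\tilde{\zeta}-\delta}$ separating the inner ball $\rball(\rho, R^{1-\delta})$ from $\crball(\rho, R)$. Quantitatively, letting $g_R^G$ denote the Green's function of the walk killed on $\crball(\rho, R)$, one hopes to establish
\[
   \Ex[\rho]{\sigma_R} \;\geq\; \sum_{x \in \rball(\rho, R^{1-\delta})} c^G_x\, g_R^G(\rho, x) \;\gtrsim\; R^{\tilde{\zeta}-\delta}\,\vol^G(\rho, R^{1-\delta}) \;\gtrsim\; R^{d_f(1-\delta) + \tilde{\zeta} - 2\delta}.
\]
The nontrivial step is lower-bounding the Green's function on the inner shell: this is where unimodularity is essential. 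Using reversibility, one transplants the base-point of Green's-function bounds from $\rho$ to a typical point of the inner shell via the mass-transport principle, effectively averaging along a stationary trajectory of the walk.

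The passage from the annealed bound to the quenched statement $\sigma_R \gtrapprox R^{d_f + \tilde{\zeta}}$ is then handled by applying the annealed lower bound along many points $X_0, X_1, \ldots$ visited by the walk, exploiting the identity-in-law $(G, X_n) \sim (G, X_0)$ from reversibility, and combining with the ergodic theorem for stationary random walks on unimodular networks. The equality $d_w = \beta$ follows from the monotone duality $\cM_n \geq R \iff \sigma_R \leq n$, so any scaling $\sigma_R \diffeo R^{d_w}$ inverts to $\cM_n \diffeo n^{1/d_w}$. For the spectral dimension, I would then prove
\[
   p_{2n}^G(\rho, \rho) \;\asymp\; \frac{c_\rho^G}{\vol^G(\rho, \cM_n)}
\]
up to subpolynomial corrections, combining a Nash-type upper bound exploiting $\reff^G$ with a Cauchy--Schwarz lower bound $p_{2n}^G(\rho, \rho) \geq c_\rho^G \Pr[X_n \in S]^2/\vol^G(S)$ applied to $S = \rball(\rho, \cM_n)$. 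Substituting $\cM_n \diffeo n^{1/d_w}$ and $\vol^G(\rho, R) \diffeo R^{d_f}$ yields $p_{2n}^G(\rho, \rho) \diffeo n^{-d_f/d_w}$, i.e., $d_s = 2 d_f / d_w$.

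The principal obstacle I expect is the escape-time lower bound in the potentially transient regime $\tilde{\zeta} \leq 0$, where the strongly-recurrent machinery of Barlow--Kumagai does not directly apply and the pointwise resistance control \eqref{eq:reff-strong} is unavailable. Making unimodularity carry the weight of this missing control, uniformly in $R$ and for every $\tilde{\zeta} \in \R$, is the new ingredient; constructing the right mass-transport argument tying the Green's function on a typical trajectory to the shell-to-shell resistance will be the technical crux.
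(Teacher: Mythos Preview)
Your upper bound $d_w \leq d_f + \tilde{\zeta}_0$ via the commute-time identity and your spectral-dimension bounds via Cauchy--Schwarz and the Green kernel match the paper's arguments in \pref{sec:resistance}. The essential gap is in your lower bound $d_w \geq d_f + \tilde{\zeta}$.

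Your plan to lower-bound $\sum_{x \in \rball(\rho,R^{1-\delta})} c_x^G\, g_R^G(\rho,x)$ by $R^{\tilde{\zeta}-\delta}\vol^G(\rho,R^{1-\delta})$ requires a lower bound on $g_R^G(\rho,x)$ for typical $x$ in the inner ball. Since $g_R^G(\rho,x)/c_x^G = \Pr_x[\tau_\rho < \tau_{\crball(\rho,R)}] \cdot g_R^G(\rho,\rho)/c_\rho^G$, this amounts to controlling hitting probabilities of $\rho$ from $x$, which is precisely what the pointwise assumption \eqref{eq:reff-strong} provides and what the annulus bound in \eqref{eq:zeta-def} does not. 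Your suggestion to ``transplant the base-point via mass-transport'' is too vague to bridge this: mass-transport lets you average a functional symmetrically in $(\rho,x)$, but the ball $\rball(\rho,R)$ is asymmetric in the two arguments, and there is no evident transport that converts the annulus resistance around $\rho$ into control of $g_R^G(\rho,x)$. The paper explicitly flags this obstruction (see the discussion following \eqref{eq:reff-strong} and \pref{rem:zeta0-again}).

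The paper's route is entirely different. It exploits the duality $\reff^G = (\Mod^G)^{-1}$ to extract, from the annulus resistance lower bound, an edge-length functional on each annulus that stretches it by a factor $R^{(d_f+\tilde{\zeta})/2}$ in $\ell^2(\con^G)$-normalized length. These are patched across scales and localities (via a random net whose law is root-independent, hence reversible) into a single reversible weight $\omega$ with $\E[\omega(X_0,X_1)^2]<\infty$ and $\dist_\omega^G(\rho,\crball(\rho,R)) \geq R^{(d_f+\tilde{\zeta}-\delta)/2}$ almost surely eventually. Then Markov type~$2$ of finite metric spaces (with constant $O(\log N)$), applied to the walk restricted to finite reversible clusters, gives $\E[\cM_n^{\omega,2} \mid (G,\rho,\omega)] \lessapprox n$, which pulls back to $\E[\cM_n^2 \mid (G,\rho)] \lessapprox n^{2/(d_f+\tilde{\zeta})}$.

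This also resolves your annealed-to-quenched difficulty cleanly: an annealed \emph{upper} bound on $\E[\cM_n^2]$ yields an almost sure upper bound on $\cM_n$ via Markov and Borel--Cantelli, which inverts to an almost sure \emph{lower} bound on $\sigma_R$. Your proposed route, an annealed lower bound on $\E[\sigma_R]$, does not upgrade to quenched by Borel--Cantelli, and the ergodic argument you sketch (averaging over $X_0,X_1,\ldots$) does not obviously control a single exit time from $\rho$. The paper makes exactly this point in \pref{sec:quenched}: the complementarity of $\beta$ and $d_w$ is what permits two-sided quenched bounds from one-sided annealed ones.
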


See \pref{cor:main} for further equalities involving annealed versions of $d_w$ and $\beta$.

\begin{remark}[The resistance exponents]
   \label{rem:resist}
   The resistance exponent is usually characterized heuristically as the value $\tilde{\zeta}$
   such
   \begin{equation}\label{eq:other-def}
      \reff^G\left(\rball(\rho,R) \leftrightarrow \crball(\rho,2R)\right) \diffeo R^{\tilde{\zeta}}.
   \end{equation}
   So the left-hand side of \eqref{eq:zeta-def} would naturally be replaced by
   \[
      \reff^G\left(\rball(\rho,R) \leftrightarrow \crball(\rho,2R)\right) \geq R^{\tilde{\zeta}-\delta}.
   \]
   The lower bound we require is substantially weaker, allowing one to consider spatial fluctuations of magnitude $R^{o(1)}$.
   The upper bound in \eqref{eq:zeta-def}, on the other hand, is somewhat stronger
   than \eqref{eq:other-def}, and encodes a level of spectral regularity.
   For instance, if $G$ satisfies an elliptic Harnack inequality and is ``strongly recurrent''
   in the sense of \cite[Def. 2.1]{Telcs06}, then
   \[
      \reff^G(\rball(\rho,R) \leftrightarrow \crball(\rho,2R)) \diffeo 
      \reff^G(\rho \leftrightarrow \crball(\rho,R)).
   \]
   See \cite[Thm. 4.6]{Telcs06} and \pref{thm:sr-compare}.
\end{remark}

\paragraph{Comparison to the strongly recurrent theory}
\label{sec:sr}
   Let us try to interpret the strongly recurrent theory (cf. Assumption 1.2 in \cite{KM08}) 
   in the setting of subpolynomial errors.  The resistance assumptions would take the form:
   For every $\delta > 0$, almost surely, for $R$ sufficiently large:
   \begin{gather}
      \max \left\{ \reff^G(\rho \leftrightarrow x) : x \in \rball(\rho,R) \right\} \leq R^{\zeta+\delta}, \label{eq:sr-ub} \\
      \reff^G\left(\rho \leftrightarrow \crball(\rho,R)\right) \geq R^{\zeta-\delta}. \label{eq:sr-lb}
   \end{gather}
   These assumptions imply that when $\zeta > 0$,
   it holds that $\tilde{\zeta}=\tilde{\zeta}_0=\zeta$; this is proved in \pref{thm:sr-compare}.
   Hence the theory we present (in the setting of unimodular random graphs)
   is more general, at least in terms of concluding the relations \eqref{eq:e1} and \eqref{eq:e2}.

   Under assumptions \eqref{eq:sr-ub} and \eqref{eq:sr-lb}, one can uniformly lower bound the Green kernel
   $\green_{\rball(\rho,R')}(\rho,x)$ (see \pref{sec:resistance} for definitions)
   for all points $x \in \rball(\rho,R)$ and some $R' \gg R$.
   In other words, every point in $\rball(\rho,R)$ is visited often on average before
   the random walk exits $\rball(\rho,R')$.  See, for instance, \cite[\S 3.2]{bck05}.
   This yields a subdiffusive estimate on the speed of the random walk,
   specifically an almost sure lower bound on $\E[\sigma_R \mid (G,\rho)]$.

   Instead of a pointwise bound, we use a lower bound on $\tilde{\zeta}$ to
   deform the graph metric $d^{G}$ (see the next section).
   The effective resistance across an annulus being large is equivalent
   to its discrete extremal length being large (see \pref{sec:modulus}).
   Thus in most scales and localities, we can extract a metric that locally ``stretches''
   the space.  By randomly covering the space with annuli at all scales,
   we obtain a ``quasisymmetric'' deformation (only in an asymptotic, statistical sense) that
   is bigger by a power than the graph metric.
   This argument is similar in spirit to one of Keith and Laakso \cite[Thm. 5.0.10]{KL04}
   which shows that the Assouad dimension of a metric measure space
   can be reduced through a quasisymmetric homeomorphism if the discrete modulus
   across annuli is large.
   
   Finally, by applying Markov type theory,
   we bound the speed of the walk in the stretched metric, which leads to
   a stronger bound in the graph metric.

\subsection{Upper and lower exponents}
\label{sec:sketch}

Even when scaling exponents do not exist, our arguments
give inequalities between various superior and inferior limits.
Given a sequence $\{\cE_n : n \geq 1\}$ of events on some probability space, let us
say that they occur {\em almost surely eventually (a.s.e.)} if
$\Pr[\# \{ n \geq 1 : \neg \cE_n\} < \infty] = 1$.

For a family $\{A_n\}$ of random variables,
we will define $\down{d}$ and $\up{d}$ to be the largest and smallest values,
respectively, such that for every $\delta > 0$,
almost surely eventually,
\[
   n^{\down{d}+\delta} \leq A_n \leq n^{\up{d}+\delta},
\]
where we allow the exponents to take values $\{-\infty,+\infty\}$ if no such number exists.
Note that $A_n \diffeo n^{d}$ (i.e., the exponent $d$ ``exists'') if and only if $\up{d}=\down{d}$.

Let us consider the corresponding extremal exponents
such that
for every $\delta > 0$ the following relations hold almost surely eventually
(with respect to $n,R \geq 1$):
\begin{align*}
   R^{\down{d}_f-\delta} &\leq \vol^G(\rho,R) \leq R^{\up{d}_f+\delta} \\
R^{\down{d}_w-\delta} &\leq \sigma_R \leq R^{\up{d}_w+\delta} \\
R^{\down{d}^{\cA}_w-\delta} &\leq \E[\sigma_R \mid (G,\rho)]  \leq R^{\up{d}^{\cA}_w+\delta} \\
n^{-\delta+1/\up{\beta}} &\leq \cM_n \leq n^{\delta+1/\down{\beta}} \\
n^{-\delta+2/\up{\beta}^{\cA}} &\leq \E[\cM_n^2 \mid (G,\rho)]\leq n^{\delta+2/\down{\beta}^{\cA}} \\
   n^{-\delta-\up{d}_s/2} &\leq p_{2n}^G(\rho,\rho) \leq n^{\delta-\down{d}_s/2},
\end{align*}
We will establish the following chains of inequalities, which together
prove \pref{thm:main}.

\begin{theorem}\label{thm:inequalities}
   Suppose that $(G,\rho)$ is a reversible random network satisfying $\E[1/c^G_{\rho}] < \infty$.
   Then it holds that
\begin{align}\label{eq:relation-mt}
   2\down{d}_f - \up{d}_f + \tilde{\zeta}\ &\leq\ \down{\beta}^{\cA} \\
                                          &\leq\ \down{\beta} \label{eq:bc1} \\
                                          &\leq\ \down{d}_w \wedge \up{\beta} \label{eq:relation-easy} \\
                                          &\leq\ \down{d}_w \vee \up{\beta} \nonumber \\
                                          &\leq \up{d}_w \label{eq:relation-easy2} \\
                                          &\leq\ \up{d}_w^{\cA}  \label{eq:bc2} \\
                                          &\leq\ \up{d}_f + \tilde{\zeta}_0, \label{eq:relation-commute}
\end{align}
and
\begin{equation}\label{eq:relation-ds}
   2 \left(1-\frac{\tilde{\zeta}_0}{\down{d}_w}\right) \leq \down{d}_s \leq\up{d}_s
                                                    \leq \frac{2 \up{d}_f}{\down{d}_w}.
\end{equation}
\end{theorem}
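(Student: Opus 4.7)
The plan is to decompose the chain into four groups: the bookkeeping inequalities \eqref{eq:bc1}, \eqref{eq:relation-easy}, \eqref{eq:relation-easy2}, \eqref{eq:bc2} between walk-speed exponents and their annealed analogues; the Green-function commute-time bound \eqref{eq:relation-commute}; the main novel Markov-type estimate \eqref{eq:relation-mt}; and the heat-kernel estimates \eqref{eq:relation-ds} for the spectral dimension.

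For the bookkeeping, the inequalities $\down{\beta} \leq \down{d}_w \wedge \up{\beta}$ and $\down{d}_w \vee \up{\beta} \leq \up{d}_w$ follow from the duality $\{\cM_n \geq R\} = \{\sigma_R \leq n\}$: an a.s.e.\ bound $\cM_n \leq n^{1/\down{\beta}+\delta}$ forces $\sigma_R > n$ whenever $R > n^{1/\down{\beta}+\delta}$, hence $\sigma_R \geq R^{\down{\beta} - O(\delta)}$ a.s.e., and the other three cases are analogous. The annealed--quenched comparisons $\down{\beta}^{\cA} \leq \down{\beta}$ and $\up{d}_w \leq \up{d}_w^{\cA}$ follow from Markov's inequality conditional on $(G,\rho)$, summed along a geometric subsequence, plus Borel--Cantelli and monotonicity of $\cM_n$ in $n$ and $\sigma_R$ in $R$. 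For \eqref{eq:relation-commute}, combine the identity $\E_\rho[\sigma_R] = \sum_y \green_{B^G(\rho,R)}(\rho,y)$ with $\green_{B^G(\rho,R)}(\rho, y) \leq c^G_y\, \reff^G(\rho \leftrightarrow \crball(\rho, R))$ (from reversibility and $\green_A(y, \rho) \leq \green_A(\rho, \rho) = c^G_\rho \reff^G(\rho \leftrightarrow A^c)$) to get
\[
   \E_\rho[\sigma_R] \leq \vol^G(\rho, R) \cdot \reff^G(\rho \leftrightarrow \crball(\rho, R));
\]
the volume bound $\vol^G(\rho, R) \leq R^{\up{d}_f + \delta}$ and the right half of \eqref{eq:zeta-def} then give \eqref{eq:relation-commute}.

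The substantive step is \eqref{eq:relation-mt}, which I would attack along the lines of the sketch in the introduction. Step (i): use the lower bound of \eqref{eq:zeta-def} and the duality between effective resistance and discrete extremal length (\pref{sec:modulus}) to extract, at every dyadic scale $R$, a non-negative vertex weight $w_R$ with small $\ell^2$-energy such that every path crossing the annulus accumulates $w_R$-mass at least $R^{\tilde{\zeta}-o(1)}$. Step (ii): randomly overlay annuli at all centers and dyadic scales and aggregate the resulting weights into a single map $\varphi: V(G) \to \ell^2$, using unimodularity and the Mass Transport Principle (\pref{sec:mtp}) to ensure that (a) the $\ell^2$-distortion across each graph edge is $O(1)$ on average, and (b) typical pairs at graph distance $R$ satisfy $\|\varphi(x) - \varphi(y)\|_2 \geq R^{\alpha - o(1)}$ with $2\alpha = 2\down{d}_f - \up{d}_f + \tilde{\zeta}$. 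Step (iii): apply Markov type $2$ of $\ell^2$ to the reversible chain $\{X_n\}$, yielding $\E\bigl[\max_{t \leq n}\|\varphi(X_0)-\varphi(X_t)\|_2^2\bigr] \leq O(n)$; inverting the stretch gives $\E[\cM_n^2] \leq n^{1/\alpha + o(1)}$ and hence $\down{\beta}^{\cA} \geq 2\alpha$. The asymmetric coefficient $2\down{d}_f - \up{d}_f$ arises in step (ii) from passing between the conductance-biased root distribution (on which the chain naturally lives) and the typical radial volume that feeds the stretch estimate. The main obstacle is precisely the construction of $\varphi$ in step (ii): the scale-wise weights must be glued so that both (a) and (b) hold a.s.e., compatibly with the Mass Transport Principle, and the choice of $\ell^2$ target is essential for summing scale-wise quadratic energies into a single form amenable to Markov type.

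For \eqref{eq:relation-ds}, the upper bound $\up{d}_s \leq 2\up{d}_f/\down{d}_w$ follows from the Cauchy--Schwarz lower bound $p_{2n}^G(\rho,\rho) \geq c^G_\rho\, (\Pr_\rho[X_n \in B^G(\rho,R)])^2 / \vol^G(\rho,R)$; choosing $R = n^{1/\down{d}_w + \delta}$ forces $\Pr_\rho[X_n \in B^G(\rho,R)] \geq 1/2$ a.s.e.\ (via the already-established $\sigma_R \geq R^{\down{d}_w - \delta}$), and $\vol^G(\rho, R) \leq R^{\up{d}_f + \delta}$ finishes. The lower bound $\down{d}_s \geq 2(1 - \tilde{\zeta}_0/\down{d}_w)$ follows from monotonicity of $p_{2k}^{B^G(\rho, R)}(\rho, \rho)$ in $k$ (for the reversible killed chain) and the Green-function total $\sum_k p_{2k}^{B^G(\rho, R)}(\rho, \rho) \leq c^G_\rho \reff^G(\rho \leftrightarrow \crball(\rho, R)) \leq c^G_\rho R^{\tilde{\zeta}_0 + \delta}$: choosing $R = (2n)^{1/(\down{d}_w - \delta)}$ ensures $\sigma_R > 2n$ a.s.e., so the killed walk agrees with $\{X_t\}$ through time $2n$, and monotonicity yields $n\, p_{2n}^G(\rho,\rho) \leq c^G_\rho R^{\tilde{\zeta}_0 + \delta}$ a.s.e.
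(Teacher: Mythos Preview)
Your treatment of \eqref{eq:bc1}--\eqref{eq:bc2}, \eqref{eq:relation-commute}, and both halves of \eqref{eq:relation-ds} is essentially the paper's (for $\down{d}_s$ the paper uses monotonicity of $p_{2n}^G(\rho,\rho)$ for the \emph{full} walk rather than the killed one, a cosmetic difference).

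For \eqref{eq:relation-mt} your broad strategy---extremal length across annuli, random overlay at all centers and scales with the MTP controlling the energy, then Markov type to bound the speed---is also the paper's, but the execution diverges at a load-bearing point. You propose to aggregate the annular data into a map $\varphi: V(G)\to\ell^2$ and apply Hilbert-space Markov type directly to the infinite chain. The paper instead builds a single \emph{edge weight} $\omega$ (\pref{thm:good-weight}) and works in the path metric $\dist_\omega^G$. Since a path metric is not a Hilbert metric, the paper cannot invoke Markov type globally; it restricts the walk to finite random clusters via a unimodular bond percolation adapted to the conductance measure (\pref{lem:bond}), embeds each finite cluster into $\ell^2$ by Bourgain's theorem with an $O((\log N)^2)$ loss that is absorbed by the subexponential-growth hypothesis, and couples the restricted walk back to the original (\pref{lem:nobias}, \pref{lem:trace}). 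Your direct-$\ell^2$ route would bypass all of that machinery, and it is viable in principle---one may take the harmonic potentials across the random annuli as $\ell^2$-coordinates, since they have the same Dirichlet energy $1/\reff$ and separate the inner and outer shells by $1$---but your sketch conflates the two pictures: ``every path crossing the annulus accumulates $w_R$-mass'' is edge-weight/path-metric language, and you never say how to pass from that to a genuine $\ell^2$-valued vertex map satisfying your pointwise lower bound (b). You would also need to justify \emph{maximal} Markov type for an infinite-state stationary reversible chain; this does hold for Hilbert targets, but it is not the textbook finite-state statement and should be argued.
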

To see that this yields \pref{thm:main},
simply note that when $\tilde{\zeta}=\tilde{\zeta}_0$ and $\down{d}_f=\up{d}_f$, then the upper and lower bounds in
\eqref{eq:relation-commute} and \eqref{eq:relation-mt} match,
and the upper and lower bounds in \eqref{eq:relation-ds}
are both equal to $2 d_f/d_w$ because the first set of inequalities implies $d_w=d_f+\tilde{\zeta}$.

\begin{remark}[Negative resistance exponent]
For $\tilde{\zeta} < 0$, \pref{thm:inequalities} yields (assuming $d_s, d_w$ exist):
\begin{gather*}
   d_w \geq d_f + \tilde{\zeta} \\
   2 \leq d_s \leq \frac{2 d_f}{d_f+\tilde{\zeta}}.
\end{gather*}
Without further assumptions, the final inequality cannot be made tight.
Indeed, for every $\e > 0$,
there are
unimodular random planar graphs of almost sure uniform polynomial growth and $\tilde{\zeta} \leq -1+\e$ \cite{EL20}.
Yet these graphs must satisfy $d_s \leq 2$ \cite{lee17a}.

In the general setting of Dirichlet forms on metric measure spaces,
the ``resistance conjecture'' \cite[pg. 1493]{GHL15} asserts conditions under which \eqref{eq:e1}--\eqref{eq:e2}
might hold even for $\tilde{\zeta} < 0$.
The primary additional condition is a Poincar\'e inequality with matching exponent.
In our setting, the existence of $d_f$ does not yield
the ``bounded covering'' property, that almost surely every ball $B^G(\rho,R)$
can be covered by $O(1)$ balls of radius $R/2$.
It seems likely that a variant of this condition should also be imposed
to recover \eqref{eq:e1}--\eqref{eq:e2}.
\end{remark}

Let us give a brief outline of how \pref{thm:inequalities} is proved.
The unlabeled inequality is trivial.
Both inequalities \eqref{eq:bc1} and \eqref{eq:bc2} are a straightforward consequence of Markov's
inequality and the Borel-Cantelli lemma.
The content of inequalities \eqref{eq:relation-easy} and \eqref{eq:relation-easy2}
lies in the relations $\down{\beta} \leq \down{d}_w$ and $\up{\beta} \leq \up{d}_w$.
These follow from the elementary inequality
\begin{equation}\label{eq:elem1}
   \cM_n \geq \1_{\{\sigma_R\leq n\}} R,
\end{equation}
which gives the implications
\begin{align*}
   \cM_n \leq n^{1/(\down{\beta}-\delta)}\spase &\implies \sigma_R \geq R^{\down{\beta}-\delta}\spase, \\
   \sigma_R \leq R^{\up{d}_w+\delta}\spase &\implies \cM_n \geq n^{1/(\up{d}_w+\delta)}\spase
\end{align*}

Since these hold for every $\delta > 0$, we conclude that
$\up{\beta} \leq \up{d}_w$ and
$\down{d}_w \geq \up{\beta}$, as desired.
Inequalities \eqref{eq:relation-commute} and \eqref{eq:relation-ds}
are proved in \pref{sec:resistance} using the standard relationships between effective resistance, the Green kernel, and return probabilities.
That leaves \eqref{eq:relation-mt}, which relies on Markov type theory, as we now explain.

\paragraph{Reversible random weights}
Consider a reversible random graph $(G,\rho)$ and random edge weights $\omega : E(G) \to \R_+$.
Denote by $\dist_{\omega}^G$ the $\omega$-weighted path metric in $G$.\footnote{Strictly speaking,
since we allow $\omega$ to take the value $0$, this is only a pseudometric,
but that will not present any difficulty.}
When $(G,\rho,\omega)$ is a reversible random network and $(G,\rho)$
is clear from context, we will say simply that the weight $\omega$ is {\em reversible.}

Let us remark that, when $(G,\rho)$ is a reversible random network, then
$(G,\rho,\omega)$ is also a reversible random network if $\omega$
is independent of $\rho$ conditioned on the isomorphism class of $G$.
Our constructions will have this property.
The next theorem (proved in \pref{sec:mt}) is a variant of the approach pursued in \cite{lee17a}.

\begin{theorem}\label{thm:mt}
   Suppose $(G,\rho,\omega)$ is a reversible random network with $\E[1/c^G_{\rho}] < \infty$ and such that
   almost surely
   \begin{equation}\label{eq:subexp}
      \lim_{R \to \infty} \frac{\log \vol^G(\rho,R)}{\log R} = 0.
   \end{equation}
   Suppose, moreover, that
   \begin{equation}\label{eq:omega-L2}
      \E\left[\omega(X_0,X_1)^2\right] < \infty,
   \end{equation}
   where $\{X_n\}$ is random walk on $G$ started from $X_0=\rho$.
   Then it holds that
   \begin{equation}\label{eq:as-diffusive}
      \E\left[\max_{0 \leq t \leq n} \dist_{\omega}^G(X_0,X_t)^2 \mid (G,\rho,\omega)\right] \lessapprox n.
   \end{equation}
\end{theorem}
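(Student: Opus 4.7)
The plan is to adapt the Markov type~$2$ framework of \cite{lee17a}---there applied to the graph metric $d^G$ on unimodular random graphs of subexponential growth---to the $\omega$-weighted path metric $\dist_\omega^G$, and then lift an annealed estimate to the quenched almost-sure statement via Borel--Cantelli. First I would establish the annealed bound
\begin{equation*}
   \E\!\left[\dist_\omega^G(X_0,X_n)^2\right] \ \leq\ n^{1+o(1)}\cdot \E\!\left[\omega(X_0,X_1)^2\right],
\end{equation*}
where $o(1)\to 0$ as $n\to\infty$. A trivial bound from the triangle inequality and Cauchy--Schwarz is quadratic in $n$; the improvement to near-linear uses reversibility of the walk together with the subexponential volume growth~\eqref{eq:subexp}. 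The argument proceeds by setting up a mass transport that, for each pair of times $(s,t)$, sends a unit of ``$\omega^2$-increment'' mass along the trajectory from $X_s$ to $X_t$, then applies the Mass-Transport Principle through the reversible-to-unimodular correspondence recalled in \pref{sec:stationary}; subexponential growth of $\vol^G(\rho,\cdot)$ forces the effective Markov type~$2$ constant at scale $n$ to be only $n^{o(1)}$, while the hypothesis $\E[\omega(X_0,X_1)^2]<\infty$ keeps every integral finite.

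Second, I would upgrade from endpoint to maximum via a Doob-type maximal inequality for reversible Markov chains in Markov type~$2$ metric spaces (of Naor--Peres--Schramm--Sheffield type), which costs only a constant factor and yields
\begin{equation*}
   \E\!\left[\max_{0\leq t\leq n}\dist_\omega^G(X_0,X_t)^2\right] \ \leq\ n^{1+o(1)}\,\E\!\left[\omega(X_0,X_1)^2\right].
\end{equation*}
To pass from this annealed bound to the quenched statement, let $A_n$ denote the quenched expectation in~\eqref{eq:as-diffusive}. The displayed inequality gives $\E[A_n]\leq n^{1+\delta/2}$ for every $\delta>0$ and all $n$ sufficiently large, so Markov's inequality implies $\Pr[A_n > n^{1+\delta}]\leq n^{-\delta/2}$. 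This is summable along $n=2^k$, and Borel--Cantelli together with monotonicity of $n\mapsto A_n$ then yields $A_n\leq n^{1+\delta}$ eventually almost surely. Since $\delta>0$ is arbitrary, $A_n \lessapprox n$.

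The main obstacle is the annealed Markov type step. Two features make it non-routine. First, the metric $\dist_\omega^G$ is itself a random object depending on the auxiliary mark $\omega$, so the mass transport has to be carried out in the joint space $(G,\rho,\omega)$; this is legitimate because the triple is itself a reversible random network, as noted in the paragraph preceding the theorem. Second, the growth hypothesis~\eqref{eq:subexp} is phrased using $c^G$-weighted balls in the \emph{unweighted} graph metric, whereas the displacement under control is measured in $\dist_\omega^G$; reconciling these requires the second-moment assumption on $\omega$ to control, in an averaged sense, the $\omega$-distortion accumulated along walk trajectories of length $n$, so that the effective Markov type constant can still be read off from the volume profile of~$G$.
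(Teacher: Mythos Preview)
Your overall architecture---obtain a Markov-type-$2$ bound with constant $n^{o(1)}$, upgrade to a maximum via NPSS, then pass from annealed to quenched by Borel--Cantelli---is reasonable, but the first step is where the actual content lies, and your sketch does not supply it. The claim that ``subexponential growth of $\vol^G(\rho,\cdot)$ forces the effective Markov type~$2$ constant at scale $n$ to be only $n^{o(1)}$'' via a mass transport of $\omega^2$-increments is not a known mechanism, and I do not see how to make it work: the Mass-Transport Principle equates expected mass in and out, but it does not by itself produce a metric inequality of the form $\E[\dist_\omega(X_0,X_n)^2]\lesssim n\cdot\E[\dist_\omega(X_0,X_1)^2]$. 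Markov type~$2$ is a statement about cancellation in a metric space, and for a general (infinite, random) pseudometric $\dist_\omega^G$ there is no reason the constant should be finite, let alone $n^{o(1)}$.

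The paper's proof supplies exactly the missing mechanism, and it is rather different from what you describe. The idea is to localize: construct a reversible finitary bond percolation $\xi$ (via a unimodular CKR-type random partition adapted to the conductance measure) so that the cluster $K_\xi(\rho)$ has diameter $\approx n$ and, by the subexponential growth hypothesis, cardinality $e^{o(n)}$. One then couples the walk on $G$ to the stationary restricted walk on the finite set $K_\xi(\rho)$ and applies the fact that \emph{every $N$-point metric space} has maximal Markov type~$2$ with constant $O(\log N)$ (Bourgain's embedding into Hilbert space plus the NPSS maximal inequality). This is where the volume growth enters: it bounds $\log N$, hence the Markov-type constant, by $n^{o(1)}$. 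The argument is already quenched (conditional on $(G,\rho,\omega,\xi)$), and Borel--Cantelli is used only to control the auxiliary randomness (the percolation, the coupling time, and the conditional second moment of $\omega$). Your annealed-to-quenched reduction would also face the difficulty that the almost-sure growth assumption gives no uniform bound on the Markov-type constant across realizations, so $\E[A_n]$ need not be finite; the paper sidesteps this by never taking an unconditional expectation over $(G,\rho,\omega)$.
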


\begin{figure}[h]
      \centering
      \subfigure[Stretching an annulus\label{fig:stretch}]{\includegraphics[width=6cm]{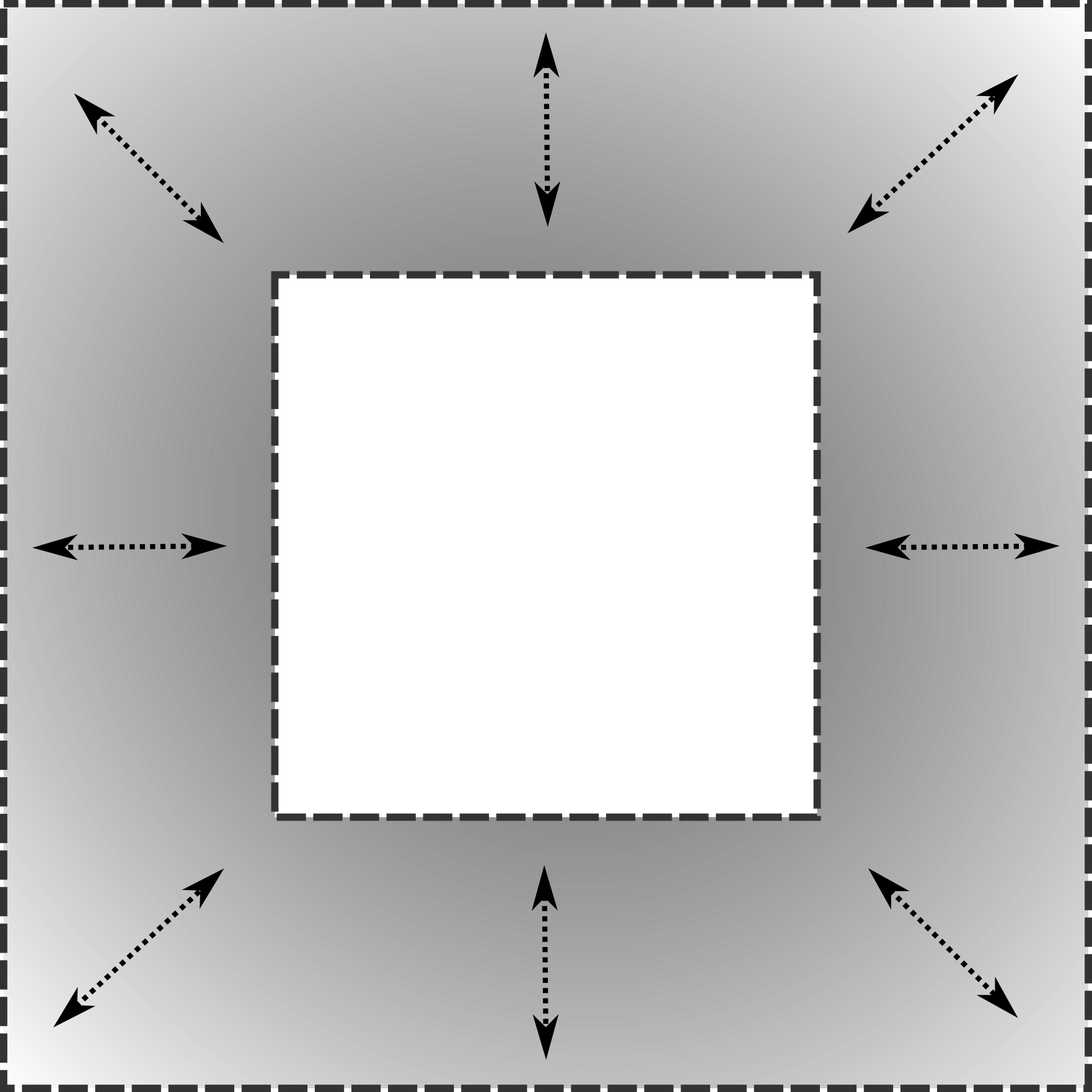}}
         \hspace{0.8in}
         \subfigure[Tiling by annuli\label{fig:tile}]{
   \def\svgwidth{6cm}
%
\begingroup%
  \makeatletter%
  \providecommand\color[2][]{%
    \errmessage{(Inkscape) Color is used for the text in Inkscape, but the package 'color.sty' is not loaded}%
    \renewcommand\color[2][]{}%
  }%
  \providecommand\transparent[1]{%
    \errmessage{(Inkscape) Transparency is used (non-zero) for the text in Inkscape, but the package 'transparent.sty' is not loaded}%
    \renewcommand\transparent[1]{}%
  }%
  \providecommand\rotatebox[2]{#2}%
  \newcommand*\fsize{\dimexpr\f@size pt\relax}%
  \newcommand*\lineheight[1]{\fontsize{\fsize}{#1\fsize}\selectfont}%
  \ifx\svgwidth\undefined%
    \setlength{\unitlength}{316.70078747bp}%
    \ifx\svgscale\undefined%
      \relax%
    \else%
      \setlength{\unitlength}{\unitlength * \real{\svgscale}}%
    \fi%
  \else%
    \setlength{\unitlength}{\svgwidth}%
  \fi%
  \global\let\svgwidth\undefined%
  \global\let\svgscale\undefined%
  \makeatother%
  \begin{picture}(1,1)%
    \lineheight{1}%
    \setlength\tabcolsep{0pt}%
    \put(0,0){\includegraphics[width=\unitlength,page=1]{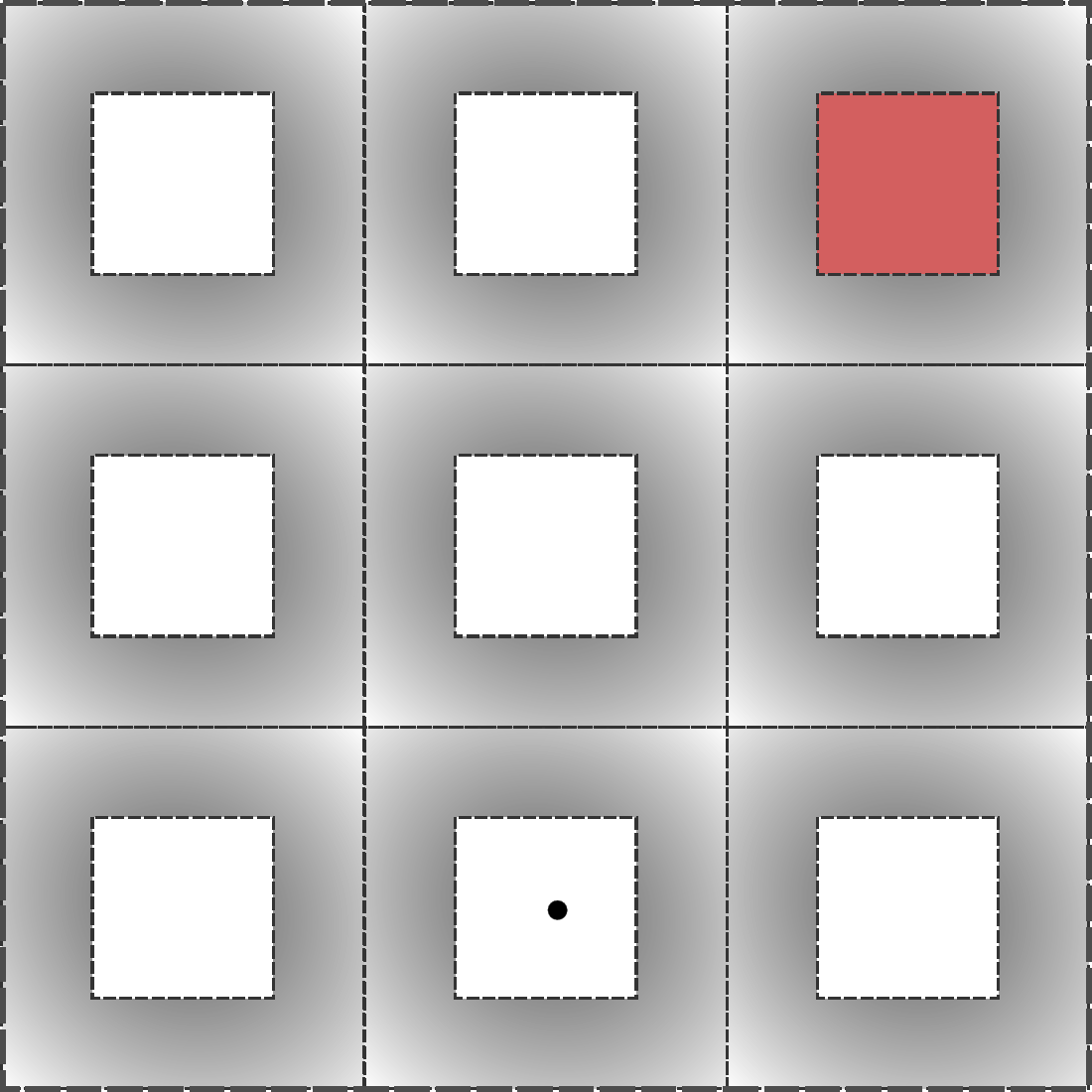}}%
    \put(0.83003842,0.83576267){\color[rgb]{0,0,0}\makebox(0,0)[lt]{\lineheight{1.25}\smash{\begin{tabular}[t]{l}$x$\end{tabular}}}}%
    \put(0.4571141,0.11781708){\color[rgb]{0,0,0}\makebox(0,0)[lt]{\lineheight{1.25}\smash{\begin{tabular}[t]{l}$y$\end{tabular}}}}%
    \put(0,0){\includegraphics[width=\unitlength,page=2]{drawing.pdf}}%
  \end{picture}%
\endgroup%

}
         \caption{Streching the graph at a fixed scale}
\end{figure}

Given this theorem, let us now sketch the proof of \eqref{eq:relation-mt}.
Consider a graph annulus
\[\cA \seteq \{ x \in V(G) : R \leq  d^{G}(\rho,x) \leq R^{1+\delta}\}.\]
If the effective resistance across $\cA$ is at least $R^{\tilde{\zeta}}$, then by the
duality between effective resistance and discrete extremal length (see \pref{sec:modulus}),
there is a length functional $L : E(G[\cA]) \to \R_+$ satisfying
\begin{gather*}
   \sum_{\{x,y\} \in E(G[\cA])} \con^G(\{x,y\}) L(x,y)^2 \leq R^{-\tilde{\zeta}} \\
   \dist_{L}^{G[\cA]}\left(\rball(x,R), \crball(x,R^{1+\delta})\right) \geq 1,
\end{gather*}
where $G[\cA]$ is the subgraph induced on $\cA$.

Let us suppose that the total volume in $\cA$ satisfies
\[
   V_{\cA} \seteq \sum_{e \in E(G[\cA])} \con^G(e) \approx R^{d_f},
\]
and we normalize $L$ to have expectation squared $\leq 1$ under the measure $\con^G(\{x,y\})/V_{\cA}$ on $E(G[\cA])$:
\[
   \hat{L} \seteq R^{(\tilde{\zeta}+d_f)/2} L \approx \left(\frac{R^{\tilde{\zeta}}}{V_{\cA}}\right)^{1/2} L.
\]
This yields:
\[
   \dist_{\hat{L}}^{G[\cA]}\left(\rball(x,R), \crball(x,R^{1+\delta})\right) \geq R^{(\tilde{\zeta}+d_f)/2},
\]
meaning that, with normalized unit area, $\hat{L}$ ``stretches'' the graph annulus by a positive power when $\tilde{\zeta}+d_f > 2$
(see \pref{fig:stretch}).

If $G$ is sufficiently regular (e.g., a lattice), then we could tile annuli at this scale (as in \pref{fig:tile})
so that if we define $\omega_R$ as the sum of the length functionals over the tiled annuli,
then for any pair $x,y \in V(G)$ with $d^{G}(x,y) \geq R^{1+\delta}$
and at least one of $x$ or $y$ in the center of an annulus,
we would have $\dist_{\omega_R}^G(x,y) \geq R^{(\tilde{\zeta}+d_f)/2}$.
In a finite-dimensional lattice, a bounded number of shifts of the tiling
is sufficient for every vertex to reside in the center of some annulus.

\smallskip

By combining length functionals over all scales, and replacing the regular tiling by a suitable random family of annuli,
we obtain, for every $\delta > 0$, a reversible random weight $\omega : E(G) \to \R_+$ satisfying \eqref{eq:omega-L2} (intuitively,
because of the unit area normalization), and such that almost surely eventually
\begin{equation}\label{eq:distpower}
   \dist_{\omega}^G\left(\rho, \crball(\rho,R)\right) \geq R^{(d-\delta)/2},
\end{equation}
where $d \seteq d_f + \tilde{\zeta}$.
In other words, distances in $\dist_{\omega}^G$ are (asymptotically)
increased by power $(d-\delta)/2$.

Thus \eqref{eq:as-diffusive} gives for every $\delta > 0$,
eventually almost surely
\[
   \E\left[\cM_n^2 \mid (G,\rho)\right] \leq n^{2(1+\delta)/(d-\delta)}.
\]
Taking $\delta \to 0$ yields $\down{\beta}^{\cA} \geq d$.
This is carried out formally in \pref{sec:beta}.

\subsubsection{Annealed vs. quenched subdiffusivity}
   \label{sec:quenched}

   One can express $\E[\sigma_R \mid (G,\rho)]$
   in terms of electrical potentials.
   Doing so, it is natural to arive at two-sided annealed estimates:
   \[
      R^{d-o(1)} \leq \E[\sigma_R] \leq R^{d+o(1)} \quad \textrm{as } R \to \infty,
   \]
   where expectation is taken over both the walk and the random network $(G,\rho)$.
   Then a standard application of Borel-Cantelli gives that
   almost surely $\sigma_R \leq R^{d+o(1)}$, but not an almost sure lower bound.
   On the other hand,
   \[
      \E[\cM_n^2] \leq n^{2/d+o(1)}\quad \textrm{as } n \to \infty
   \]
   provides that $\cM_n \leq n^{1/d+o(1)}$ almost surely, which entails $\sigma_R \geq R^{d-o(1)}$ almost surely.

   In this way, the two exponents $\beta$ and $d_w$ are complementary, allowing one
   to obtain two-sided quenched estimates from two-sided annealed estimates.
   This is crucial for establishing $d_s = 2 d_f/d_w$, as the upper bound in
   \eqref{eq:relation-ds} uses the fully quenched exponent $\down{d}_w$
   which, in the setting of \pref{thm:main}, arises from the lower bound \eqref{eq:relation-mt}
   on the (partially) {\em annealed} exponent $\down{\beta}^{\cA}$.
   We remark on the following strengthening of \pref{thm:main}.

   \begin{corollary}\label{cor:main}
      Under the assumptions of \pref{thm:main}, it additionally holds that
      $\beta = \beta^{\cA}$ and $d_w = d_w^{\cA}$.
   \end{corollary}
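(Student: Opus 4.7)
The plan is to reduce \pref{cor:main} to two residual bounds on extremal exponents that did not explicitly appear in \pref{thm:inequalities}. Set $d \seteq d_f + \tilde\zeta$. Under the hypotheses of \pref{thm:main} we have $\down{d}_f = \up{d}_f = d_f$ and $\tilde\zeta = \tilde\zeta_0$, so both ends of the chain in \pref{thm:inequalities} equal $d$, and hence every intermediate quantity equals $d$ as well. In particular,
\[
   \down{\beta}^{\cA} = \down{\beta} = \up{\beta} = \down{d}_w = \up{d}_w = \up{d}_w^{\cA} = d,
\]
where $\up{\beta} = \down{d}_w = d$ follows because both $\down{d}_w \wedge \up{\beta}$ and $\down{d}_w \vee \up{\beta}$ equal $d$. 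Combined with the trivial bounds $\down{\beta}^{\cA} \leq \up{\beta}^{\cA}$ and $\down{d}_w^{\cA} \leq \up{d}_w^{\cA}$, the corollary reduces to two matching estimates on the remaining annealed exponents: $\up{\beta}^{\cA} \leq d$ and $\down{d}_w^{\cA} \geq d$.

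Both will come from the tactic used for \eqref{eq:relation-easy}--\eqref{eq:relation-easy2}, enhanced by a conditional Markov step to deal with the annealed expectation. The inputs are the two elementary sandwich inequalities
\[
   \cM_n \geq R\cdot \1_{\{\sigma_R \leq n\}}, \qquad \sigma_R \geq n\cdot \1_{\{\cM_n < R\}}.
\]
For $\up{\beta}^{\cA} \leq d$, I would start from the a.s.e.\ bound $\E[\sigma_R \mid (G,\rho)] \leq R^{d+\delta}$ (this is just $\up{d}_w^{\cA} \leq d$). Conditional Markov yields $\Pr[\sigma_R \leq 2 R^{d+\delta} \mid (G,\rho)] \geq 1/2$, and setting $n \seteq \lceil 2 R^{d+\delta} \rceil$, the first sandwich gives
\[
   \E[\cM_n^2 \mid (G,\rho)] \geq R^2/2 \geq n^{2/(d+\delta)}/C,
\]
so $\up{\beta}^{\cA} \leq d+\delta$, and letting $\delta \to 0$ closes the first gap. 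The direction $\down{d}_w^{\cA} \geq d$ is symmetric: from $\down{\beta}^{\cA} \geq d$, conditional Markov applied to $\cM_n^2$ gives $\Pr[\cM_n < 2 n^{1/(d-\delta)} \mid (G,\rho)] \geq 3/4$, then the second sandwich with $R \seteq \lceil 2 n^{1/(d-\delta)}\rceil$ yields $\E[\sigma_R \mid (G,\rho)] \geq 3n/4 \geq R^{d-\delta}/C'$.

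I do not expect a real obstacle: this is essentially a two-sided annealed analogue of the reasoning already behind \eqref{eq:relation-easy}--\eqref{eq:relation-easy2}. The only mildly delicate point is the passage from an a.s.e.\ bound indexed along the coupled subsequence $n \approx R^{d\pm\delta}$ (or $R \approx n^{1/(d\mp\delta)}$) to an a.s.e.\ bound for every integer of the driving variable; this is routine using monotonicity of $n \mapsto \cM_n$ and $R \mapsto \sigma_R$, since consecutive elements of the subsequence differ by at most a subpolynomial factor that can be absorbed into $\delta$.
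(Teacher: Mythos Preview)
Your proposal is correct and arrives at the same two residual inequalities $\up{\beta}^{\cA} \leq d$ and $\down{d}_w^{\cA} \geq d$ as the paper, both ultimately coming from \eqref{eq:elem1} (equivalently, the identity $\{\sigma_R > n\} = \{\cM_n < R\}$). The only difference is in bookkeeping: the paper bridges through the \emph{quenched} exponents, invoking $\up{\beta}^{\cA} \leq \up{d}_w$ and $\down{\beta} \leq \down{d}_w^{\cA}$, which follow because an almost-sure eventual lower bound on $\cM_n$ (resp.\ $\sigma_R$) passes to the conditional expectation without an explicit Markov step. You instead stay on the annealed side throughout and insert a conditional Markov inequality to convert an upper bound on $\E[\sigma_R \mid (G,\rho)]$ into a lower bound on $\Pr[\sigma_R \leq n \mid (G,\rho)]$ (and symmetrically for $\cM_n^2$). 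Both routes are equally short; the paper's is marginally slicker in that it reuses the already-established quenched equalities $\up{d}_w = \down{\beta} = d$ rather than introducing the Markov step, while yours is more self-contained and makes the probabilistic content explicit.
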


   \begin{proof}
   We may assume that $d_w$ and $\beta$ exist, and $d_w=\beta$.
   From \pref{thm:inequalities} we obtain:
   \[
      \down{\beta}^{\cA} = \beta = \up{d}_w^{\cA}.
   \]
   The relations $\down{\beta}\leq \down{d}_w^{\cA}$ and $\up{\beta}^{\cA} \leq \up{d}_w$
   follow from \eqref{eq:elem1}, yielding
   \begin{align*}
      \beta &\geq \up{\beta}^{\cA} \geq \down{\beta}^{\cA} = \beta, \\
      \beta &\leq \down{d}_w^{\cA} \leq \up{d}_w^{\cA} = \beta.\qedhere
   \end{align*}
   \end{proof}

\section{Reversible random weights}

Throughout this section, $(G,\rho)$ is a reversible random network satisfying $\E[1/\con^G_{\rho}] < \infty$.

\subsection{Modulus and effective resistance}
\label{sec:modulus}

For a network $H$ and two disjoint subsets $S, T \subseteq V(H)$, define the {\em modulus}
\begin{equation}\label{eq:Hmodulus}
   \Mod^H(S\leftrightarrow T) \seteq \min \left\{ \|\omega\|_{\ell^2(\con^H)}^2 : \dist^H_{\omega}(S,T) \geq 1 \right\},
\end{equation}
where the minimum is over all weights $\omega : E(H) \to \R_+$, and
\[
   \|\omega\|^2_{\ell^2(\con^H)} = \sum_{e \in E(H)} \con^H(e) |\omega(e)|^2.
\]
For $x \in V(H)$ and $0 < r < R$, define the annular modulus:
\[
   \amod^{H}(x,r,R) \seteq \Mod^{H}\left(B^H(x,r) \leftrightarrow \bar{B}^H(x,R)\right).
\]

Note that when $H$ is finite, the minimizer in \eqref{eq:Hmodulus} exists and is unique (as it is the
minimum of a strictly convex function over a compact set).
In particular, even when $H$ is infinite, this also holds for $M^H(x,r,R)$, as we have
\[
\Mod^H\left(B^H(x,r) \lra \bar{B}^H(x,R)\right) = \Mod^{H[B^H(x,R+1)]}\left(B^H(x,r) \lra \bar{B}^H(x,R)\right).
\]
Denote this minimal weight by $\omega_{(H,x,r,R)}$.
The standard duality between effective resistance and discrete extremal length \cite{duffin62}
gives an alternate characterization of $\amod^H(x,r,R)$, as follows.

\begin{lemma}\label{lem:reff-duality}
   For any finite graph $H$ and disjoint subsets $S,T \subseteq V(H)$, it holds that
   \begin{equation}
      \Mod^H(S \leftrightarrow T) 
      = \left(\reff^H(S \leftrightarrow T)\right)^{-1}.
   \end{equation}
   Hence for any (possibly infinite graph) $G$,
   all $x \in V(G)$ and $0 \leq r \leq R$,
   \[
      \amod^H(x,r,R) = \left(\reff^{G}\left(\rball(x,r) \leftrightarrow \crball(x,R)\right)\right)^{-1}.
   \]
\end{lemma}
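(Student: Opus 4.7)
The plan is to establish the finite-graph identity by matching upper and lower bounds, both tethered to the standard Dirichlet variational formula $\reff^H(S \leftrightarrow T)^{-1} = \min\{\cE(f) : f|_S = 0,\ f|_T = 1\}$, where $\cE(f) \seteq \sum_{\{u,v\} \in E(H)} \con^H(\{u,v\})(f(u)-f(v))^2$. Once the finite case is in hand, the annular statement is extracted by restricting to the finite ball $B^G(x,R+1)$ and arguing that neither side sees edges outside it.

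First I would prove $\Mod^H(S \leftrightarrow T) \leq \reff^H(S \leftrightarrow T)^{-1}$. Take the minimizer $f$ of $\cE$ above (the harmonic extension with boundary values $0$ on $S$ and $1$ on $T$) and define the candidate weight $\omega(\{u,v\}) \seteq |f(u)-f(v)|$. For any path $u_0, u_1, \ldots, u_k$ with $u_0 \in S$ and $u_k \in T$, telescoping gives
\[
   \sum_{i=1}^{k} \omega(\{u_{i-1},u_i\}) \geq |f(u_k)-f(u_0)| = 1,
\]
so $\dist^H_\omega(S,T) \geq 1$ and $\omega$ is feasible. Since $\|\omega\|_{\ell^2(\con^H)}^2 = \cE(f) = \reff^H(S\leftrightarrow T)^{-1}$, the upper bound follows.

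For the reverse inequality $\Mod^H(S \leftrightarrow T) \geq \reff^H(S\leftrightarrow T)^{-1}$, let $\omega$ be the (unique) optimal weight and define $f(v) \seteq \min\{1,\dist^H_\omega(S,v)\}$. Feasibility of $\omega$ gives $f|_S = 0$ and $f|_T = 1$, and the triangle inequality for $\dist^H_\omega$ gives $|f(u)-f(v)| \leq \omega(\{u,v\})$ on every edge. Hence
\[
   \reff^H(S \leftrightarrow T)^{-1} \leq \cE(f) \leq \sum_{\{u,v\} \in E(H)} \con^H(\{u,v\})\,\omega(\{u,v\})^2 = \Mod^H(S \leftrightarrow T).
\]
The main thing to verify carefully here is that $\omega$ being only a pseudo-metric (it may vanish on some edges) causes no trouble: $f$ is still a well-defined $[0,1]$-valued voltage, which is all the Dirichlet principle requires.

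Finally, for the annular version on a possibly infinite $G$, the key observation is that any path from $\rball(x,r)$ to $\crball(x,R)$ crosses the boundary of $\rball(x,R)$, so its critical portion is an edge with both endpoints in $\rball(x,R+1)$; dually, contracting $\crball(x,R)$ to a single terminal identifies both $\Mod^G(\rball(x,r)\leftrightarrow \crball(x,R))$ and $\reff^G(\rball(x,r)\leftrightarrow \crball(x,R))$ with the corresponding quantities on the finite subnetwork $G[\rball(x,R+1)]$. Applying the finite-graph duality already proved then gives the desired equality. I expect the only mildly delicate step to be this restriction to a finite subgraph in the infinite setting, which requires checking both that the modulus minimizer may be taken supported on $E(G[\rball(x,R+1)])$ and that effective resistance is unchanged after contracting the exterior; both are standard consequences of the cut/path and Thomson/Dirichlet characterizations.
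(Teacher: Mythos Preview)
Your argument is correct and is the standard proof of this duality (the ``discrete extremal length'' characterization of effective conductance). The paper does not actually prove \pref{lem:reff-duality}; it cites it as the standard duality due to Duffin, and the reduction of the annular case to a finite subgraph is already recorded just before the lemma via the identity $\Mod^H(B^H(x,r)\lra\bar{B}^H(x,R))=\Mod^{H[B^H(x,R+1)]}(B^H(x,r)\lra\bar{B}^H(x,R))$. So there is nothing to compare against: you have supplied a clean self-contained proof where the paper defers to the literature.
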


For a function $g : V(H) \to \R$, we denote the Dirichlet energy
\[
   \energy^H(g) \seteq \sum_{\{x,y\} \in E(H)} \con^H(\{x,y\}) |g(x)-g(y)|^2.
\]
We will make use of the Dirichlet principle (see \cite[Ch. 2]{lp:book}):
When $H$ is finite and $S \cap T = \emptyset$,
   \begin{equation}\label{eq:dirichlet}
      \reff^H(S \leftrightarrow T) = \left(\min \left\{ \energy^H(g) : g|_S \equiv 0, g|_T \equiv 1 \right\}\right)^{-1},
   \end{equation}
and when $H$ is additionally connected, the minimizer of \eqref{eq:dirichlet} is the unique function harmonic on $V(H) \setminus (S \cup T)$
with the given boundary values.

\subsection{Approximate nets}

Fix $R' \geq R \geq 1$ and $\lambda \geq 1$.
For an edge $e \in E(G)$, define
\[
   \gamma_{R,R'}(e) \seteq \max \left\{ \vol^G(y,R) : d^{G}(e,y) \leq 2R' \right\}.
\]
Let $\{\bm{u}_e \in \{0,1\} : e \in E(G)\}$ be an independent family of Bernoulli random variables where
\[
   \Pr\left(\bm{u}_e = 1\right) = \min\left(1,\lambda \frac{\con^G(e)}{\gamma_{R,R'}(e)}\right).
\]
Define $\bm{U}_{R,R'}(\lambda) \seteq \left\{ x \in V(G) : x \in e \textrm{ for some } \bm{u}_e = 1 \right\}$.
Observe the inequalities, valid for every $x \in V(G)$ and $1 \leq r \leq R$:
\begin{align}
   \label{eq:samp1} \Pr[x \in \bm{U}_{R,R'}(\lambda)] &\leq 
   \sum_{y : \{x,y\} \in E(G)} \con^G(\{x,y\}) \gamma_{R,R'}(\{x,y\}) \nonumber \\ &\leq
    \frac{\lambda\con^G_x}{\max \{ \vol^G(y, R) : y \in \rball(x,R') \}}, \\
   \label{eq:samp2} \Pr[d^{G}(x, \bm{U}_{R,R'}(\lambda)) > r] &\leq \prod_{e \in E^G(\rball(x,r))} \left(1-\frac{\lambda \con^G(e)}{\gamma_{R,R'}(e)}\right)_+ 
\leq \exp\left(-\lambda \frac{\vol^G(x,r)}{\vol^G(x,3R')}\right),\vphantom{\frac{\bigoplus}{\bigoplus}}
   \end{align}
   where we use $E^G(B^G(x,R))$ to denote the set of edges in $G$ incident
   to at least one vertex of $B^G(x,R)$.

   \smallskip

   The idea here is that, by \eqref{eq:samp2}, the balls $\{ \rball(u,R) : u \in \bm{U}_{R,R'}(\lambda) \}$ tend to cover
   vertices $x \in V(G)$ for which
   $\vol^G(x,R) \approx \vol^G(x,3R')$, as long as $\lambda$ is chosen sufficiently large.
   On the other hand, \eqref{eq:samp1} will allow us to bound
   $\E |\rball(\rho,2R') \cap \bm{U}_{R,R'}(\lambda)|$.
   Referring to the argument sketched at the end of \pref{sec:sketch},
   we will center an annulus at every $x \in \bm{U}_{R,R'}(\lambda)$, and thus we need
   to control the average covering multiplicity to keep $\E[\omega(X_0,X_1)^2]$ finite.

   Since the law of $\bm{U}_R(\lambda)$ does not depend on the root, we have the following.

\begin{lemma}
   The triple $(G,\rho,\bm{U}_R(\lambda))$ is a reversible random network.
\end{lemma}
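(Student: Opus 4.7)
The plan is to reduce reversibility of $(G,\rho,\bm{U}_{R,R'}(\lambda))$ to that of $(G,\rho)$ by exploiting the fact that the construction of $\bm{U}_{R,R'}(\lambda)$ is a root-oblivious randomization on top of the network. The definition of $\bm{U}_{R,R'}(\lambda)$ uses only: (i) the conductances $\con^G$, (ii) the function $\gamma_{R,R'}$, which is defined purely in terms of distances and volumes in $G$, and (iii) an independent Bernoulli family $\{\bm{u}_e\}_{e \in E(G)}$ whose success probabilities depend only on $\con^G(e)$ and $\gamma_{R,R'}(e)$. None of these ingredients sees the root.

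Concretely, I would proceed as follows. First, observe that $\gamma_{R,R'}$ is a network isomorphism invariant: any rooted isomorphism $\phi \from G \to G'$ satisfies $\gamma_{R,R'}^{G'}(\phi(e)) = \gamma_{R,R'}^G(e)$, since it is defined via $\vol^G$ and $d^G$, which are isomorphism invariant. Hence one can view the law of $\bm{U}_{R,R'}(\lambda)$ as a measurable kernel $\kappa_{R,R'}^{\lambda}(G,\,\cdot\,)$ from isomorphism classes of networks to subsets of $V(G)$, depending only on $G$ and independent auxiliary randomness. Second, the passage from $(G,\rho,\xi)$ to $(G,\rho,(\xi,\bm{U}_{R,R'}(\lambda)))$ amounts to augmenting the mark space by a coordinate sampled according to $\kappa_{R,R'}^{\lambda}(G,\cdot)$, independently of everything else conditional on $G$.

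To verify the two defining conditions of a reversible random network: the conductance bound $\E[1/c^G_\rho] < \infty$ is inherited unchanged. For the step-exchange property, note that by reversibility of $(G,\rho)$ we already have $(G,X_0,X_1,\con^G) \stackrel{d}{=} (G,X_1,X_0,\con^G)$. Couple the two sides by using a single realization of $\bm{U}_{R,R'}(\lambda)$ drawn from $\kappa_{R,R'}^{\lambda}(G,\cdot)$ conditional on the shared network $G$. Since $\bm{U}_{R,R'}(\lambda)$ is, conditionally on $G$, independent of $(X_0,X_1)$, appending it as a mark preserves the equality in law:
\[
   (G,X_0,X_1,\con^G,\bm{U}_{R,R'}(\lambda)) \stackrel{d}{=} (G,X_1,X_0,\con^G,\bm{U}_{R,R'}(\lambda)).
\]
This is exactly the reversibility of $(G,\rho,\bm{U}_{R,R'}(\lambda))$.

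There is no serious obstacle here; the only thing that requires a moment of care is making explicit that $\gamma_{R,R'}$ and the Bernoulli probabilities are isomorphism invariants and in particular do not reference a distinguished root, so that the conditional-on-$G$ law of the mark really is well defined and can be coupled identically for the pair $(X_0,X_1)$ and $(X_1,X_0)$. Once this is noted, the argument is a one-line application of the reversibility of $(G,\rho)$.
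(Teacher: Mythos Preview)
Your proposal is correct and follows exactly the paper's approach. The paper's entire justification is the sentence preceding the lemma, ``Since the law of $\bm{U}_R(\lambda)$ does not depend on the root,'' together with the general remark in \pref{sec:sketch} that $(G,\rho,\omega)$ is reversible whenever $\omega$ is independent of $\rho$ conditioned on the isomorphism class of $G$; you have simply spelled out these two points in detail.
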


\subsection{The Mass-Transport Principle}
\label{sec:mtp}

Let $\rgraphs$ denote the collection of isomorphism classes
of rooted, connected, locally-finite networks, and let $\rrgraphs$
denote the collection of isomorphism classes of doubly-rooted, connected, locally-finite networks.
We will consider functionals $F : \rrgraphs \to [0,\infty)$.
Equivalently, these are functionals $F(G_0,x_0,y_0,\xi_0)$ that are invariant
under automorphisms of $\psi$ of $G_0$:
$F(G_0,x_0,y_0,\xi_0)=F(\psi(G_0),\psi(x_0),\psi(y_0),\xi_0 \circ \psi^{-1})$.

The {\em mass-transport principle (MTP)} for a random rooted network $(G,\rho,\xi)$ asserts
that for any nonnegative Borel $F : \rrgraphs \to [0,\infty)$, it holds that
\begin{equation*}
   \E\left[\sum_{x \in V(G)} F(G,\rho,x,\xi)\right]
   =
   \E\left[\sum_{x \in V(G)} F(G,x,\rho,\xi)\right].
\end{equation*}
Unimodular random networks are precisely those that satisfy the MTP (see \cite{aldous-lyons}).

Using the fact that biasing the law of a reversible random network $(G,\rho,\xi)$ with $\E[1/\con^G_{\rho}] < \infty$
by $1/\con^G_{\rho}$ (see \cite[Prop. 2.5]{bc12}) yields a unimodular random network,
one arrives at the following biased MTP.

\begin{lemma}\label{lem:mtp}
   If $(G,\rho,\xi)$ is a reversible random network with $\E[1/\con^G_{\rho}] < \infty$, then
   for any nonnegative Borel functional $F : \rrgraphs \to [0,\infty)$, it holds that
\begin{equation}\label{eq:masstp}
   \E\left[\frac{1}{\con^G_{\rho}} \sum_{x \in V(G)} F(G,\rho,x,\xi)\right]
   =
   \E\left[\frac{1}{\con^G_{\rho}} \sum_{x \in V(G)} F(G,x,\rho,\xi)\right].
\end{equation}
\end{lemma}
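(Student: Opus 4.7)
The plan is to reduce the identity directly to the standard mass-transport principle for unimodular random networks, using the $1/\con^G_\rho$-biasing correspondence recalled just above the lemma (and proved in \cite[Prop.\ 2.5]{bc12}). Let $\mu$ denote the law of $(G,\rho,\xi)$ on $\rgraphs$ and set $Z \seteq \E[1/\con^G_\rho]$, which is finite by hypothesis. First I would introduce the probability measure $\tilde\mu$ defined by the Radon--Nikodym derivative
$$\frac{d\tilde\mu}{d\mu}(G_0,\rho_0,\xi_0) = \frac{1}{Z\,\con^{G_0}_{\rho_0}},$$
and appeal to the aforementioned correspondence to conclude that $\tilde\mu$ is the law of some unimodular random network $(\tilde G,\tilde\rho,\tilde\xi)$; this is exactly the (inverse) change of law displayed in Section \ref{sec:stationary}, once one identifies $\E[\con^{\tilde G}_{\tilde\rho}] = 1/Z$, which is automatic from the normalization of $\tilde\mu$.

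Next I would apply the standard unimodular mass-transport principle to $\tilde\mu$: for any nonnegative Borel $F\colon \rrgraphs \to [0,\infty)$,
$$\E_{\tilde\mu}\left[\sum_{x \in V(G)} F(G,\rho,x,\xi)\right] = \E_{\tilde\mu}\left[\sum_{x \in V(G)} F(G,x,\rho,\xi)\right].$$
The final step is to unfold each $\tilde\mu$-expectation through the Radon--Nikodym derivative. Since the weight $1/\con^G_\rho$ depends only on the first root, it factors through the inner sum on each side, and one obtains
$$\frac{1}{Z}\,\E\left[\frac{1}{\con^G_\rho}\sum_{x \in V(G)} F(G,\rho,x,\xi)\right] = \frac{1}{Z}\,\E\left[\frac{1}{\con^G_\rho}\sum_{x \in V(G)} F(G,x,\rho,\xi)\right],$$
and multiplying by $Z$ yields \eqref{eq:masstp}.

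There is no real obstacle to clear, since the nontrivial content — that biasing by $1/\con^G_\rho$ converts a reversible law into a unimodular one — is imported verbatim from \cite{bc12}. The only point that needs to be noticed is the asymmetric placement of the $1/\con^G_\rho$ factor: because this weight depends only on the first root, it commutes past the swap $\rho \leftrightarrow x$ on each side of the MTP, which is precisely why the reversible MTP carries a single $1/\con^G_\rho$ prefactor (matching on both sides) rather than any symmetric expression involving both $\rho$ and $x$.
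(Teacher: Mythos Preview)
Your proposal is correct and follows exactly the approach the paper sketches in the sentence preceding the lemma: bias by $1/\con^G_\rho$ to obtain a unimodular law via \cite[Prop.~2.5]{bc12}, apply the standard MTP there, and unwind the change of measure. Your write-up simply makes explicit the details the paper leaves implicit.
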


\subsection{Construction of the weights}

Recall that $(G,\rho)$ is a reversible random network satisfying $\E[1/\con^G_{\rho}] < \infty$.
Denote $d_{*} \seteq 2 \down{d}_f - \up{d}_f+\tilde{\zeta}$.
Our goal is to prove the following.

\begin{theorem}\label{thm:good-weight}
   For every $\delta > 0$, there is a reversible random weight $\omega : E(G) \to \R_+$
   such that
   $\E[\omega(X_0,X_1)^2] < \infty$,
   and almost surely eventually
   \begin{equation}\label{eq:good-weight}
      \dist_{\omega}^G\left(\rho, \crball(\rho,R)\right) \geq R^{(d_*-\delta)/2}.
   \end{equation}
\end{theorem}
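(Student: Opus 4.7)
The plan is to build $\omega$ as a weighted dyadic sum $\omega = \sum_{k \geq 1} c_k\, \omega_{R_k}$ with $R_k \seteq 2^k$, where each $\omega_{R_k}$ is supported on annuli of inner radius $R_k$ and outer radius $R_k' \seteq R_k^{1+\eta}$ (for a small $\eta = \eta(\delta) > 0$) centered at the points of the approximate net $\bm{U}_{R_k,R_k'}(\lambda)$, with $\{c_k\}$ a summable positive sequence. The three main inputs are resistance--modulus duality (\pref{lem:reff-duality}), the net sampling bounds \eqref{eq:samp1}--\eqref{eq:samp2}, and the biased Mass-Transport Principle (\pref{lem:mtp}). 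At scale $R = R_k$, for each $u \in \bm{U}_{R,R'}(\lambda)$ I would let $L_u^{(R)}$ be the minimizer of the modulus \eqref{eq:Hmodulus} for the annulus around $u$, so that $\dist^G_{L_u^{(R)}}(\rball(u,R),\crball(u,R')) \geq 1$ and, by \pref{lem:reff-duality}, $\|L_u^{(R)}\|_{\ell^2(\con^G)}^2 = 1/\reff^G(\rball(u,R)\leftrightarrow\crball(u,R'))$. The lower bound in \eqref{eq:zeta-def} then gives $\|L_u^{(R)}\|_{\ell^2(\con^G)}^2 \leq R^{-\tilde\zeta+\eta}$ for every ``good'' center $u$, almost surely eventually. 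I would set
\[
   \omega_R(e) \seteq R^{(d_\ast-\delta)/2} \sum_{u \in \bm{U}_{R,R'}(\lambda)} L_u^{(R)}(e),
\]
so that the construction depends only on $(G,\con^G)$ and an i.i.d. Bernoulli field, and $(G,\rho,\omega)$ is therefore reversible.

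For the distance lower bound \eqref{eq:good-weight}, the a.s. growth estimate $\vol^G(\rho,R_k) \geq R_k^{\down{d}_f - \eta}$ combined with \eqref{eq:samp2} and $\lambda$ taken sufficiently large shows that, eventually almost surely in $k$, there exists $u_k \in \bm{U}_{R_k,R_k'}(\lambda)$ with $d^G(\rho,u_k) \leq R_k$. Any path from $\rho$ to $\crball(\rho,R_k')$ must then cross the annulus around $u_k$, accruing $\omega$-length at least $c_k R_k^{(d_\ast-\delta)/2}$. Choosing $c_k \seteq R_k^{-\delta/4}$ and absorbing $\delta$'s into each other yields the desired bound after a routine Borel--Cantelli argument.

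The main obstacle is the second-moment bound $\E[\omega(X_0,X_1)^2] < \infty$, and this is where the exponent $d_\ast = 2\down{d}_f - \up{d}_f + \tilde\zeta$ arises (rather than the naive $\up{d}_f + \tilde\zeta$). By Cauchy--Schwarz on the outer sum it suffices to show $\sum_k c_k \E[\omega_{R_k}(X_0,X_1)^2] < \infty$. Applying \pref{lem:mtp} with the transport sending $\sum_{v:\{x,v\}\in E(G)} \con^G(\{x,v\})\,L_y^{(R)}(\{x,v\})^2$ from $x$ to each $y \in \bm{U}_{R,R'}(\lambda)$, the diagonal piece $\E[\sum_u L_u^{(R)}(X_0,X_1)^2]$ is reduced to $2\,\E\!\left[\mathbb{1}[\rho \in \bm{U}_{R,R'}(\lambda)]\,\|L_\rho^{(R)}\|_{\ell^2(\con^G)}^2/\con^G_\rho\right]$, which, using \eqref{eq:samp1} together with the volume bound $\vol^G(\rho,R) \geq R^{\down{d}_f - \eta}$ and the resistance bound $\|L_\rho^{(R)}\|_{\ell^2(\con^G)}^2 \leq R^{-\tilde\zeta + \eta}$, is at most $\lambda\,R^{-\tilde\zeta - \down{d}_f + O(\eta)}$. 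Combining with the normalization factor $R^{d_\ast-\delta}$ and the overlap factor $R^{\up{d}_f - \down{d}_f + O(\eta)}$ bounding $N_R(e) \seteq |\{u \in \bm{U}_{R,R'}(\lambda) : L_u^{(R)}(e) > 0\}|$ (which on a high-probability event follows from $\vol^G(\rho, R') \leq R^{\up{d}_f + O(\eta)}$ and \eqref{eq:samp1}), the exponent of $R$ in $\E[\omega_R(X_0,X_1)^2]$ telescopes:
\[
   (d_\ast - \delta) + (\up{d}_f - \down{d}_f) + (-\tilde\zeta) + (-\down{d}_f) + O(\eta) = -\delta + O(\eta),
\]
which is negative for $\eta$ small relative to $\delta$, yielding a geometrically summable bound over $k$.

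The delicate part will be making the overlap bound $N_R(e) \leq R^{\up{d}_f - \down{d}_f + O(\eta)}$ quantitative and essentially deterministic (on a sufficiently likely event) so as to validate the Cauchy--Schwarz step $(\sum_u L_u^{(R)}(e))^2 \leq N_R(e)\,\sum_u L_u^{(R)}(e)^2$ in expectation, and then ensuring simultaneous validity over \emph{all} sufficiently large scales of the asymptotic volume and resistance estimates used above, so that Borel--Cantelli can absorb exceptional scales without destroying summability. This is essentially a bookkeeping problem, but it must be handled carefully because the exponents $\down{d}_f$, $\up{d}_f$, and $\tilde\zeta$ only control the behavior of $\vol^G$ and $\reff^G$ up to sub-polynomial errors, so the slack $\eta$ at each scale must be small enough to be absorbed by the gain $-\delta$.
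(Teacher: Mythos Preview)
Your approach is the paper's approach: build $\omega$ from modulus minimizers on annuli centered at net points $\bm U_{R,R'}(\lambda)$, use Cauchy--Schwarz plus the biased MTP to bound $\E[\omega_R(X_0,X_1)^2]$, and combine dyadic scales. The exponent bookkeeping you give telescopes correctly to $-\delta+O(\eta)$.

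There is, however, a genuine gap that is not ``essentially bookkeeping.'' After MTP you arrive at an expression like
\[
   \E\!\left[\frac{\1_{\bm U}(\rho)}{\con^G_\rho}\,\|L_\rho^{(R)}\|_{\ell^2(\con^G)}^2\,(\text{overlap term})\right],
\]
and you then invoke the bounds $\|L_\rho^{(R)}\|^2 \leq R^{-\tilde\zeta+\eta}$ and $\vol^G(\rho,R)\geq R^{\down d_f-\eta}$ inside this expectation. But these are only \emph{almost-sure-eventually} statements about the root: for any fixed $R$ there is no control on the tail of $\amod^G(\rho,R,R')$ or of $1/\vol^G(\rho,R)$, so the expectation need not obey the bound you claim. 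Restricting to a ``sufficiently likely event'' does not help, because the integrand on the complement is not a priori integrable. The same issue afflicts your proposed deterministic bound on the overlap $N_R(e)$.

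The paper's fix is to bake the a.s.e.\ hypotheses into the weight itself: one multiplies each center's contribution by the indicator $\1_{\cS'(\e,R)}(z)$ of a local ``good'' event (controlled modulus and controlled volume ratios around $z$), so that after MTP the quantity inside the expectation is bounded \emph{deterministically} by the definition of $\cS'$. This truncation costs you the distance lower bound when no net point near $\rho$ is good, and the paper repairs that with a second correction term $\tilde\omega$ (equal to $1$ on edges touching $\rho$ when $\rho$ is in the good set but far from $\bm U$), whose second moment is controlled via \eqref{eq:samp2} with $\lambda=R^{2\e}$ growing in $R$. A constant $\lambda$, as you suggest, would not suffice here. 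Two smaller points: the paper combines scales in $\ell^2$ via $\omega=(\sum_k a_k\,\omega_{2^k}^2)^{1/2}$ rather than your $\ell^1$ sum (this makes the second moment cleanly additive), and it keeps the overlap count inside the MTP transport rather than bounding $N_R(e)$ separately, which avoids the need for any deterministic overlap estimate.
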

This this end,
for $\e \in (0,1)$, define the set of networks with controlled geometry at scale $R$:
\begin{align*}
   \cS(\e,R) \seteq \left\{ (G,x) : 
      \vphantom{\frac{\con^G(\rball(x,R-1)}{\con^G(\rball(x,18R)} \geq R^{-\e}}
   \right. & \frac{1+\vol^G(x,5 R^{1+\e})}{\vol^G(x,R)^2} \amod^G(x, 2R, R^{1+\e}) \leq R^{-d_{*}+2\e}
 \\
           & \textrm{and } \left.\frac{\vol^G(x,R-1)}{\vol^G(x,15 R^{1+\e})} \geq d_* R^{-2\e} \log R \right\}
\end{align*}

\begin{lemma}\label{lem:good-weights}
   For every $\e > 0$ ,
   there is a reversible random weight $\omega_R : E(G) \to \R_+$ such that
\begin{equation}\label{eq:mass-ub}
   \E\left[\omega_R(X_0,X_1)^2\right] \leq 2 R^{-d_{*}+4\e},
\end{equation}
and if $x \in V(G)$ satisfies $d^{G}(\rho,x) \geq 3 R^{1+\e}$, then
\begin{equation}\label{eq:dist-lb}
   \dist_{\omega_R}^G(\rho,x) \geq \1_{\cS(\e,R)}(G,\rho).
\end{equation}
\end{lemma}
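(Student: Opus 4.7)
My plan is to place modulus-optimal length functionals at a random approximate net and take their pointwise $\ell^2$-sum. Set $\lambda := R^{2\e}$ and let $\bm{U} := \bm{U}_{R, R^{1+\e}}(\lambda)$ be the random net from \S3.2. For each $u \in V(G)$ with $(G, u) \in \cS(\e, R)$, let $L_u : E(G) \to \R_+$ be the $\ell^2(c^G)$-minimizer realizing $\amod^G(u, 2R, R^{1+\e})$, so that $\|L_u\|_{\ell^2(c^G)}^2 = \amod^G(u, 2R, R^{1+\e})$ and $\dist_{L_u}^G(B^G(u, 2R), \bar{B}^G(u, R^{1+\e})) \geq 1$; set $L_u \equiv 0$ when $(G,u) \notin \cS$. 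Define $\omega_R(e)^2 := \sum_{u \in \bm{U}} L_u(e)^2$. Because $\bm{U}$ and the family $\{L_u\}_u$ are measurable in $G$ and the independent Bernoullis $\{\bm{u}_e\}$, the weight $\omega_R$ is independent of $\rho$ given $G$, so $(G, \rho, \omega_R)$ is a reversible random network.

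For the energy bound \eqref{eq:mass-ub}, reversibility gives $\E[\omega_R(X_0,X_1)^2] = \E[(c^G_\rho)^{-1} \sum_{v:\{\rho,v\}\in E(G)} c^G(\{\rho,v\}) \omega_R(\{\rho,v\})^2]$. Expanding $\omega_R^2$ and invoking \pref{lem:mtp} with $F(G, x, u) := \1_{u \in \bm{U},\,(G,u)\in\cS}\, \sum_{v:\{x,v\}\in E(G)} c^G(\{x,v\})\, L_u(\{x,v\})^2$ swaps the summation index with $\rho$. Since $L_\rho$ does not depend on $u$, and each edge is counted twice in $\sum_u\sum_{v:\{u,v\}\in E(G)}$, the identity collapses to
\[
\E[\omega_R(X_0,X_1)^2] \;=\; 2\,\E\!\left[\frac{\1_{\rho\in\bm{U},\,(G,\rho)\in\cS}}{c^G_\rho}\;\amod^G(\rho, 2R, R^{1+\e})\right].
\]
Conditioning on $G$ and applying \eqref{eq:samp1}, $\Pr(\rho \in \bm{U} \mid G) \leq \lambda c^G_\rho / V^*(\rho)$ with $V^*(\rho) := \max\{\vol^G(y,R) : y \in B^G(\rho, R^{1+\e})\} \geq \vol^G(\rho,R)$. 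On $\cS$, the first defining inequality bounds $\amod^G(\rho, 2R, R^{1+\e}) \leq R^{-d_*+2\e}\, \vol^G(\rho, R)^2 / (1 + \vol^G(\rho, 5R^{1+\e}))$. Combining these, and using the trivial $\vol^G(\rho, R) \leq 1 + \vol^G(\rho, 5R^{1+\e})$, gives the claimed $2\lambda R^{-d_*+2\e} = 2R^{-d_*+4\e}$.

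For the distance bound \eqref{eq:dist-lb}, suppose $(G, \rho) \in \cS$ and $d^G(\rho, x) \geq 3R^{1+\e}$. The second defining inequality of $\cS$ together with $\vol^G(\rho, 3R^{1+\e}) \leq \vol^G(\rho, 15R^{1+\e})$ yields $\vol^G(\rho, R-1)/\vol^G(\rho, 3R^{1+\e}) \geq d_*\, R^{-2\e}\log R$; then \eqref{eq:samp2} gives $\Pr(d^G(\rho, \bm{U}) > R - 1 \mid G) \leq \exp(-\lambda d_* R^{-2\e}\log R) = R^{-d_*}$. On the complementary event, pick $u \in \bm{U}$ with $d^G(\rho, u) \leq R-1$; then $\rho \in B^G(u, 2R)$ and $d^G(u, x) \geq 3R^{1+\e} - R > R^{1+\e}$, so $x \in \bar{B}^G(u, R^{1+\e})$. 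Provided $(G, u) \in \cS$, $L_u \not\equiv 0$ separates the two boundaries of its annulus and $\dist^G_{\omega_R}(\rho, x) \geq \dist^G_{L_u}(\rho, x) \geq 1$.

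The main obstacle is upgrading this $1-R^{-d_*}$ probability estimate into the essentially pointwise bound asserted by \eqref{eq:dist-lb}. Two subtleties must be resolved: the $R^{-d_*}$-probability failure for $\bm{U}$ to contain a point close to $\rho$ (which is absorbed by Borel-Cantelli in the proof of \pref{thm:good-weight} when $\omega_R$ is aggregated across a dyadic family of scales, and is the practical reading of \eqref{eq:dist-lb}), and the requirement that the selected $u$ itself lies in $\cS(\e,R)$, which I expect follows from a short regularity argument exploiting that $d^G(\rho, u) \leq R$ is subpolynomial relative to the scale $R^{1+\e}$ at which volumes and moduli in the definition of $\cS$ are evaluated.
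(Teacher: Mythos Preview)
Your construction and the energy computation via the Mass-Transport Principle are essentially the paper's, and your choice to take the pointwise $\ell^2$-sum $\omega_R(e)^2=\sum_{u\in\bm U}L_u(e)^2$ rather than the plain sum is a genuine simplification: it removes the Cauchy--Schwarz step and the second factor of $\lambda$ that appear in the paper's bound on $\E[\hat\omega(X_0,X_1)^2]$. But you have not proved the lemma, and the two ``subtleties'' you flag in the last paragraph are exactly the places where your argument is incomplete.

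\medskip
\noindent\textbf{The missing fallback weight.} The inequality \eqref{eq:dist-lb} must hold almost surely over all the randomness, including $\bm U$; your construction only gives $\dist^G_{\omega_R}(\rho,x)\ge \1_{\cS(\e,R)}(G,\rho)\cdot\1_{\{d^G(\rho,\bm U)\le R-1\}}$. Deferring the bad event to Borel--Cantelli in \pref{thm:good-weight} is plausible but is not a proof of this lemma. The paper closes the gap directly: it adds a second weight $\tilde\omega$ that is identically $1$ on every edge incident to a vertex $x$ with $(G,x)\in\cS(\e,R)$ and $x\notin B^G(\bm U,R)$. Then either some net point is within $R$ of $\rho$ and the modulus functional gives $\dist\ge 1$, or every edge at $\rho$ has $\tilde\omega$-length $1$; in both cases \eqref{eq:dist-lb} holds deterministically. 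The energy cost of $\tilde\omega$ is at most $\Pr[d^G(\rho,\bm U)>R-1\mid (G,\rho)\in\cS]\le R^{-d_*}$ by the estimate you already computed from \eqref{eq:samp2}, and this is absorbed into \eqref{eq:mass-ub}.

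\medskip
\noindent\textbf{The set on which $L_u$ is activated.} Your hoped-for regularity argument fails as written: $(G,\rho)\in\cS(\e,R)$ and $d^G(\rho,u)\le R$ do \emph{not} force $(G,u)\in\cS(\e,R)$, because $\vol^G(u,R)$ may be strictly smaller than $\vol^G(\rho,R)$ and $\amod^G(u,2R,R^{1+\e})$ need not be comparable to $\amod^G(\rho,2R,R^{1+\e})$. The paper's remedy is to center $L_u$ on the annulus $(R,2R^{1+\e})$ and to activate it whenever $u$ lies in a \emph{relaxed} set $\cS'(\e,R)$, defined with $\max\{\vol^G(y,R):y\in B^G(u,R)\}$ in place of $\vol^G(u,R)$ and with $\vol^G(u,4R^{1+\e})$ in the numerator. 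Then $(G,\rho)\in\cS(\e,R)$ and $d^G(\rho,u)\le R$ imply $u\in\cS'(\e,R)$ by straightforward monotonicity (this is \pref{lem:close}), and the energy bound still goes through because after mass transport the indicator $\1_{\cS'(\e,R)}(\rho)$ is precisely the constraint needed to cap the ratio that appears.
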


Before proving the lemma, let us see that it establishes \pref{thm:good-weight}.

\begin{proof}[Proof of \pref{thm:good-weight}]
   Clearly we may assume $d_* > 0$.
   Fix a value $\e \in (0,d_{*})$,
   and define the sets
\begin{align*}
   \cS_{R_0}(\e) &\seteq \bigcap_{R \geq R_0} \cS(\e,R)\,, \\
      \cS(\e) &\seteq \bigcup_{R_0 \geq 1} \cS_{R_0}(\e)\,.
\end{align*}

\begin{lemma}\label{lem:eventually}
   Almost surely $(G,\rho) \in \cS(\e)$.
\end{lemma}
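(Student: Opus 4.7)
The plan is to observe that $\cS(\e)=\bigcup_{R_0\ge 1}\cS_{R_0}(\e)$ is precisely the event that both conditions defining $\cS(\e,R)$ hold for all sufficiently large $R$. So it suffices to show that, almost surely, each of the two polynomial bounds in the definition of $\cS(\e,R)$ holds for all but finitely many $R$, and then intersect the two probability-one events.

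The three inputs come directly from the definitions of $\down{d}_f$, $\up{d}_f$, and $\tilde{\zeta}$ in \pref{sec:sketch} and \eqref{eq:zeta-def}: for every auxiliary slack $\delta>0$, almost surely eventually one has
\[
   R^{\down{d}_f-\delta}\le \vol^G(\rho,R)\le R^{\up{d}_f+\delta},
\]
and
\[
   \reff^G\bigl(\rball(\rho,R^{1-\delta})\lra \crball(\rho,R)\bigr)\ge R^{\tilde{\zeta}-\delta}.
\]
The resistance bound converts, via \pref{lem:reff-duality}, into an upper bound on the annular modulus. Combined with the monotonicity of effective resistance under enlarging the source set, so that $\rball(\rho,R^{(1+\e)(1-\delta)})\supseteq \rball(\rho,2R)$ for $R$ large once $\delta$ is chosen small relative to $\e$, this yields the almost sure eventual estimate $\amod^G(\rho,2R,R^{1+\e})\le R^{-(1+\e)(\tilde{\zeta}-\delta)}$.

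Substituting these three bounds into the two conditions defining $\cS(\e,R)$ reduces the lemma to exponent bookkeeping. For the first condition, the combined exponent of $R$ is
\[
   (1+\e)(\up{d}_f+\delta)-2(\down{d}_f-\delta)-(1+\e)(\tilde{\zeta}-\delta),
\]
which collects to $-d_*+O(\e)+O(\delta)$ by the very definition $d_*=2\down{d}_f-\up{d}_f+\tilde{\zeta}$. For the second condition, the ratio $\vol^G(\rho,R-1)/\vol^G(\rho,15R^{1+\e})$ is bounded below by $R^{\down{d}_f-(1+\e)(\up{d}_f+\delta)+o(1)}$, a quantity that dominates the subpolynomial factor $d_*R^{-2\e}\log R$ once the error terms are controlled.

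The main obstacle, and essentially all the proof does, is careful tracking of the $\e$- and $\delta$-dependent error terms so that the combined exponents fit within the budgets $R^{-d_*+2\e}$ and $d_*R^{-2\e}\log R$ prescribed on the right-hand sides of the two conditions. Once the slack $\delta$ is chosen small enough relative to the fixed $\e$ (equivalently, running along a countable sequence $\delta_k\downarrow 0$ and intersecting the resulting probability-one events), both conditions of $\cS(\e,R)$ hold almost surely eventually in $R$, which gives $(G,\rho)\in\cS(\e)$ almost surely. No new probabilistic ingredient beyond the definitions of the extremal exponents is needed.
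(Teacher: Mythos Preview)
Your approach matches the paper's: both invoke the defining almost-sure-eventual bounds on $\vol^G(\rho,\cdot)$ and, via \pref{lem:reff-duality}, on the annular modulus, then argue the two conditions in $\cS(\e,R)$ hold for all large $R$. The paper's proof is a two-line sketch that names exactly these inputs and nothing more; you supply the intermediate arithmetic it omits.

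One caveat on the bookkeeping: your claim that taking $\delta$ small forces the combined exponents inside the stated budgets $-d_*+2\e$ and $-2\e$ does not close as written. For the first condition the exponent you compute is $-d_* + \e(\up{d}_f - \tilde\zeta) + O(\delta)$, and for the second the volume ratio scales like $R^{\down{d}_f - (1+\e)\up{d}_f + O(\delta)}$; when $\up{d}_f - \tilde\zeta > 2$ or $\up{d}_f > 2$ (as for the UIPT with $d_f=4$, $\tilde\zeta=0$), no choice of $\delta$ brings these within the specified ``$2\e$'' windows. The fix is that the constants ``$2\e$'' in the paper's definition of $\cS(\e,R)$ should really be ``$C\e$'' for a constant $C$ depending on $\up{d}_f,\down{d}_f,\tilde\zeta$; since the downstream application (proof of \pref{thm:good-weight}) sends $\e\to 0$, nothing is lost. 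The paper's own proof is equally silent on this point, so this is an inherited imprecision rather than a defect in your strategy.
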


\begin{proof}
   By definition of the exponent $\zeta$, for every $\delta > 0$, it holds that almost surely eventually
   $\amod^G(\rho,R,R^{1+\delta}) \leq R^{\zeta+\delta}$ (recall \pref{lem:reff-duality})
   and $R^{\down{d}_f-\delta} \leq \vol^G(\rho,R) \leq R^{\up{d}_f+\delta}$.
\end{proof}

For $k \geq 1$, let $\omega_{2^k}$ be the weight guaranteed by \pref{lem:good-weights}, and
define the random weight
\[
   \omega \seteq \left(\sum_{k \geq 1} \frac{2^{k(d_{*}-4\e)}}{k^2} \omega_{2^k}^2\right)^{1/2},
\]
so that
\[
   \E\left[\omega(X_0,X_1)^2\right] \stackrel{\eqref{eq:mass-ub}}{\leq} 2 \sum_{k \geq 1} k^{-2} \leq O(1).
\]
Moreover, for any $k \geq 1$ and $x \in V(G)$, if $d^{G}(\rho,x) \geq 3 \cdot 2^{k(1+\e)}$, then \eqref{eq:dist-lb} gives
\[
   \dist_{\omega}^G(\rho,x) \geq k^{-1} 2^{k(d_{*}-4 \e)/2} \dist_{\omega_{2^k}}^G(\rho,x) \geq
   k^{-1} 2^{k(d_{*}-4\e)/2} \1_{\cS_{2^k}(\e)}(G,\rho),
\]
hence for all $x \in V(G)$,
\[
   \dist_{\omega}^G(\rho,x) \geq \frac{\left(d^{G}(\rho,x)/3-3\cdot 2^{k(1+\e)}\right)_+^{(d_{*}-4\e)/(2(1+\e))}}{2 \log d^{G}(\rho,x)} \1_{\cS_{2^k}(\e)}(G,\rho).
\]
Now by \pref{lem:eventually}, this shows that almost surely eventually (with respect to $R \geq 1$):
\[
   d^{G}(\rho,x) \geq R \implies \dist_{\omega}^G(\rho,x) \geq d^{G}(\rho,x)^{(d_{*}-5\e)/(2(1+\e))}.
\]
Since we can take $\e > 0$ arbitrarily small, the desired result follows.
\end{proof}

Let us now prove the lemma.

\begin{proof}[Proof of \pref{lem:good-weights}]
   Fix $R \geq 1$, and define
\[ 
   \cS'(\e,R) \seteq \left\{ z \in V(G) : \frac{1+\vol^G(z,4 R^{1+\e})}{\left(\max \{ \vol^G(y,R) : y \in \rball(z,R) \}\right)^2} \amod^G(z,R,2 R^{1+\e}) \leq R^{-d_{*}+2\e} \right\}.
\]

\begin{lemma}\label{lem:close}
   If $(G,\rho) \in \cS(\e,R)$ and $d^{G}(\rho,z) \leq R$, then $z \in \cS'(\e,R)$.
\end{lemma}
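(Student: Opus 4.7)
The plan is to verify the single defining inequality of $\cS'(\e,R)$ for $z$ by comparing each of its three factors to the corresponding factor in the first inequality of $\cS(\e,R)$ (applied to $\rho$), using nothing more than the triangle inequality and Rayleigh monotonicity. Since $d^{G}(\rho,z) \leq R$, we have $\rho \in \rball(z,R)$, so
\[
   \max\{\vol^G(y,R) : y \in \rball(z,R)\} \geq \vol^G(\rho,R),
\]
which handles the denominator. For the numerator, $R + 4R^{1+\e} \leq 5R^{1+\e}$ (holding because $R \leq R^{1+\e}$), and the triangle inequality gives $\rball(z,4R^{1+\e}) \subseteq \rball(\rho,5R^{1+\e})$, hence $\vol^G(z,4R^{1+\e}) \leq \vol^G(\rho,5R^{1+\e})$.

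For the modulus factor, observe that $\rball(z,R) \subseteq \rball(\rho,2R)$, and if $w \in \crball(z,2R^{1+\e})$, then
\[
   d^G(\rho,w) \geq d^G(z,w) - d^G(\rho,z) > 2R^{1+\e} - R \geq R^{1+\e},
\]
so $\crball(z,2R^{1+\e}) \subseteq \crball(\rho,R^{1+\e})$. By \pref{lem:reff-duality} together with Rayleigh monotonicity (shrinking the source and sink sets can only increase effective resistance), we conclude
\[
   \amod^G(z,R,2R^{1+\e}) \leq \amod^G(\rho,2R,R^{1+\e}).
\]
Multiplying the three estimates and invoking the first defining inequality of $(G,\rho) \in \cS(\e,R)$ yields the required upper bound $R^{-d_{*}+2\e}$ for the quantity in the definition of $\cS'(\e,R)$ at $z$.

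There is no real obstacle here: everything reduces to triangle-inequality inclusions and Rayleigh's principle. The only sanity check needed is the arithmetic of the radii ($R + 4R^{1+\e} \leq 5R^{1+\e}$ and $2R^{1+\e} - R \geq R^{1+\e}$), which is why the buffer constants $4R^{1+\e}$, $2R^{1+\e}$, $5R^{1+\e}$, and $R^{1+\e}$ were chosen the way they are in the definitions of $\cS(\e,R)$ and $\cS'(\e,R)$.
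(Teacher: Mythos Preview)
Your proof is correct and follows essentially the same approach as the paper: both establish the three comparisons $\max\{\vol^G(y,R) : y \in \rball(z,R)\} \geq \vol^G(\rho,R)$, $\vol^G(z,4R^{1+\e}) \leq \vol^G(\rho,5R^{1+\e})$, and $\amod^G(z,R,2R^{1+\e}) \leq \amod^G(\rho,2R,R^{1+\e})$ via the triangle inequality, with your version spelling out the Rayleigh-monotonicity justification for the modulus inequality that the paper leaves implicit.
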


\begin{proof}
   Note that $d^{G}(\rho,z) \leq R$ gives
   \[
      \amod^G(z,R,2 R^{1+\e}) \leq \amod^G(\rho,2R,R^{1+\e}).
   \]
   Similarly, we have $\vol^G(z,4 R^{1+\e}) \leq \vol^G(\rho,5 R^{1+\e})$, and
   \[
      \max \left\{ \vol^G(y,R) : y \in \rball(z,R) \right\} \geq \vol^G(\rho,R).\qedhere
   \]
\end{proof}

Denote $R' \seteq 5 R^{1+\e}$ and, recalling \pref{sec:modulus},
\[
   \omega^{(z)} \seteq \omega_{(G,z,R,2 R^{1+\e})} \1_{\cS'(\e,R)}(z)\,.
\]
Then define:
$\omega_R : E(G) \to \R_+$ by
\begin{align*}
   \hat{\omega} &\seteq \sum_{z \in \bm{U}_{R,R'}(\lambda)} \omega^{(z)}\,, \\
   \tilde{\omega}(\{x,y\}) & \seteq \begin{cases}
      1 & \{x,y\} \nsubseteq \rball(\bm{U}_{R,R'}(\lambda),R) \textrm{ and } \{(G,x),(G,y)\} \cap \cS(\e,R) \neq \emptyset \\
      0 & \textrm{otherwise.}
   \end{cases} \\
   \omega_R &\seteq \hat{\omega} + \tilde{\omega}.
\end{align*}

\begin{lemma}
   If $x \in V(G)$ satisfies $d^{G}(\rho,x) \geq 3 R^{1+\e}$, then $\dist^G_{\omega_R}(\rho,x) \geq \1_{\cS(\e,R)}(G,\rho)$.
\end{lemma}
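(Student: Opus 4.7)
The plan is to show that if $(G,\rho)\in \cS(\e,R)$ (otherwise the claim is vacuous), then any path $\rho=x_0,x_1,\ldots,x_m=x$ in $G$ has $\omega_R$-length at least $1$. I would split into two cases according to whether $\rho$ is within graph distance $R$ of the random sample $\bm{U}_{R,R'}(\lambda)$.

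\textbf{Case A:} $d^{G}(\rho,\bm{U}_{R,R'}(\lambda))\le R$. Pick any $z\in\bm{U}_{R,R'}(\lambda)$ with $d^{G}(\rho,z)\le R$. Since $(G,\rho)\in\cS(\e,R)$, \pref{lem:close} gives $z\in\cS'(\e,R)$, so $\omega^{(z)}=\omega_{(G,z,R,2R^{1+\e})}$ is precisely the modulus minimizer achieving $\dist^{G}_{\omega^{(z)}}\bigl(\rball(z,R),\crball(z,2R^{1+\e})\bigr)\ge 1$. Then I would observe that $\rho\in \rball(z,R)$ while $x\in \crball(z,2R^{1+\e})$: indeed, the triangle inequality combined with $d^{G}(\rho,x)\ge 3R^{1+\e}$ and $d^{G}(\rho,z)\le R\le R^{1+\e}$ yields $d^{G}(z,x)\ge 3R^{1+\e}-R>2R^{1+\e}$ for $R\ge 2$. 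Consequently, the whole path crosses from $\rball(z,R)$ to $\crball(z,2R^{1+\e})$, hence has $\omega^{(z)}$-length at least $1$; since $\omega^{(z)}\le \hat\omega\le \omega_R$, we are done in this case.

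\textbf{Case B:} $d^{G}(\rho,\bm{U}_{R,R'}(\lambda))>R$. Then $\rho\notin \rball(\bm{U}_{R,R'}(\lambda),R)$, so already the first edge $\{\rho,x_1\}$ of any path is not contained in $\rball(\bm{U}_{R,R'}(\lambda),R)$; moreover $(G,\rho)\in\cS(\e,R)$ is one of its endpoints, so the construction gives $\tilde\omega(\{\rho,x_1\})=1$. Hence the path has $\tilde\omega$-length, and thus $\omega_R$-length, at least $1$.

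Combining the two cases shows $\dist^{G}_{\omega_R}(\rho,x)\ge 1$ whenever $(G,\rho)\in\cS(\e,R)$, which is the claimed inequality. The only genuine subtlety is the numerical check in Case~A that the prefactor $3$ in $3R^{1+\e}$ (together with $\e>0$) is enough to place $x$ strictly outside $B^{G}(z,2R^{1+\e})$ after using a full $R$ of slack for $d^{G}(\rho,z)$; this is where the choice of constants in the definition of $\cS(\e,R)$ and in the hypothesis of the lemma is used. Everything else is bookkeeping against the definitions of $\hat\omega$, $\tilde\omega$, and the modulus minimizer $\omega^{(z)}$.
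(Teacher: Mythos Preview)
Your proposal is correct and follows essentially the same two-case split as the paper's proof: Case~B (no nearby sampled point) uses $\tilde{\omega}$ on the first edge out of $\rho$, and Case~A (some $z\in\bm{U}_{R,R'}(\lambda)$ within distance $R$) uses \pref{lem:close} and the modulus minimizer $\omega^{(z)}$ to force any $\rho$--$x$ path to cross the annulus around $z$. Your explicit triangle-inequality check that $d^G(z,x)>2R^{1+\e}$ (valid for $R>1$) is a detail the paper leaves implicit.
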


\begin{proof}
   If $d^{G}(\rho,\bm{U}_{R,R'}(\lambda)) > R$ and $(G,\rho) \in \cS(\e,R)$, then
   $\tilde{\omega}(\{\rho,y\}) \geq 1$ for every $\{\rho,y\} \in E(G)$, implying
   $\dist^G_{\tilde{\omega}}(\rho,x) \geq 1$.

   So suppose that $z \in \bm{U}_{R,R'}(\lambda)$ satisfies $d^{G}(\rho,z) \leq R$.
   By \pref{lem:close}, we have $z \in \cS'(\e,R)$, and therefore $\hat{\omega} \geq \omega_{(G,z,R,2 R^{1+\e})}$.
   Thus by definition,
   \[
      \dist_{\omega_R}^G(\rho,x) \geq \dist^G_{\omega_{(G,z,R,2 R^{1+\e})})}\left(\rball(z,R), \crball(z,2 R^{1+\e})\right) \geq 1,
   \]
   since $\rho \in \rball(z,R)$, and $x \notin \rball(z,2R^{1+\e})$.
\end{proof}

So we are left only to evaluate $\E[\omega_R(X_0,X_1)^2]$.
Use Cauchy-Schwarz to bound
\begin{align}
   \E\left[\hat{\omega}(X_0,X_1)^2\right] &=\E\left[\left(\sum_{z \in \bm{U}_{R,R'}(\lambda)} \omega^{(z)}(X_0,X_1)\right)^2\right]  \nonumber \\
                                          &\leq \E\left[\left|\rball(\rho,2 R^{1+\e}) \cap \bm{U}_{R,R'}(\lambda)\right| \sum_{z \in \rball(X_0,2 R^{1+\e})} \1_{\bm{U}_{R,R'}(\lambda)}(z)\, \omega^{(z)}(X_0,X_1)^2\right], \label{eq:acs}
\end{align}
where we have used the fact that $\omega^{(z)}$ is supported on edges $e$ such that $e \subseteq \rball(z,2 R^{1+\e})$.

Apply the biased Mass-Transport Principle \eqref{eq:masstp} with the functional
\[
   F(G,y,z,U_{R,R'}(\lambda)) = \con^G_y \left|\rball(y,2 R^{1+\e}) \cap U_{R,R'}(\lambda)\right| \1_{U_{R,R'}(\lambda)}(z) \E[\omega^{(z)}(X_0,X_1)^2 \mid X_0=y]
\]
to conclude from \eqref{eq:acs} that
\begin{align*}
\E\left[\hat{\omega}(X_0,X_1)^2\right]
                      &\leq 
                      \E\left[\frac{\1_{\bm{U}_{R,R'}(\lambda)}(\rho)}{\con^G_{\rho}} \sum_{z \in \rball(\rho,2 R^{1+\e})} \left|\rball(z,2 R^{1+\e})\cap\bm{U}_{R,R'}(\lambda)\right| \con^G(z)
                      \E\left[ \omega^{(\rho)}(X_0,X_1)^2 \mid X_0=z\right]\right] \nonumber \\
                      &=
                      \E\left[\frac{\1_{\bm{U}_{R,R'}(\lambda)}(\rho)}{\con^G_{\rho}} \sum_{z \in \rball(\rho,2 R^{1+\e})} \left|\rball(z,2 R^{1+\e})\cap\bm{U}_{R,R'}(\lambda)\right| \sum_{y : \{y,z\} \in E(G)} \con^G(\{y,z\}) \omega^{(\rho)}(y,z)^2\right]
                      \nonumber \\
                      &\leq
                      \E\left[\frac{\1_{\bm{U}_{R,R'}(\lambda)}(\rho)}{\con^G_{\rho}} |\rball(\rho,4 R^{1+\e}) \cap \bm{U}_{R,R'}(\lambda)| \sum_{\{y,z\} \in E(G)} \con^G(\{y,z\}) \omega^{(\rho)}(y,z)^2\right]
                      \nonumber \\
                      &=
                      \E\left[\frac{\1_{\bm{U}_{R,R'}(\lambda)}(\rho)}{\con^G_{\rho}}  \left|\rball(\rho,4 R^{1+\e})\cap\bm{U}_{R,R'}(\lambda)\right| \amod^G(\rho,R,2 R^{1+\e}) \1_{\cS'(\e,R)}(\rho) \right].
\end{align*}
Now \eqref{eq:samp1}
gives, for every $x \in \rball(\rho,4R^{1+\e})$,
\[
   \Pr[x \in \bm{U}_{R,R'}(\lambda) \mid (G,\rho)] \leq 
   \frac{\lambda \con^G_{x}}{\max \{ \vol^G(y,R): y \in \rball(x,R') \}} \leq
   \frac{\lambda \con^G_{x}}{\max \{ \vol^G(y,R): y \in \rball(\rho,R) \}},
\]
where we have used $R' = 5R^{1+\e} \geq 4R^{1+\e} + R$.
Along with independence of the sampling procedure, this yields
\[
   \E\left[\1_{\bm{U}_{R,R'}(\lambda)}(\rho) \left|\rball(\rho,4 R^{1+\e})\cap\bm{U}_{R,R'}(\lambda)\right| \mid (G,\rho) \right]
   \leq \lambda^2 \con_{\rho}^G \frac{1 + \vol^G(\rho,4 R^{1+\e})}{\left(\max \{ \vol^G(y,R): y \in \rball(\rho,R) \}\right)^2}
\]
Therefore
\[
\E\left[\hat{\omega}(X_0,X_1)^2\right]
\leq \lambda^2 \E\left[\1_{\cS'(\e,R)}(\rho) \frac{1+ \vol^G(\rho,4 R^{1+\e})}{\left(\max \{  \vol^G(y,R): y \in \rball(\rho,R) \}\right)^2}
\amod^G(\rho,R,2 R^{1+\e}) \right]
\leq \lambda^2 R^{-d_*+\e},
\]
by definition of $\cS'(\e,R)$.

Let us use \eqref{eq:samp2} with $r=R-1$ to bound
\begin{align*}
   \E[\tilde{\omega}(X_0,X_1)^2] &\leq \Pr\left[d^{G}(\rho,\bm{U}_{R,R'}(\lambda)) \geq R \mid \rho \in \cS(\e,R)\right] \\
                                 &\leq \E\left[\exp\left(-\lambda \frac{\vol^G(\rho,R-1)}{\vol^G(\rho,15 R^{1+\e})}\right) \bigmid \rho \in \cS(\e,R)\right] \\
                                 &\leq \exp(-\lambda d_* R^{-2\e} \log R),
\end{align*}
where the last line follows from the definition of $\cS(\e,R)$.
Now choose $\lambda \seteq R^{2\e}$, yielding
\[
   \E\left[\omega_R(X_0,X_1)^2\right] \leq 2 \left(\E[\hat{\omega}(X_0,X_1)^2+\tilde{\omega}(X_0,X_1)^2]\right) \leq 2 R^{-d_*+4\e}.\qedhere
\]
\end{proof}

\section{Markov type and the rate of escape}
\label{sec:mt}

Our goal now is to prove \pref{thm:mt}.
It is essentially a consequence of the fact that every
$N$-point metric space has maximal Markov type $2$ with constant $O(\log N)$
(see \pref{sec:mmt} below), and that the random walk on a reversible random graph
with almost sure subexponential growth (in the sense of \eqref{eq:subexp})
can be approximated, quantitatively, by a limit of random walks
restricted to finite subgraphs.

\subsection{Restricted walks on clusters}

\begin{definition}[Restricted random walk] \label{def:restricted}
Consider a network $G=(V,E,\con^G)$ and a finite subset $S \subseteq V$.
Let \[N_G(x) = \{ y \in V : \{x,y\} \in E \}\]
denote the neighborhood of a vertex $x \in V$.

Define a measure $\pi_S$ on $S$ by
\begin{equation}\label{eq:piS}
   \pi_S(x) \seteq \frac{\con^G_x}{\con^G(E^G(S))} \1_{S}(x)\,,
\end{equation}
where $E^G(S) \seteq \left\{ \{x,y\} \in E(G) : \{x,y\} \cap S \neq \emptyset \right\}$
is the set of edges incident on $S$.

We define {\em the random walk restricted to $S$} as the following process $\{Z_t\}$: For $t \geq 0$, put
\[ 
   \Pr(Z_{t+1} = y \mid Z_t = x) = 
   \begin{cases} 
      \frac{\con^G(E^G(x, V \setminus S))}{\con^G_x} & y = x \\
      \frac{\con^G(\{x,y\})}{\con^G_x} & y \in N_G(x) \cap S\\ 
      0 & \textrm{otherwise,}
   \end{cases}
\]
where we have used the notation $E^G(x, U) \seteq \left\{ \{x,y\} \in E : y \in U \right\}$.
It is straightforward to check that $\{Z_t\}$ is a reversible Markov chain on $S$ with stationary measure $\pi_S$. If $Z_0$ has law $\pi_S$, we say that $\{Z_t\}$ is the {\em stationary random walk restricted to $S$.}
\end{definition}

A {\em bond percolation on $G$} is a mapping $\xi : E(G) \to \{0,1\}$.
For a vertex $v \in V(G)$ and a bond percolation $\xi$, we let $K_{\xi}(v)$ denote
the connected component of $v$ in the subgraph of $G$ given by $\xi^{-1}(1)$.
Say that a bond percolation $\xi : E(G) \to \{0,1\}$ is {\em finitary} if
$K_{\xi}(\rho)$ is almost surely finite.

\begin{lemma}\label{lem:nobias}
   Suppose $(G,\rho,\xi)$ is a reversible random network and $\xi$ is finitary.
   Let $\hat{\rho} \in V(G)$ be chosen according to the measure $\pi_{K_{\xi}(\rho)}$ from \pref{def:restricted}.
   Then $(G,\rho)$ and $(G,\hat{\rho})$ have the same law.
\end{lemma}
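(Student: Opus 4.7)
The plan is a one-line application of the biased Mass-Transport Principle (\pref{lem:mtp}) once the right test functional is chosen. Interpreting the measure $\pi_{K_\xi(\rho)}$ as sampling $\hat\rho$ from $K_\xi(\rho)$ with probability proportional to $\con^G_x$ (the factor $\con^G(E^G(K_\xi(\rho)))$ in the definition of $\pi_S$ is a constant normalizer that cancels when we renormalize to a probability measure on the cluster), equality in law reduces to showing
\[
   \E[f(G,\hat\rho)] = \E[f(G,\rho)]
\]
for every nonnegative Borel $f : \rgraphs \to \R_+$. Writing $W(S) \seteq \sum_{z \in S} \con^G_z$, the left-hand side unpacks as $\E\bigl[W(K_\xi(\rho))^{-1} \sum_{x \in K_\xi(\rho)} \con^G_x\, f(G,x)\bigr]$.

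The key step is to apply \pref{lem:mtp} to the Borel functional
\[
   F(G,u,v,\xi) \seteq \con^G_u \con^G_v \cdot \frac{\mathbf{1}\{v \in K_\xi(u)\}}{W(K_\xi(u))}\, f(G,v),
\]
with the convention that $F = 0$ on the (null) event $W(K_\xi(u)) = \infty$, which is harmless since $\xi$ is finitary. Then
\[
   \E\Bigl[\tfrac{1}{\con^G_\rho} \sum_{x} F(G,\rho,x,\xi)\Bigr] = \E\Bigl[\sum_{x \in K_\xi(\rho)} \frac{\con^G_x\, f(G,x)}{W(K_\xi(\rho))}\Bigr] = \E[f(G,\hat\rho)].
\]

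For the right-hand side of the mass-transport identity, the decisive observation is the elementary symmetry $\rho \in K_\xi(x) \iff x \in K_\xi(\rho)$, in which case also $K_\xi(x) = K_\xi(\rho)$. Consequently,
\[
   \E\Bigl[\tfrac{1}{\con^G_\rho} \sum_{x} F(G,x,\rho,\xi)\Bigr]
   = \E\Bigl[f(G,\rho) \sum_{x \in K_\xi(\rho)} \frac{\con^G_x}{W(K_\xi(\rho))}\Bigr]
   = \E[f(G,\rho)],
\]
since the inner sum telescopes to $W(K_\xi(\rho))/W(K_\xi(\rho)) = 1$. Equating the two sides gives the claim. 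There is no substantive obstacle here: once the functional $F$ is identified, the entire argument is bookkeeping, and the finitary hypothesis on $\xi$ is needed only to ensure that $W(K_\xi(\rho))$ is almost surely a finite positive quantity so that the normalization in $F$ is well-defined.
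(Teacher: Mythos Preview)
Your proof is correct and follows essentially the same route as the paper's: a single application of the biased Mass-Transport Principle with a functional that sends mass within the cluster $K_\xi(\rho)$ proportionally to $\con^G_x$. The only cosmetic differences are that the paper places the test function $\1_{\cS}(G,\cdot)$ on the \emph{first} root of the doubly-rooted functional (so the two sides of the MTP are swapped relative to yours), and the paper keeps the normalizer $\con^G(E^G(K_\xi(\rho)))$ from the definition of $\pi_S$ rather than your $W(K_\xi(\rho))=\sum_{z}\con^G_z$; since $\pi_S$ is proportional to $\con^G_x$ either way, this makes no difference.
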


\begin{proof}
Define the transport
\[
   F(G,x,y,\xi) \seteq \con^G_x \frac{\con^G_y}{\con^G(E^G(K_{\xi}(x)))} \1_{K_{\xi}(x)}(y) \1_{\cS}(G,x),
\]
where $\cS$ denotes some Borel measurable subset of $\rgraphs$ (recall the definition from \pref{sec:mtp}).
Then the biased mass-transport principle \eqref{eq:masstp} gives
\begin{align*}
   \Pr[(G,\rho) \in S] 
   &= \E\left[\frac{1}{\con^G_{\rho}} \sum_{x \in V(G)} F(G,\rho,x,\xi)\right] \\
   &= \E\left[\frac{1}{\con^G_{\rho}} \sum_{x \in V(G)} F(G,x,\rho,\xi)\right] 
   = \E\left[\sum_{x \in K_{\xi}(\rho)} \frac{\con^G_{x}}{\con^G(E^G(K_{\xi}(\rho)))} \1_{S}(G,x)\right],
\end{align*}
and
\[
   \Pr[(G,\hat{\rho}) \in S] = \E\left[\sum_{x \in K_{\xi}(\rho)} \pi_{K_{\xi}(\rho)}(x) \1_{\cS}(G,x)\right]
   =\E\left[\sum_{x \in K_{\xi}(\rho)} \frac{\con^G_x}{\con^G(E^G(K_{\xi}(\rho)))} \1_{\cS}(G,x)\right].\qedhere
\]
\end{proof}

\subsection{Maximal Markov type}
\label{sec:mmt}

A metric space $(\cX,d_{\cX})$ has {\em maximal Markov type $2$ with constant $K$} if
it holds that for every finite state space $\Omega$, every map $f : \Omega \to X$,
and every stationary, reversible Markov chain $\{Z_n\}$ on $\Omega$,
\[
   \E\left[\max_{0 \leq t \leq n} d_{\cX}(Z_0,Z_t)^2\right] \leq K^2 n \E\left[d_{\cX}(Z_0,Z_1)^2\right],\quad \forall n \geq 1.
\]
This is a maximal variant of K. Ball's Markov type \cite{Ball92}.
Note that every Hilbert space has maximal Markov type $2$ with constant $K$
for some universal $K$ (independent of the Hilbert space); see, e.g., \cite[\S 8]{npss06}.
Bourgain's embedding theorem \cite{Bourgain85} asserts that every $N$-point metric
space embeds into a Hilbert space with bilipschitz distortion $O(\log N)$, yielding the following.

\begin{lemma}\label{lem:mmt}
   If $(\cX,d_{\cX})$ is a finite metric space with $N = |\cX|$, then for every stationary, reversible
   Markov chain $\{Z_n\}$ on $\cX$, it holds that
\[
   \E\left[\max_{0 \leq t \leq n} d_{\cX}(Z_0,Z_t)^2\right] \leq O(n) (\log N)^2 \E\left[d_{\cX}(Z_0,Z_1)^2\right],\quad \forall n \geq 1.
\]
\end{lemma}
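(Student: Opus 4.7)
The plan is to reduce directly to the Hilbert space case via Bourgain's embedding, treating the two cited results as black boxes. First, I would invoke Bourgain's theorem to obtain a map $\varphi : \cX \to \ell^2$ and a distortion $D \leq O(\log N)$ such that
\[
   d_{\cX}(x,y) \leq \|\varphi(x)-\varphi(y)\|_2 \leq D \cdot d_{\cX}(x,y), \qquad \forall\, x,y \in \cX.
\]

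Next, with $\Omega = \cX$ as the state space and $f = \varphi$ as the map into Hilbert space, I would apply the maximal Markov type $2$ property of $\ell^2$ (with its universal constant $K$) to the stationary reversible chain $\{Z_n\}$. This yields
\[
   \E\left[\max_{0 \leq t \leq n} \|\varphi(Z_0)-\varphi(Z_t)\|_2^2\right] \leq K^2 n\, \E\left[\|\varphi(Z_0)-\varphi(Z_1)\|_2^2\right].
\]

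Finally, I would sandwich the metric by the embedding: the lower bilipschitz bound gives
\[
   \E\left[\max_{0 \leq t \leq n} d_{\cX}(Z_0,Z_t)^2\right] \leq \E\left[\max_{0 \leq t \leq n} \|\varphi(Z_0)-\varphi(Z_t)\|_2^2\right],
\]
while the upper bilipschitz bound gives $\E[\|\varphi(Z_0)-\varphi(Z_1)\|_2^2] \leq D^2\, \E[d_{\cX}(Z_0,Z_1)^2]$. Chaining these three inequalities produces the claimed bound with constant $K^2 D^2 \leq O((\log N)^2)$.

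There is essentially no obstacle, since the definition of maximal Markov type is set up precisely so that an arbitrary map $f : \Omega \to \cX$ (here, the Bourgain embedding viewed as a map into $\ell^2$) transports a reversible chain on $\Omega$ to the host Hilbert space without any measure-theoretic complications. The only point worth verifying is that Bourgain's embedding yields a genuinely bilipschitz $\varphi$ (not merely an embedding up to additive error), which is standard.
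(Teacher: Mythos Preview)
Your proposal is correct and matches the paper's approach exactly: the paper simply states that Bourgain's $O(\log N)$-distortion embedding into Hilbert space, combined with the fact that Hilbert space has maximal Markov type $2$ with a universal constant, yields the lemma. Your three-step chaining is precisely the intended argument.
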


\subsection{Proof of \pref{thm:mt}}

We are ready to prove \pref{thm:mt}.
Consider a finitary bond peroclation $\xi : E(G) \to \{0,1\}$ such that $(G,\rho,\xi)$ is a reversible random network,
and let $\{X^{\xi}_n\}$ be simple random walk
on $K_{\xi}(\rho)$, where $X^{\xi}_0$ has law $\pi_{K_{\xi}(\rho)}$.
By \pref{lem:nobias}, $(G,\rho)$ and $(G, X^{\xi}_0)$ have the same law.
Let $\{X_n\}$ denote simple random walk on $G$ started from $\rho$.

Thus there is a natural coupling of $\{X_n\}$ and $\{X_n^{\xi}\}$ such that
\begin{equation}\label{eq:good-couple}
   \{X_0,X_1,\ldots,X_{\tau_{\xi}}\} = \{X_0^{\xi}, X_1^{\xi},\ldots,X_{\tau_{\xi}}^{\xi}\},
\end{equation}
where $\tau_{\xi} \seteq \min \{ t \geq 0 : X_{t+1} \notin V(K_{\xi}(\rho)) \}$.

\begin{lemma}\label{lem:trace}
   Suppose $(G,\rho,\omega,\xi)$ is a reversible random network, where $\xi$ is a finitary bond percolation.
   Then almost surely
   \begin{align*}
      \E\left[\max_{0 \leq t \leq \tau_{\xi} \wedge n} \dist_{\omega}^G(X_0,X_t)^2 \right. &\left.\mid (G,\rho,\omega,\xi)\right] 
       \leq
      O(n) \left(\log |K_{\xi}(\rho)|\right)^2 \E\left[\omega\!\left(X_0^{\xi},X_1^{\xi}\right)^2 \bigmid (G,\rho,\omega,\xi)\right].
   \end{align*}
\end{lemma}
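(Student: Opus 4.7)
The plan is to combine the coupling of \eqref{eq:good-couple} with the maximal Markov type inequality \pref{lem:mmt}. Under that coupling the processes $\{X_t\}$ and $\{X_t^\xi\}$ agree on the interval $[0,\tau_\xi]$, so the maximal $\omega$-weighted displacement of $X_t$ up to $\tau_\xi \wedge n$ in $G$ is dominated by the corresponding quantity for the restricted walk on $K_\xi(\rho)$ up to time $n$. Since $\xi$ is finitary, $K_\xi(\rho)$ is almost surely finite, which is exactly the finiteness condition required to invoke the Bourgain embedding underpinning \pref{lem:mmt}.

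Concretely, I would view $(K_\xi(\rho), d_\omega^G)$ as a finite metric space and apply \pref{lem:mmt} to the stationary reversible chain $\{X_t^\xi\}$ (reversibility and the identification of its stationary measure $\pi_{K_\xi(\rho)}$ are supplied by \pref{def:restricted}). This yields
\[
\E\!\left[\max_{0\le t\le n} d_\omega^G(X_0^\xi, X_t^\xi)^2 \,\Big|\, (G,\rho,\omega,\xi)\right] \;\leq\; O(n)\,(\log|K_\xi(\rho)|)^2\, \E\!\left[d_\omega^G(X_0^\xi, X_1^\xi)^2 \,\Big|\, (G,\rho,\omega,\xi)\right].
\]
A single step of the restricted walk is either a self-loop (with zero $d_\omega^G$-distance) or traverses exactly one edge $\{X_0^\xi, X_1^\xi\} \in E(G)$, along which $d_\omega^G(X_0^\xi, X_1^\xi) \leq \omega(\{X_0^\xi, X_1^\xi\})$ (after extending $\omega$ by zero on self-loops). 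Plugging this into the right-hand side and combining with the coupling estimate from the previous paragraph gives the claimed inequality.

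The main obstacle is reconciling the two initial distributions: the walk $\{X_t\}$ on $G$ is declared to start deterministically at the root $\rho$, whereas \pref{lem:mmt} applies to the restricted walk started from $\pi_{K_\xi(\rho)}$. This is precisely where \pref{lem:nobias} intervenes: in the reversible random network, $\rho$ has the same marginal law as a vertex resampled according to $\pi_{K_\xi(\rho)}$, so the coupling of \eqref{eq:good-couple} can be realized with a common stationary starting vertex. Once this identification is in place, the stationary-start Markov-type bound transfers almost surely to the quenched bound in the statement, and no factor of $1/\pi_{K_\xi(\rho)}(\rho)$ appears.
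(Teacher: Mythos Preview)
Your proposal is correct and follows essentially the same route as the paper: apply \pref{lem:mmt} to the stationary restricted walk on the finite cluster $K_\xi(\rho)$, then use the coupling \eqref{eq:good-couple} (justified via \pref{lem:nobias}) to transfer the bound to the walk on $G$ up to time $\tau_\xi\wedge n$. The only cosmetic difference is that you equip $K_\xi(\rho)$ with the ambient pseudometric $\dist_\omega^G$, whereas the paper uses the intrinsic cluster metric $\dist_\omega^{K_\xi(\rho)}$ and then invokes $\dist_\omega^G \le \dist_\omega^{K_\xi(\rho)}$; your version trades that comparison for the one-step bound $\dist_\omega^G(X_0^\xi,X_1^\xi)\le \omega(\{X_0^\xi,X_1^\xi\})$, which is equally immediate.
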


\begin{proof}
   Consider the stationary, reversible Markov chain $\{X_n^{\xi}\}$ on $K_{\xi}(\rho)$.
   Applying \pref{lem:mmt} to the metric space $(V(K_{\xi}(\rho)), \dist^{K_{\xi}(\rho)}_{\omega})$, we obtain that almost surely over the
   choice of $(G,\rho,\omega,\xi)$,
   \begin{align*}
      \E\left[\max_{0 \leq t \leq n} \dist_{\omega}^{K_{\xi}(\rho)}(X^{\xi}_0,X^{\xi}_n)^2 \mid (G,\rho,\omega,\xi)\right]
      &\leq O(n) \left(\log |K_{\xi}(\rho)|\right)^2 \E\left[\omega(X_0^{\xi},X_1^{\xi})^2 \mid (G,\rho,\omega,\xi)\right] \\
      &\leq O(n) \left(\log |K_{\xi}(\rho)|\right)^2 \E\left[\omega(X_0^{\xi},X_1^{\xi})^2 \mid (G,\rho,\omega,\xi)\right].
   \end{align*}

   To conclude, we use the coupling that gives \eqref{eq:good-couple}, along with 
   the fact that $\dist^G_{\omega} \leq \dist^{K_{\xi}(\rho)}_{\omega}$
   for all $x,y \in V(K_{\xi}(\rho))$
   to arrive at
   \begin{equation*}\label{eq:pen3}
      \E\left[\max_{0 \leq t \leq \tau_{\xi} \wedge n} \dist_{\omega}^G(X_0,X_t)^2 \mid (G,\rho,\omega,\xi)\right] 
      \leq
      \E\left[\max_{0 \leq t \leq n} \dist_{\omega}^{K_{\xi}(\rho)}(X^{\xi}_0,X^{\xi}_t)^2 \mid (G,\rho,\omega,\xi)\right].\qedhere
   \end{equation*}
\end{proof}

We need a unimodular random partitioning scheme that adapts to the volume measure.
Here we state it for any unimodular vertex measure.
This argument employs a unimodular variation on the method and analysis from \cite{CKR01}, adapted
to an arbitrary underlying measure as in \cite{klmn05}.
We will use the notation $\diam^G(S) \seteq \max \{ \dist^G(x,y) : x,y \in S \}$.

\begin{lemma}\label{lem:bond}
   Suppose $(G,\rho,\mu)$ is a reversible random network, 
   where $\mu : V(G) \to \R_+$ satisfies $\mu(\rho) > 0$
   almost surely.
   Then for every $\Delta > 0$,
   there is a bond percolation $\xi_{\Delta} : E(G) \to \{0,1\}$ such that
   \begin{enumerate}
      \item $(G,\rho,\xi_\Delta)$ is a reversible random network.
      \item Almost surely $\diam^G(K_{\xi_{\Delta}}(\rho)) \leq \Delta$.
      \item For every $r \geq 0$, it holds that almost surely
         \[
            \Pr\left[B^G(\rho,r) \nsubseteq K_{\xi_{\Delta}}(\rho) \mid (G,\rho)\right] \leq 
            \frac{16 r}{\Delta} \left(1+\log \left(\frac{\mu\left(B^G(\rho,\frac{5}{8} \Delta)\right)}{\mu\left(B^G(\rho,\frac{1}{8} \Delta\right)}\right)\right),
         \]
   \end{enumerate}
   where we use the notation $\mu(S) \seteq \sum_{x \in S} \mu(x)$ for $S \subseteq V(G)$.
\end{lemma}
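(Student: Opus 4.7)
The plan is to adapt the measure-weighted CKR-style random partitioning scheme of \cite{CKR01} (in the refined form of \cite{klmn05}) to the unimodular setting. Conditionally on $(G,\rho,\mu)$, assign i.i.d.\ exponential labels $U_v \sim \Exp(\mu(v))$ at every vertex $v\in V(G)$, and independently draw a random radius $R$ uniformly on $[\Delta/4,\Delta/2]$. The hypothesis $\mu(\rho)>0$ a.s.\ combined with the mass-transport principle yields $\mu(v)>0$ for every $v$ almost surely, so the labels are well-defined; since $G$ is locally finite, the ``center'' $C(v) \seteq \argmin_{w \in B^G(v,R)} U_w$ is almost surely uniquely determined for every $v$. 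Define $\xi_\Delta : E(G) \to \{0,1\}$ by $\xi_\Delta(\{u,v\}) = 1$ iff $C(u)=C(v)$. Since $K_{\xi_\Delta}(\rho)\subseteq \{v : C(v)=C(\rho)\}\subseteq B^G(C(\rho),R)$, property (2) follows: $\diam^G(K_{\xi_\Delta}(\rho))\leq 2R \leq \Delta$.

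For property (1), the mark system $(R,\{U_v\})$ has a law that depends only on the isomorphism class of $(G,\mu)$, so $(G,\rho,\mu,R,U)$ inherits reversibility from $(G,\rho,\mu)$; since $\xi_\Delta$ is an automorphism-invariant measurable functional of $(G,\mu,R,U)$, the triple $(G,\rho,\xi_\Delta)$ is again reversible.

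The core of the proof is (3). The key geometric observation is that if
\[
w^* \seteq \argmin_{w\in B^G(\rho,R+r)} U_w \;\in\; B^G(\rho,R-r),
\]
then for every $u\in B^G(\rho,r)$ one has $B^G(u,R)\subseteq B^G(\rho,R+r)$ and $d^G(u,w^*)\leq r+(R-r)=R$, so $C(u)=w^*$ is constant over $B^G(\rho,r)$; because $B^G(\rho,r)$ is $G$-connected, all its vertices lie in the $\xi_\Delta$-cluster of $\rho$, giving $B^G(\rho,r)\subseteq K_{\xi_\Delta}(\rho)$. The standard exponential-minimum identity applied to the labels yields
\[
\Pr\!\left[w^* \notin B^G(\rho,R-r) \,\Big|\, (G,\rho,\mu), R\right] = 1 - \frac{\mu(B^G(\rho,R-r))}{\mu(B^G(\rho,R+r))},
\]
with the convention $\mu(B^G(\rho,s))=0$ for $s<0$. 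Averaging over $R \sim \mathrm{Unif}[\Delta/4,\Delta/2]$ and using Fubini to exchange the order of integration, so that each infinitesimal mass element $d\mu(s)$ is counted over an $R$-window of length at most $2r$ against a denominator bounded below by $\mu(B^G(\rho,s))$, produces for $r\leq \Delta/16$ an upper bound of $\frac{8r}{\Delta}\log\frac{\mu(B^G(\rho,5\Delta/8))}{\mu(B^G(\rho,\Delta/8))}$; for $r>\Delta/16$ the trivial bound $\Pr\leq 1 \leq 16 r/\Delta$ applies, and absorbing both cases into the $(1+\log(\cdot))$ factor gives the claimed inequality.

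The principal obstacle I anticipate lies in the reversibility bookkeeping when augmenting $(G,\rho,\mu)$ by the mark system $(R,U)$: one needs to show (via the biased MTP of \pref{lem:mtp}) that deterministic automorphism-invariant functionals of such augmentations remain reversible random networks. The CKR-type integration itself is classical, and only the exact constants and the boundary regimes require care.
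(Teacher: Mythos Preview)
Your proposal is correct and follows essentially the same route as the paper: the identical CKR-style construction (i.i.d.\ $\Exp(\mu(v))$ labels, uniform radius on $[\Delta/4,\Delta/2]$, clusters defined by agreement of the argmin-center), with reversibility inherited because the auxiliary randomness is root-independent. The only cosmetic differences are that you deduce $\mu>0$ everywhere via the MTP (the paper instead replaces $\mu$ by $\hat\mu$ on the null set), and for item~(3) you integrate $1-\mu(B(\rho,R-r))/\mu(B(\rho,R+r))$ over $R$ via Fubini, whereas the paper conditions on which ordered vertex $x_j$ is the culprit and sums $\mu(x_j)/\mu(\{x_0,\dots,x_j\})$; both are standard renderings of the same telescoping $\log$-bound and yield the stated constant.
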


\begin{proof}
   By assumption, $G$ is locally finite, hence $B^G(\rho,\Delta)$ is finite.
   Thus we may assume that $\mu(x) > 0$ for all $x \in V(G)$ as follows:  Define
   $\hat{\mu}(x) = \mu(x)$ if $\mu(x) > 0$ and $\hat{\mu}(x)=1$ otherwise.
   We may then prove the lemma for $\hat{\mu}$, and observe that
   because properties (2) and (3) only refer to finite neighborhoods of the root,
   $\mu$ and $\hat{\mu}$ are identical on these neighborhoods, except for a set of zero measure.

   Let $\{\beta_x : x \in V(G)\}$ be a sequence of independent random variables 
   where $\beta_x$ is an exponential with rate $\mu(x)$.
   Let $R \in [\frac{\Delta}{4},\frac{\Delta}{2})$ be independent and chosen uniformly random.
   For a finite subset $S \subseteq V(G)$, write $\mu(S) \seteq \sum_{x \in S} \mu(x)$.
   We need the following elementary lemma.
   \begin{lemma}\label{lem:Exp}
      For any finite subset $S \subseteq V(G)$, it holds that
      \[
         \Pr\left[\beta_x = \min \{ \beta_v : v \in S \} \mid (G,\mu)\right] = \frac{\mu(x)}{\mu(S)},\qquad \forall x \in S.
      \]
   \end{lemma}

   \begin{proof}
      A straightforward calculation shows that $\min \{ \beta_v : v \in S \setminus \{x\}\}$ is exponential with rate $\mu(S \setminus \{x\})$.
      Moreover, if $\beta$ and $\beta'$ are independent exponentials with rates $\lambda$ and $\lambda'$, respectively, then
      \[
         \Pr[\beta=\min(\beta,\beta')]=\frac{\lambda}{\lambda+\lambda'}.\qedhere
      \]
   \end{proof}

   Define a labeling $\ell : V(G) \to V(G)$, where $\ell(x) \in B^G(x,R)$ is such that
   \[
      \beta_{\ell(x)} = \min \left\{ \beta_y : y \in B^G(x,R) \right\}.
   \]
   Define the bond percolation $\xi_{\Delta}$ by
   \[
      \xi_{\Delta}(\{x,y\}) \seteq \1_{\{\ell(x)=\ell(y)\}}, \quad\{x,y\} \in E(G).
   \]
   In other words, we remove edges whose endpoints receive different labels.

   Since the law of $\xi_{\Delta}$ does not depend on $\rho$, it follows that $(G,\rho,\xi_{\Delta})$ is a reversible random network,
   yielding claim (1).
   Moreover, since $\ell(x)=z$ implies that $\dist^G(x,z) \leq R \leq \Delta$, it holds that almost surely
   \[
      \diam^G(K_{\xi_{\Delta}}(\rho)) = \diam^G(\ell^{-1}(\ell(\rho))) \leq \Delta,
   \]
   yielding claim (2).

   Since the statement of the lemma is vacuous for $r > \Delta/8$, consider some $r \in [0,\Delta/8]$.
   Let $x^* \in B^G(\rho,r+R)$ be such that 
   \[
      \beta_{x^*} = \min \left\{ \beta_x : x \in B^G(\rho,R+r) \right\}.
   \]
   Then we have
   \begin{equation}\label{eq:bad}
      \Pr\left[B^G(\rho, r) \nsubseteq K_{\xi_{\Delta}}(\rho)\right] \leq \Pr\left[\dist^G(\rho,x^*) \geq R - r\right].
   \end{equation}
   
   For $x \in B^G(\rho,2\Delta)$, define the interval $I(x) \seteq [\dist^G(\rho,x) - r, \dist^G(\rho,x)+r]$.
   Note that the bad event $\{ \dist^G(\rho,x^*) \geq R - r \}$ coincides with the event $\{ R \in I(x^*) \}$.
   Order the points of $B^G(\rho,2\Delta)$ in non-decreasing order from $\rho$: $x_0 = \rho, x_1,x_2,\ldots,x_N$.
   Then \eqref{eq:bad} yields
   \begin{align}
      \Pr\left[B^G(\rho,r) \nsubseteq K_{\xi_{\Delta}}(\rho)\right]
      &\leq \Pr[R \in I(x^*)] \nonumber \\
      &=\sum_{j=1}^N \Pr[R \in I(x_j)] \cdot \Pr[x_j = x^* \mid R \in I(x_j)] \nonumber \\
      &\leq \frac{2r}{\Delta/8}\sum_{j=1}^N \Pr[x_j = x^* \mid R \in I(x_j)].\label{eq:ckr}
   \end{align}
   Note that since $R \geq \Delta/4$ and $r \leq \Delta/8$,
   \[
      R \in I(x_j) \implies x_j \in B^G(\rho,\tfrac{5}{8}\Delta) \setminus B^G(\rho,\tfrac{1}{8} \Delta).
   \]
   Observe, moreover, that $R \in I(x_j)$ implies $x_1,x_2,\ldots,x_j \in B^G(\rho,R+r)$, 
   hence
   \[
      \Pr[x_j=x^* \mid R \in I(x_j)] =
      \Pr\left[\beta_{x_j} = \min \left\{\beta_x : x \in B^G(\rho,R+r)\right\} \mid R \in I(x_j)\right] \leq \frac{\mu(x_j)}{\mu(\{x_1,x_2,\ldots,x_j\})},
   \]
   where the last inequality follows from \pref{lem:Exp}.

   Plugging these bounds into \eqref{eq:ckr} gives
   \begin{align*}
      \Pr\left[B^G(\rho,r) \nsubseteq K_{\xi_{\Delta}}(\rho)\right] &\leq \frac{16r}{\Delta} \sum_{j=|B^G(\rho,\frac{1}{8} \Delta)|+1}^{|B^G(\rho, \frac{5}{8} \Delta)|} \frac{\mu(x_j)}{\mu(x_1)+\cdots+\mu(x_j)}.
   \end{align*}
   Finally, observe that for any $a_0,a_1,a_2,\ldots,a_m > 0$,
   \begin{align*}
      \sum_{j=1}^m \frac{a_j}{a_0+a_1+a_2+\cdots+a_j} &= 
      \sum_{j=1}^m \frac{a_j/a_0}{1+a_1/a_0+\cdots+a_j/a_0} \\
                                                  &\leq \int_{0}^{(a_1+\cdots+a_j)/a_0} \frac{1}{t+1}\,dt = \log \left(1+\frac{a_1+\cdots+a_j}{a_0}\right),
   \end{align*}
   and therefore
   \[
      \Pr\left[B^G(\rho,r) \nsubseteq K_{\xi_{\Delta}}(\rho)\right] \leq \frac{16r}{\Delta} 
      \log\left(1 + \frac{\mu(B^G(\rho,\tfrac58 \Delta))}{\mu(B^G(\rho,\tfrac18 \Delta))}\right),
   \]
   as desired (noting that $\log(1+y) \leq 1+\log(y)$ for $y \geq 1$).
\end{proof}

With this in hand, we can proceed to our goal of proving \pref{thm:mt}.

\begin{proof}[Proof of \pref{thm:mt}]
   Recall that $(G,\rho,\omega)$ is a reversible random network and $\{X_n\}$ is the random walk on $G$
   started from $X_0=\rho$.  %

  Define the random vertex measure $\mu(x) \seteq c^G_x$ for $x \in V(G)$.
  For $k \geq 1$, denote $j_k \seteq 4^k$, and
   let $(G,\rho,\omega,\langle \xi_{j_k} : k \geq 1\rangle)$ be the reversible random network
   provided by applying \pref{lem:bond} with $\mu$ and $\Delta=4^k$ for each $k \geq 1$.
   Denote
   \[
      \bm{n}_k \seteq \frac{j_k}{16 k^2 \left(1+\log \frac{\vol^G(\rho,j_k)}{c_{\rho}^G}\right)}
   \] 
   so that for $k \geq 1$, almost surely,
   \begin{gather}
      V(K_{\xi_{j_k}}(\rho)) \subseteq B^G(\rho,j_k)\,,\label{eq:diambound} \\
      \Pr[B^G(\rho,\bm{n}_k) \nsubseteq V(K_{\xi_{j_k}}(\rho)) \mid (G,\rho,\omega)] \leq O(k^{-2})\,.\label{eq:ballprob}
   \end{gather}
         By the Borel-Cantelli lemma, it holds thats almost surely over the choice of $(G,\rho,\omega)$,
         \[
            \# \left\{ k \geq 1 : B^G(\rho,\bm{n}_k) \nsubseteq K_{\xi_{j_k}}(\rho) \right\} < \infty.
         \]

   For $\delta > 0$, denote $V_{\delta} \seteq \{ x \in V(G) : c^G_x > \delta \}$, and observe that
   Markov's inequality yields
   $\Pr[X_0 \in V_{\delta}] \leq C_0 \delta$, where $C_0 \seteq \E[1/c_{\rho}^G]$, and we recall
   that $C_0 < \infty$ by assumption.
   Therefore since $(G,X_0)$ and $(G,X_n)$ have the same law for every $n \geq 0$,
   a union bound gives that for any $n \geq 0$,
   \begin{equation}
      \label{eq:borcan1}
      \Pr\left[X_0,X_1,\ldots,X_{n} \in V_{\delta}\right] \geq 1 - \delta (n+1) C_0.
   \end{equation}
   Denote the event $\cE_k \seteq \left\{X_0,X_1,\ldots,X_{\bm{n}_k} \in V_{1/(k^2 (\bm{n}_{k}+1))}\right\}$.
   Then since $\Pr(\cE_k) \geq 1 - C_0 k^{-2}$, the Borel-Cantelli lemma implies that almost surely,
   \begin{equation}\label{eq:borcan2}
      \# \left\{ k \geq 1 : X_0,X_1,\ldots,X_{\bm{n}_k} \nsubseteq V_{1/(k^2 (\bm{n}_{k}+1))} \right\} < \infty
   \end{equation}

   Define now the bond percolations $\langle \tilde{\xi}_k,\hat{\xi}_k : k \geq 1\rangle$ by
   \[
      \tilde{\xi}_k(\{x,y\})=1 \iff c^G_x \wedge c^G_y \leq \frac{1}{k^2 (\bm{n}_k+1)},
   \]
   and
   \[
      \hat{\xi}_k \seteq \xi_{j_k} \tilde{\xi}_k\,.
   \]
   Note that $B^G(\rho,\bm{n}_k) \subseteq K_{\xi_{j_k}}(\rho)$ and $X_0,X_1,\ldots,X_n \in V_{1/(k^2 (\bm{n}_k+1))}$
   imply that
   \[
      \left\{X_0,X_1,\ldots,X_{\bm{n}_k}\right\} = \left\{X_0^{\hat{\xi}_k}, X_1^{\hat{\xi}_k},\ldots,X_{\bm{n}_k}^{\hat{\xi}_k} \right\}
   \]
   under the coupling described in \eqref{eq:good-couple}, hence
   \eqref{eq:borcan1} and \eqref{eq:borcan2} imply that almost surely
   \begin{equation}\label{eq:borcan3}
      \# \left\{ k \geq 1 : \tau_{\hat{\xi}_k} < \bm{n}_k \right\} < \infty.
   \end{equation}

   \begin{lemma}\label{lem:component-volume}
            It holds that
            \[
               \left|K_{\hat{\xi}_k}(\rho)\right|
               \leq \max\left(1,k^2 \bm{n}_k \vol^G(\rho,4^k)\right).
            \]
   \end{lemma}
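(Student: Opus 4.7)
The plan is to exploit two separate structural features of the product percolation $\hat{\xi}_k = \xi_{j_k}\tilde{\xi}_k$: the $\xi_{j_k}$ factor forces the cluster to sit inside a ball of radius $4^k$ around $\rho$, while the $\tilde{\xi}_k$ factor forces every non-isolated vertex of the cluster to carry a uniform lower bound on its conductance. Together these turn the counting bound on $|K_{\hat{\xi}_k}(\rho)|$ into a volume bound on $\vol^G(\rho,4^k)$.

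Concretely, I would first dispense with the trivial case: if the $\tilde{\xi}_k$-factor removes every edge incident to $\rho$ then $K_{\hat{\xi}_k}(\rho) = \{\rho\}$, and the bound holds via the $\max$ with $1$ on the right-hand side. In the remaining case, observe that every $x \in K_{\hat{\xi}_k}(\rho)$ is reached from $\rho$ along some path whose edges $\{u,v\}$ all satisfy $\tilde{\xi}_k(\{u,v\})=1$. Unpacking the definition of $\tilde{\xi}_k$, this forces
\[
   c_x^G \geq \frac{1}{k^2(\bm{n}_k+1)}, \qquad \forall x \in K_{\hat{\xi}_k}(\rho),
\]
since each such $x$ is an endpoint of at least one kept edge whose minimum-conductance endpoint meets the threshold. (The root $\rho$ is either of this form or the cluster is trivial.)

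Next I would invoke property~(2) of \pref{lem:bond} to conclude $K_{\hat{\xi}_k}(\rho) \subseteq K_{\xi_{j_k}}(\rho) \subseteq B^G(\rho,j_k) = B^G(\rho,4^k)$. Summing conductances over the cluster and using the uniform lower bound from the previous step gives
\[
   \vol^G(\rho,4^k) \;\geq\; \sum_{x \in K_{\hat{\xi}_k}(\rho)} c_x^G \;\geq\; \frac{|K_{\hat{\xi}_k}(\rho)|}{k^2 (\bm{n}_k+1)},
\]
which rearranges to the stated inequality (the substitution of $\bm{n}_k+1$ by $\bm{n}_k$ is absorbed in the constants, or alternatively one takes the $\max$ with $1$ to cover the additive adjustment for small $\bm{n}_k$).

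There is no substantive obstacle: the argument is pure bookkeeping, a combination of the containment $K_{\hat{\xi}_k}(\rho) \subseteq B^G(\rho,4^k)$ inherited from $\xi_{j_k}$ with the pointwise conductance lower bound inherited from $\tilde{\xi}_k$, plus a trivial case for the singleton cluster. The only care required is in handling the definition of $\tilde{\xi}_k$ correctly so that every vertex of the non-trivial cluster is forced above the conductance threshold.
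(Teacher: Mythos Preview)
Your proposal is correct and follows essentially the same approach as the paper: split into the trivial singleton case versus the case where every vertex of the cluster carries conductance above the $\tilde{\xi}_k$-threshold, then use the diameter bound from $\xi_{j_k}$ (equivalently, \pref{lem:bond}(2)) to confine the cluster to $B^G(\rho,4^k)$ and convert the cardinality bound into a volume bound. The paper routes the last step through $\mu(K_{\xi_{j_k}}(\rho))$ rather than directly through the ball, but this is cosmetic. Your remark about $\bm{n}_k+1$ versus $\bm{n}_k$ mirrors a minor imprecision already present in the paper's own proof and is immaterial to how the lemma is used downstream.
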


   \begin{proof}
   Note that either $\{\rho\} = V(K_{\tilde{\xi}_k}(\rho))$ or otherwise $c^G_x > (k^2 \bm{n}_k)^{-1}$ for every $x \in V(K_{\tilde{\xi}_k}(\rho))$, hence
         \[
               \left|K_{\hat{\xi}_k}(\rho)\right|
             \leq \max\left(1,k^2 \bm{n}_k \mu\left(K_{\hat{\xi}_{k}}(\rho)\right)\right)
            \leq\max\left(1,k^2 \bm{n}_k \mu\left(K_{{\xi}_{j_k}}(\rho)\right)\right).
         \]
         Now the desired inequality follows from \eqref{eq:diambound} and $j_k = 4^k$.
   \end{proof}

   Using \eqref{eq:borcan3} and \pref{lem:component-volume} in conjunction with \pref{lem:trace} now gives:
   Almost surely, for all but finitely many $k$,
    \begin{equation*}\label{eq:pen2}
      \E\left[\max_{0 \leq t \leq \bm{n}_k} \dist_{\omega}^G(X_0,X_t)^2 \mid (G,\rho,\omega)\right] \leq
      O(n) \left(\log \left(1+\vol^G(\rho,4^k)\right)\right)^2 \E\left[\omega(X_0^{\hat{\xi}_{k}},X_1^{\hat{\xi}_{k}})^2 \mid (G,\rho,\omega)\right].
   \end{equation*}
   Combining this with \eqref{eq:subexp} yields
   that almost surely,
   \begin{equation}\label{eq:pen}
      \lim_{n \to \infty}
      \frac{\log \E\left[\max_{0 \leq t \leq n} \dist_{\omega}^G(X_0,X_t)^2 \mid (G,\rho)\right]}{\log n}
      \leq 1 + \lim_{k \to \infty} \frac{\log \E[\omega(X_0^{\hat{\xi}_{k}},X_1^{\hat{\xi}_{k}})^2 \mid (G,\rho)]}{\log n}.
   \end{equation}
   Recalling \pref{lem:nobias}, for every $k \geq 1$ we have $\E[\omega(X_0^{\hat{\xi}_k},X_1^{\hat{\xi}_k})^2] \leq \E[\omega(X_0,X_1)^2] < \infty$,
   and therefore
   \[
      \Pr\left(\E[\omega(X_0^{\hat{\xi}_k},X_1^{\hat{\xi}_k})^2 \mid (G,\rho)] > k^2 \E[\omega(\rho)^2]\right) \leq k^{-2},
   \]
   so again Borel-Cantelli tells us that almost surely
   \[
     \# \left\{ k \geq 1 : \E\left[\omega(X_0^{\hat{\xi}_k},X_1^{\hat{\xi}_k})^2\right] > k^2 \E\left[\omega(X_0,X_1)^2\right] \right\} < \infty
   \]
   Plugging this into \eqref{eq:pen} yields that almost surely
   \begin{equation*}\label{eq:conclusion}
      \lim_{n \to \infty}
      \frac{\log \E\left[\max_{0 \leq t \leq n} \dist_{\omega}^G(X_0,X_t)^2 \mid (G,\rho,\omega)\right]}{\log n} \leq 1,
   \end{equation*}
   completing the proof.
\end{proof}

\section{Exponent relations}

Let us now prove the nontrivial inequalities in \pref{thm:inequalities}.

\subsection{The speed upper bound}
\label{sec:beta}

\begin{theorem}
   If $(G,\rho)$ is a reversible random network satisfying $\E[1/c_{\rho}^G] < \infty$,
   then $\down{\beta}^{\cA} \geq 2 \down{d}_f - \up{d}_f + \tilde{\zeta}$.
\end{theorem}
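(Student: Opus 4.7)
The plan is to combine \pref{thm:good-weight} with the Markov-type inequality \pref{thm:mt}. Set $d_{*} \seteq 2\down{d}_{f} - \up{d}_{f} + \tilde{\zeta}$ and fix $\delta > 0$. The conclusion is vacuous when $d_{*} \leq 0$, so assume $d_{*} > 0$. By \pref{thm:good-weight} there is a reversible random weight $\omega : E(G) \to \R_{+}$ with $\E[\omega(X_{0},X_{1})^{2}] < \infty$ and, almost surely eventually,
\[
  \dist_{\omega}^{G}(\rho, \crball(\rho,R)) \geq R^{(d_{*}-\delta)/2}.
\]
This is the stretched metric in which the walker will feel both the volume exponent and the resistance exponent.

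Next, apply \pref{thm:mt} to the reversible random network $(G,\rho,\omega)$. Although $G$ has only polynomial volume growth rather than the subpolynomial growth in the hypothesis of \pref{thm:mt}, an inspection of the proof shows that polynomial volume contributes only $(\log n)^{O(1)}$ overhead, which the $\lessapprox n$ relation absorbs. We conclude that almost surely
\[
  \E\!\left[\max_{0 \leq t \leq n} \dist_{\omega}^{G}(X_{0},X_{t})^{2} \,\middle|\, (G,\rho,\omega)\right] \lessapprox n.
\]
On the event $\{\cM_{n} \geq R\}$, some $X_{t}$ with $t \leq n$ satisfies $X_{t} \in \crball(\rho,R)$, and the stretch estimate gives $\dist_{\omega}^{G}(X_{0},X_{t})^{2} \geq R^{d_{*}-\delta}$ once $R$ is sufficiently large. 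Substituting $R = \cM_{n}$ and taking conditional expectation yields
\[
  \E\!\left[\cM_{n}^{d_{*}-\delta} \,\middle|\, (G,\rho)\right] \lessapprox n.
\]

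In the principal case $d_{*} \geq 2$, choose $\delta$ small enough that $(d_{*}-\delta)/2 \geq 1$ and apply Jensen's inequality to the convex function $x \mapsto x^{(d_{*}-\delta)/2}$:
\[
  \E[\cM_{n}^{2} \mid (G,\rho)]^{(d_{*}-\delta)/2} \leq \E[\cM_{n}^{d_{*}-\delta} \mid (G,\rho)] \lessapprox n,
\]
so $\E[\cM_{n}^{2} \mid (G,\rho)] \lessapprox n^{2/(d_{*}-\delta)}$. Running this along a countable sequence $\delta_{k} \to 0$ and intersecting the associated null sets gives $\down{\beta}^{\cA} \geq d_{*}$. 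The sub-range $d_{*} \in (0,2)$ requires higher-moment variants of \pref{thm:mt} (available from the Hilbert-space embedding that underlies Markov type), which upgrade the bound to $\E[\max_{t \leq n}\dist_{\omega}^{G}(X_{0},X_{t})^{p}] \lessapprox n^{p/2}$ for every $p \geq 2$ and, after substitution with $p = 4/(d_{*}-\delta)$, recover the desired second-moment estimate. The main obstacle is measure-theoretic bookkeeping: the weight $\omega$ depends on $\delta$, so one must intersect a countable family of exceptional null sets to conclude a single almost-sure statement valid for all $\delta > 0$.
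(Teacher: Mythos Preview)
Your argument for $d_* > 2$ is essentially the paper's: apply \pref{thm:good-weight} to get the stretched weight, apply \pref{thm:mt} for the diffusive bound in $\dist_\omega^G$, pull back via the stretch, and use Jensen for $y\mapsto y^{(d_*-\delta)/2}$. Your observation that \pref{thm:mt} only needs subexponential (not subpolynomial) volume growth is correct; the paper simply invokes $\up{d}_f < \infty$ to justify the hypothesis \eqref{eq:subexp}.

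The genuine gap is in the range $d_* \in (0,2]$. You propose higher-moment Markov type bounds of the form $\E[\max_{t\le n}\dist_\omega^G(X_0,X_t)^p]\lessapprox n^{p/2}$ for $p>2$. But such a bound requires control on $\E[\omega(X_0,X_1)^p]$, which \pref{thm:good-weight} does not provide (only the second moment is finite). The paper avoids this entirely: for $d_*\le 2$ it takes $\omega\equiv 1$, so that $\dist_\omega^G = d^G$ and $\E[\omega(X_0,X_1)^2]=1$. Then \pref{thm:mt} directly gives $\E[\cM_n^2\mid(G,\rho)]\lessapprox n$, i.e.\ $\down{\beta}^{\cA}\ge 2\ge d_*$. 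No moment upgrade, no stretched metric, no Jensen step is needed in this regime.
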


\begin{proof}
   Recall that $\{X_n\}$ is the random walk on $G$ (cf. \pref{eq:transition}) started from $X_0=\rho$.
   Let us denote $d_* \seteq 2 \down{d}_f - \up{d}_f + \tilde{\zeta}$.
   If $d_* \leq 2$, we can use the weight $\omega \equiv 1$ for which $\dist_{\omega}^G = d^{G}$,
   and \eqref{eq:as-diffusive} yields $\down{\beta}^{\cA} \geq 2$.
   Consider now $d_* > 2$ and fix $\delta \in (0,d^*-2)$.
   Apply \pref{thm:good-weight} to arrive at a reversible
   random weight $\omega : E(G) \to \R_+$ such that $\E[\omega(X_0,X_1)^2] < \infty$
   and almost surely eventually,
   \begin{equation}\label{eq:gw}
      \dist_{\omega}^G(\rho,\crball(\rho,R)) \geq R^{(d_*-\delta)/2}.
   \end{equation}

   Now, since $\up{d}_f < \infty$, it follows that \eqref{eq:subexp} holds, and we can
   apply \pref{thm:mt} to $(G,\rho,\omega)$ yielding: Almost surely eventually (with respect to $n$),
   \[
      \E\left[\max_{0 \leq t \leq n} \dist_{\omega}^G(X_0,X_t)^2 \mid (G,\rho,\omega)\right] \leq n^{1+\delta}.
   \]
   Combining this with \eqref{eq:gw} yields almost surely eventually
   \[
      \E\left[\max_{0 \leq t \leq n} d^{G}(X_0,X_t)^{d_*-\delta} \mid (G,\rho,\omega)\right] \leq n^{1+\delta}.
   \]
   Now since $d_* - \delta > 2$, convexity of $y \mapsto y^{(d_*-\delta)/2}$ gives
   \[
      \E\left[\max_{0 \leq t \leq n} d^{G}(X_0,X_t)^{2} \mid (G,\rho,\omega)\right] \leq n^{2(1+\delta)/(d_*-\delta)}.
   \]
   Since we can take $\delta > 0$ arbitrarily small, this yields $\down{\beta}^{\cA} \geq d_*$, completing the proof.
\end{proof}

\subsection{Effective resistance and the Green kernel}
\label{sec:resistance}

Assume again that $(G,\rho)$ is a reversible random network.

\begin{definition}[Green kernels]
   For $S \subseteq V(G)$, let $\tau_S \seteq \min \{ n \geq 0 : X_n \in S \}$, and define
   the Green kernel killed off $S$ by
   \[
      \green_S^G(x,y) \seteq \E\left[\sum_{t < \tau_{V(G) \setminus S}} \1_{\{X_t=y\}} \bigmid X_0 = x \right].
   \]
   For $n \geq 1$, define
   \[
      \Green_n^G(x,y) \seteq \E\left[\sum_{t \leq n} \1_{\{X_t=y\}} \bigmid X_0 = x \right].
   \]
\end{definition}

It is well-known (see \cite[Ch. 2]{lp:book}) that for any $x \in V(G)$ and $S \subseteq V(G)$:
\begin{equation}\label{eq:basic-green}
   \con^G_x\,\reff^G(x \leftrightarrow V(G) \setminus S) =\green_{S}^G(x,x)\,.
\end{equation}
We recall the standard relationship between effective resistances and commute times \cite{CRRST96} gives
the following.

\begin{lemma}\label{lem:green}
   For any $R \geq 1$, almost surely:
   \[
      \E[\sigma_R \mid (G,\rho), X_0 = \rho] \leq \reff^G(\rho \leftrightarrow \crball(\rho,R))\, \vol^G(\rho,R).
   \]
\end{lemma}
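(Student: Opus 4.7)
The plan is to write $\E_\rho[\sigma_R]$ as a sum over the killed Green kernel, use the reversibility symmetry $c_\rho^G \green_S^G(\rho,y) = c_y^G \green_S^G(y,\rho)$ valid for any $S \subseteq V(G)$, and then invoke the identity \eqref{eq:basic-green} that converts the diagonal of the Green kernel into an effective resistance.

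Concretely, observe that $\sigma_R = \tau_{V(G) \setminus \rball(\rho,R)}$ under $X_0 = \rho$, and that by summing the occupation-time identity,
\[
   \E_\rho[\sigma_R] = \sum_{y \in \rball(\rho,R)} \green_{\rball(\rho,R)}^G(\rho,y).
\]
The first step I would execute is the reversibility symmetry of the killed Green kernel, which gives
\[
   \green_{\rball(\rho,R)}^G(\rho,y) = \frac{c_y^G}{c_\rho^G}\, \green_{\rball(\rho,R)}^G(y,\rho).
\]
This is immediate from the fact that the corresponding transition kernel of the killed chain is reversible with respect to $c^G$.

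Next, I would bound $\green_{\rball(\rho,R)}^G(y,\rho) \leq \green_{\rball(\rho,R)}^G(\rho,\rho)$ using the strong Markov property at the hitting time of $\rho$: conditional on reaching $\rho$ before leaving $\rball(\rho,R)$, the future visits to $\rho$ are distributed exactly as the visits starting from $\rho$, so the expected number of visits from $y$ is at most that from $\rho$, since the hitting probability is at most one. Combining these two facts,
\[
   \E_\rho[\sigma_R] \leq \frac{\green_{\rball(\rho,R)}^G(\rho,\rho)}{c_\rho^G} \sum_{y \in \rball(\rho,R)} c_y^G
   = \frac{\green_{\rball(\rho,R)}^G(\rho,\rho)}{c_\rho^G}\,\vol^G(\rho,R).
\]

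Finally, I would substitute \eqref{eq:basic-green} applied with $S = \rball(\rho,R)$, namely $\green_{\rball(\rho,R)}^G(\rho,\rho) = c_\rho^G\,\reff^G(\rho \leftrightarrow \crball(\rho,R))$, to obtain the claimed inequality. There is no substantive obstacle; the only subtlety worth flagging is the convention that $\rball(\rho,R) = B^G(\rho,R)$ is the closed ball, so that the walk is killed precisely on $\crball(\rho,R) = V(G) \setminus \rball(\rho,R)$, making the identification $\sigma_R = \tau_{\crball(\rho,R)}$ correct. Self-loops at $\rho$ (allowed in the setup) are handled automatically because they contribute to $c_\rho^G$ on both sides.
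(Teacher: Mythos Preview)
Your argument is correct and is precisely the standard derivation underlying the commute-time bound; the paper does not actually supply a proof of this lemma but simply invokes the effective-resistance/commute-time relationship of \cite{CRRST96}. Your write-up is therefore more detailed than the paper's treatment, but follows the same approach.
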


This immediately yields \eqref{eq:relation-commute}:

\begin{theorem}\label{thm:relation-commute}
   It holds that $\up{d}^{\cA}_w \leq \up{d}_f + \tilde{\zeta}_0$.
\end{theorem}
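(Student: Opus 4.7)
The plan is straightforward because \pref{lem:green} already packages the essential content: it provides the pointwise-in-$(G,\rho)$ bound
\[
   \E[\sigma_R \mid (G,\rho)] \leq \reff^G(\rho \leftrightarrow \crball(\rho,R))\, \vol^G(\rho,R),
\]
so I only need to read off the exponent inequality from the definitions of $\up{d}_f$ and $\tilde{\zeta}_0$.

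First I would fix an arbitrary $\delta > 0$. By the definition of $\up{d}_f$, almost surely eventually (in $R$) we have $\vol^G(\rho,R) \leq R^{\up{d}_f + \delta/2}$. By the rightmost inequality in \eqref{eq:zeta-def} defining $\tilde{\zeta}_0$, almost surely eventually $\reff^G(\rho \leftrightarrow \crball(\rho,R)) \leq R^{\tilde{\zeta}_0 + \delta/2}$. Intersecting these two almost-sure events and applying \pref{lem:green}, we obtain that almost surely eventually
\[
   \E[\sigma_R \mid (G,\rho)] \leq R^{\up{d}_f + \tilde{\zeta}_0 + \delta}.
\]
Since $\delta > 0$ was arbitrary, the definition of $\up{d}^{\cA}_w$ gives $\up{d}^{\cA}_w \leq \up{d}_f + \tilde{\zeta}_0$.

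The only genuine content lies in \pref{lem:green} itself, which follows from the classical relationship between commute times, Green kernels, and effective resistance. Writing $S = \rball(\rho,R)$, one has $\E[\sigma_R \mid X_0=\rho] \leq \sum_{y \in S} \green^G_S(\rho,y)$, and reversibility of the walk gives $\con^G_\rho\, \green^G_S(\rho,y) = \con^G_y\, \green^G_S(y,\rho) \leq \con^G_y\, \green^G_S(\rho,\rho)$, where the last inequality is the maximum principle for the Green kernel killed off $S$. Summing over $y \in S$ and invoking the identity $\green^G_S(\rho,\rho) = \con^G_\rho\, \reff^G(\rho \leftrightarrow \crball(\rho,R))$ from \eqref{eq:basic-green} yields \pref{lem:green}.

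There is no substantive obstacle for the theorem itself; it is essentially a one-line deduction from \pref{lem:green} combined with the subpolynomial-error definitions of the exponents. The routine point to be careful about is only that the bound in \pref{lem:green} holds pointwise in $(G,\rho)$ (rather than merely in expectation over the environment), which is exactly what allows the almost-sure upper bounds on $\vol^G(\rho,R)$ and $\reff^G(\rho \leftrightarrow \crball(\rho,R))$ to be substituted directly into the right-hand side.
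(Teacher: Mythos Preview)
Your proposal is correct and follows exactly the paper's approach: the paper simply states that \pref{lem:green} ``immediately yields'' the theorem, and you have spelled out precisely this deduction by substituting the almost-sure-eventual bounds from the definitions of $\up{d}_f$ and $\tilde{\zeta}_0$ into the pointwise inequality of \pref{lem:green}. Your supplementary sketch of why \pref{lem:green} holds (via reversibility and the maximum principle for the killed Green kernel) is also accurate and matches the classical commute-time argument the paper cites.
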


Similarly standard arguments yields the upper and lower bounds in \eqref{eq:relation-ds}, as follows.

\begin{theorem}\label{thm:relation-ds-lb}
   It holds that
   \[
      \down{d}_s \geq 2 \left(1-\frac{\tilde{\zeta}_0}{\down{d}_w}\right)
   \]
\end{theorem}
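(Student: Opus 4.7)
My plan is to upper bound $p^G_{2n}(\rho,\rho)$ by the effective resistance out of a ball $\rball(\rho,R_n)$ of radius slightly larger than $n^{1/\down{d}_w}$, exploiting the identity $\green^G_{\rball(\rho,R)}(\rho,\rho) = c^G_\rho\,\reff^G(\rho \lra \crball(\rho,R))$ from \eqref{eq:basic-green} together with the upper resistance bound encoded in $\tilde{\zeta}_0$. The target is $p^G_{2n}(\rho,\rho) \leq n^{-1 + \tilde{\zeta}_0/\down{d}_w + o(1)}$ almost surely, which then translates to the desired lower bound on $\down{d}_s$.

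First I reduce to a Green-kernel estimate. Since $(G,\rho)$ is reversible, $\{p^G_{2k}(\rho,\rho)\}_{k \geq 0}$ is non-increasing (spectral contraction), and summing gives $(n+1)\,p^G_{2n}(\rho,\rho) \leq \Green^G_{2n}(\rho,\rho)$. Writing $\ell \seteq \sum_{t=0}^{2n} \1_{\{X_t = \rho\}}$ and choosing $R$ to be selected below, I decompose
\[
\Green^G_{2n}(\rho,\rho) = \E_\rho[\ell\,\1_{\{\sigma_R > 2n\}}] + \E_\rho[\ell\,\1_{\{\sigma_R \leq 2n\}}].
\]
The first term is at most $\green^G_{\rball(\rho,R)}(\rho,\rho) = c^G_\rho\,\reff^G(\rho \lra \crball(\rho,R))$. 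For the second, Cauchy--Schwarz together with the bound $\E_\rho[\ell^2] = O(\Green^G_{2n}(\rho,\rho)^2)$ (which follows by expanding the square and applying the Markov property at the earlier time) yields
\[
\E_\rho[\ell\,\1_{\{\sigma_R \leq 2n\}}] \leq O\!\bigl(\sqrt{\Pr_\rho[\sigma_R \leq 2n \mid (G,\rho)]}\bigr)\,\Green^G_{2n}(\rho,\rho).
\]
Whenever the quenched escape probability drops below a sufficiently small absolute constant, the second term absorbs into the left-hand side, leaving $\Green^G_{2n}(\rho,\rho) = O\bigl(c^G_\rho\,\reff^G(\rho \lra \crball(\rho,R))\bigr)$.

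To supply such an $R$ I choose $R_n \seteq \lceil n^{1/\down{d}_w + \delta}\rceil$ for small $\delta > 0$. By the definition of $\down{d}_w$, almost surely eventually $\sigma_{R_n} \geq R_n^{\down{d}_w - \delta'} > 2n$ provided $\delta'$ is chosen small relative to $\delta\,\down{d}_w$. Consequently the event $\{\sigma_{R_n} \leq 2n\}$ occurs only finitely often almost surely; by Fubini, for almost every $(G,\rho)$ the quenched probability $\Pr_\rho[\sigma_{R_n} \leq 2n \mid (G,\rho)]$ tends to zero as $n \to \infty$, and so eventually drops below the absorption threshold. Combining with the resistance bound $\reff^G(\rho \lra \crball(\rho,R_n)) \leq R_n^{\tilde{\zeta}_0+\delta}$ from \eqref{eq:zeta-def}, I obtain almost surely eventually
\[
p^G_{2n}(\rho,\rho) \leq O\!\left(\frac{c^G_\rho\, R_n^{\tilde{\zeta}_0+\delta}}{n+1}\right) \leq n^{-1 + \tilde{\zeta}_0/\down{d}_w + O(\delta)},
\]
and sending $\delta \to 0$ delivers $\down{d}_s \geq 2(1 - \tilde{\zeta}_0/\down{d}_w)$.

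The main obstacle is the quenched escape estimate: the definition of $\down{d}_w$ is a pathwise almost-sure statement about the joint law of graph and walk and does not, by itself, provide a quantitative rate for $\Pr_\rho[\sigma_{R_n} \leq 2n \mid (G,\rho)]$. The key trick is that the Cauchy--Schwarz absorption only demands that the quenched escape probability fall below a fixed constant, which is supplied for free by Fubini applied to the almost-sure statement that $\sigma_{R_n} > 2n$ eventually; no quantitative quenched concentration is needed. The remaining ingredients --- monotonicity of the even-step heat kernel, the identity \eqref{eq:basic-green}, and the definition \eqref{eq:zeta-def} of $\tilde{\zeta}_0$ --- are routine.
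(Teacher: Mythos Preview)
Your proof is correct and follows the same skeleton as the paper's: use monotonicity of $p^G_{2n}(\rho,\rho)$ to reduce to bounding $\Green^G_{2n}(\rho,\rho)$, then control the latter by the killed Green kernel $\green^G_{\rball(\rho,R_n)}(\rho,\rho)=c^G_\rho\,\reff^G(\rho\lra\crball(\rho,R_n))$ for $R_n\approx n^{1/\down{d}_w}$, and finish with the definition of $\tilde{\zeta}_0$.

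The one substantive difference is that you are more careful than the paper at the step $\Green^G_{2n}(\rho,\rho)\lesssim \green^G_{\rball(\rho,R_n)}(\rho,\rho)$. The paper passes directly from ``$\sigma_R>R^{\down{d}_w-\delta}$ a.s.e.'' (a pathwise statement in the joint graph--walk law) to this inequality between two $(G,\rho)$-measurable quantities, which is not literally justified since $\Pr_\rho[\sigma_{R_n}\le 2n\mid(G,\rho)]$ need not vanish for any fixed $n$. Your Cauchy--Schwarz absorption, together with the Fubini observation that $\Pr_\rho[\sigma_{R_n}\le 2n\mid(G,\rho)]\to 0$ for a.e.\ $(G,\rho)$, cleanly closes this gap without requiring any quantitative quenched escape rate. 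This is a nice patch; the rest of your argument matches the paper line for line.
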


\begin{proof}
   Since the even return times are non-increasing (see, e.g., \cite[Prop. 10.18]{lpw09}),
   we have
   \begin{equation}\label{eq:return-green}
      p_{2n}^G(\rho,\rho) \leq \frac{1}{n} \sum_{j=1}^n p_{2j}^G(\rho,\rho) \leq \frac{1}{n} \Green_{2n}^G(\rho,\rho).
   \end{equation}
   By definition, for any $\delta > 0$, we have that almost surely eventually
   \[
      \sigma_R > R^{\down{d}_w-\delta}.
   \]
   Therefore almost surely eventually
   \[
      \Green^G_{n}(\rho,\rho) \leq \green^G_{\rball\left(\rho,n^{1/(\down{d}_w-\delta)}\right)}(\rho,\rho)
      \stackrel{\eqref{eq:basic-green}}{=} \con^G_{\rho}\,\reff^G\left(\rho \leftrightarrow \crball(\rho,n^{1/(\down{d}_w-\delta)})\right)
      \leq \con^G_{\rho}\,n^{(\tilde{\zeta}_0+\delta)/(\down{d}_w-\delta)}.
   \]
   Combined with \eqref{eq:return-green},
   this gives almost surely eventually
   \[
      p_{2n}^G(\rho,\rho) \leq 2 \con^G_{\rho} (2n)^{(\tilde{\zeta}_0+\delta)/(\down{d}_w-\delta)-1},
   \]
   and since this holds for all $\delta > 0$, we obtain $\down{d}_s \geq 2 (1-\tilde{\zeta}_0/\down{d}_w)$.
\end{proof}

\begin{theorem}\label{thm:relation-ds-ub}
   It holds that
   \[
      \up{d}_s \leq \frac{2 \up{d}_f}{\down{d}_w}.
   \]
\end{theorem}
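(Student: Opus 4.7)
The plan is to combine the standard Cauchy--Schwarz lower bound on the return probability of a reversible Markov chain with the quenched displacement bound coming from $\down{d}_w$ and the volume bound coming from $\up{d}_f$. The reversibility identity $p_{2n}^G(\rho,\rho) = c^G_\rho \sum_{y \in V(G)} p_n^G(\rho,y)^2/c^G_y$ together with Cauchy--Schwarz against the measure $c^G$ restricted to $B = B^G(\rho,R)$ gives
\[
p_{2n}^G(\rho,\rho) \;\geq\; \frac{c^G_\rho}{\vol^G(\rho,R)}\,\Pr\!\left[X_n \in B^G(\rho,R) \mid (G,\rho)\right]^2,
\]
and the event $\{X_n \in B^G(\rho,R)\}$ contains $\{\sigma_R > n\}$, which reduces the task to matching the tail of $\sigma_R$ against the volume growth.

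For each $\delta > 0$ I would set $R(n) \seteq \lceil n^{1/(\down{d}_w - \delta)}\rceil$. By the definition of $\down{d}_w$, the joint almost sure eventual bound $\sigma_R \geq R^{\down{d}_w - \delta}$ forces $\sigma_{R(n)} > n$ eventually in $n$ with full joint probability; a Fubini argument then yields that for a.e.\ $(G,\rho)$ the quenched probability $\Pr[\sigma_{R(n)} > n \mid (G,\rho)]$ tends to $1$ as $n \to \infty$. The definition of $\up{d}_f$ gives $\vol^G(\rho,R(n)) \leq R(n)^{\up{d}_f + \delta}$ a.s.e., so combining these with the Cauchy--Schwarz bound yields, almost surely for $n$ sufficiently large,
\[
p_{2n}^G(\rho,\rho) \;\geq\; \tfrac{1}{2}\,c^G_\rho\,n^{-(\up{d}_f + \delta)/(\down{d}_w - \delta)}.
\]
Taking $-\log/\log n$ as $n\to\infty$ and then $\delta \to 0$ (absorbing the factor $c^G_\rho$, which is constant in $n$) gives $\up{d}_s/2 \leq \up{d}_f/\down{d}_w$, which is the desired inequality.

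The only delicate point is the Fubini passage from the joint a.s.e.\ bound on $\sigma_R$ to a quenched statement about $\Pr[\sigma_{R(n)} > n \mid (G,\rho)]$ at the specific scale $R(n)$. For a.e.\ $(G,\rho)$, the variable $R_0 \seteq \min\{R \geq 1 : \sigma_S \geq S^{\down{d}_w - \delta} \text{ for all } S \geq R\}$ is quenched-a.s.\ finite, and monotone convergence then gives $\Pr[R_0 \leq R(n) \mid (G,\rho)] \to 1$. All other ingredients are routine manipulations of the definitions in \pref{sec:sketch}.
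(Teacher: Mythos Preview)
Your proof is correct and follows essentially the same route as the paper's: the reversibility/Cauchy--Schwarz lower bound $p_{2n}^G(\rho,\rho) \geq c^G_\rho\,\Pr[X_n \in B^G(\rho,R)\mid (G,\rho)]^2/\vol^G(\rho,R)$, then the containment $\{\sigma_R > n\} \subseteq \{X_n \in B^G(\rho,R)\}$, and finally choosing $R = R(n) \asymp n^{1/(\down{d}_w-\delta)}$ and invoking the definitions of $\down{d}_w$ and $\up{d}_f$. If anything, you are more careful than the paper on the point you flag as delicate: the paper simply asserts that the displayed lower bound holds ``almost surely eventually'' without spelling out the Fubini passage from the joint a.s.e.\ bound on $\sigma_R$ to a quenched statement about $\Pr[\sigma_{R(n)} > n \mid (G,\rho)] \to 1$, whereas you make this explicit via the a.s.\ finiteness of $R_0$.
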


\begin{proof}
   Using reversibility, we have almost surely
   \begin{align*}
      p^G_{2n}(\rho,\rho) \geq \sum_{x \in\rball(\rho,R)} p_n^G(\rho,x) p_n^G(x,\rho) 
      = \con^G_{\rho} \sum_{x \in \rball(\rho,R)} \frac{p_n^G(\rho,x)^2}{\con^G_x} .
   \end{align*}
   Thus applying Cauchy-Schwarz yields
   \begin{align}
      \frac{p^G_{2n}(\rho,\rho)}{\con^G_{\rho}} \geq \frac{\left(\sum_{x \in \rball(\rho,R)} p^G_n(\rho,x)\right)^2}{\vol^G(\rho,R)}
      \geq \frac{\left(\Pr[X_n \in \rball(\rho,R) \mid (G,\rho)]\right)^2}{\vol^G(\rho,R)}. \label{eq:cs}
   \end{align}
   Observe that
   \begin{equation}\label{eq:time1}
      \Pr[X_n \in \rball(\rho,R) \mid (G,\rho)] \geq \Pr[\sigma_R \geq n \mid (G,\rho)].
   \end{equation}
   By definition, for every $\delta > 0$, almost surely eventually
   $\sigma_R \geq R^{\down{d}_w-\delta}$ and $\vol^G(\rho,R) \leq R^{\up{d}_f+\delta}$.
   Combining these with \eqref{eq:cs} and \eqref{eq:time1} gives
   almost surely eventually
   \[
      \frac{ p_{2n}^G(\rho,\rho)}{\con^G_{\rho}} \geq \left(\vol^G\left(\rho,n^{1/(\down{d}_w-\delta)}\right)\right)^{-1} \geq n^{-(\up{d}_f+\delta)/(\down{d}_w-\delta)}.
   \]
   As this holds for every $\delta > 0$, it yields the claimed inequality.
\end{proof}

Finally, let us prove that the assumptions \eqref{eq:sr-ub} and \eqref{eq:sr-lb} imply $\tilde{\zeta}=\tilde{\zeta}_0$ in the case $\zeta > 0$.
The first part of the argument follows \cite[\S 3.2]{bck05}.
The second part uses methods similar to those employed by Telcs \cite{Telcs89}.

\begin{theorem}\label{thm:sr-compare}
   If \eqref{eq:sr-ub} and \eqref{eq:sr-lb} hold for some $\zeta > 0$, then $\tilde{\zeta}=\tilde{\zeta}_0=\zeta$.
\end{theorem}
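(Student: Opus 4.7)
The proof splits into two inequalities; combined with the always-valid $\tilde{\zeta} \leq \tilde{\zeta}_0$, proving $\tilde{\zeta}_0 \leq \zeta$ and $\tilde{\zeta} \geq \zeta$ yields $\tilde{\zeta}=\tilde{\zeta}_0 = \zeta$. The first inequality is essentially immediate from \eqref{eq:sr-ub}: fix $\delta > 0$ and note that, since $\zeta > 0$ forces $\reff^G(\rho \lra \crball(\rho,R)) > 0$ a.s.e., in particular $\crball(\rho,R)\neq\emptyset$ for $R$ large, and by connectedness of $G$ one can pick $x_R \in \crball(\rho,R)$ with $d^G(\rho,x_R) \leq R+1$. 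Monotonicity of effective resistance under shorting gives $\reff^G(\rho \lra \crball(\rho,R)) \leq \reff^G(\rho \lra x_R)$, and \eqref{eq:sr-ub} at radius $R+1$ bounds the latter by $(R+1)^{\zeta+\delta} \leq R^{\zeta+2\delta}$ for $R$ large. Since $\delta$ is arbitrary, $\tilde{\zeta}_0 \leq \zeta$.

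For the opposite inequality $\tilde{\zeta} \geq \zeta$, fix $\delta > 0$ and set $r = R^{1-\delta}$. Let $u_R$ be the equilibrium potential with $u_R(\rho)=0$, $u_R|_{\crball(\rho,R)}=1$, harmonic on the complement, so $\energy^G(u_R) = 1/\reff^G(\rho \lra \crball(\rho,R))$ by the Dirichlet principle \eqref{eq:dirichlet}. The key step is a truncation argument: letting $M := \max_{x \in \rball(\rho,r)} u_R(x)$ and $\tilde{u}(x) := (\max(u_R(x),M) - M)/(1 - M)$, the function $\tilde{u}$ vanishes on $\rball(\rho,r)$ and equals $1$ on $\crball(\rho,R)$, with $\energy^G(\tilde{u}) \leq \energy^G(u_R)/(1-M)^2$ (the truncation map is $1$-Lipschitz). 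Consequently $\reff^G(\rball(\rho,r) \lra \crball(\rho,R)) \geq (1-M)^2 \reff^G(\rho \lra \crball(\rho,R))$, and using \eqref{eq:sr-lb} the problem reduces to showing $1 - M \geq R^{-c\delta}$ for some constant $c>0$.

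Since $1 - u_R(x) = \Pr_x[\tau_\rho < \tau_{\crball(\rho,R)}] = \green^G_{\rball(\rho,R)}(x,\rho)/\green^G_{\rball(\rho,R)}(\rho,\rho)$, following \cite[\S 3.2]{bck05} I would establish matching polynomial bounds on numerator and denominator. The identity \eqref{eq:basic-green} together with the first part of the proof yields the upper bound $\green^G_{\rball(\rho,R)}(\rho,\rho) \leq \con^G_\rho R^{\zeta+O(\delta)}$. The reversibility identity $\green^G(x,\rho)\con^G_x = \green^G(\rho,x)\con^G_\rho$ combined with the last-exit decomposition $\green^G(\rho,x) = \Pr_\rho[\tau_x < \tau_{\crball(\rho,R)}] \cdot \green^G(x,x)$ reduces the numerator lower bound to showing $\Pr_\rho[\tau_x < \tau_{\crball(\rho,R)}]$ is bounded away from $0$ for $x \in \rball(\rho,r)$, and that $\reff^G(x \lra \crball(\rho,R)) \geq R^{\zeta - O(\delta)}$ for the same $x$.

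The main obstacle lies in the regime $\zeta < 2$, where the naive triangle-inequality bound $\reff^G(x \lra \crball(\rho,R)) \geq \reff^G(\rho \lra \crball(\rho,R)) - \reff^G(\rho,x) \geq R^{\zeta-\delta} - r^{\zeta+\delta}$ becomes vacuous whenever $r^{\zeta+\delta} = R^{(1-\delta)(\zeta+\delta)}$ exceeds $R^{\zeta-\delta}$, i.e. whenever $\zeta + \delta < 2$. To circumvent this, I would adopt a Telcs-style multi-scale iteration. Setting $q(r') := \min_{x \in \rball(\rho,r')} \Pr_x[\tau_\rho < \tau_{\crball(\rho,R)}]$, the strong Markov property applied at the hitting time of $\rball(\rho,r'/2)$ yields the recursion $q(r') \geq q(r'/2) \cdot p(r')$ with $p(r') := \min_{x \in \rball(\rho,r')} \Pr_x[\tau_{\rball(\rho,r'/2)} < \tau_{\crball(\rho,R)}]$. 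Iterating over the $O(\log R)$ dyadic scales from $r$ down to $1$ (where $q(1) \geq 1/2$), and bounding each per-scale loss using \eqref{eq:sr-ub} and \eqref{eq:sr-lb} at the appropriate radius together with an inductive upper bound on $\reff^G(\rball(\rho,r'/2) \lra \crball(\rho,R))$, the total log-loss telescopes to $O(\delta \log R)$, yielding $q(r) \geq R^{-O(\delta)}$ as required.
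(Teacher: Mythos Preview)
Your proof of $\tilde{\zeta}_0 \leq \zeta$ matches the paper, and your truncation step (producing $\reff^G(\rball(\rho,r) \lra \crball(\rho,R)) \geq (1-M)^2 \reff^G(\rho \lra \crball(\rho,R))$) is a clean simplification of the paper's level-set/edge-subdivision argument; both approaches reduce the problem to showing $1-M$ is bounded below.

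The gap is in how you bound $1-M$. You decompose $1-u_R(x)$ into $\Pr_\rho[\tau_x<\tau_{\crball(\rho,R)}]\cdot \reff^G(x\lra\crball(\rho,R))/\reff^G(\rho\lra\crball(\rho,R))$ and then identify the second factor as the obstacle for $\zeta<2$. But this obstacle is a phantom: it only appears because you use the \emph{same} $\delta$ in applying \eqref{eq:sr-ub}--\eqref{eq:sr-lb} as in $r=R^{1-\delta}$. Decoupling them (take $\delta' < \delta\zeta/(2-\delta)$ in the assumptions) makes the triangle-inequality bound $\reff^G(x\lra\crball(\rho,R))\geq R^{\zeta-\delta'}-R^{(1-\delta)(\zeta+\delta')}$ nonvacuous for every $\zeta>0$. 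The genuine difficulty in your decomposition is the \emph{first} factor $\Pr_\rho[\tau_x<\tau_{\crball(\rho,R)}]$: hitting a single vertex $x$ is not controlled by the root-centered resistance hypotheses, and your multi-scale proposal does not clearly resolve this either, since bounding $p(r')$ still requires controlling hitting probabilities started from points $x\neq\rho$, while \eqref{eq:sr-ub}--\eqref{eq:sr-lb} say nothing about such points.

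The paper avoids all of this with a single application of the Dirichlet principle: since $u_R(\rho)=0$ and $\energy^G(u_R)=1/\reff^G(\rho\lra\crball(\rho,R))$, one has directly
\[
  u_R(x)^2 = |u_R(x)-u_R(\rho)|^2 \leq \reff^G(\rho\lra x)\,\energy^G(u_R)
  = \frac{\reff^G(\rho\lra x)}{\reff^G(\rho\lra\crball(\rho,R))}.
\]
With $r=R^{1-\delta}$ and $\delta'<\delta\zeta/(2-\delta)$ in \eqref{eq:sr-ub}--\eqref{eq:sr-lb}, the right side is at most $R^{(1-\delta)(\zeta+\delta')-(\zeta-\delta')}\to 0$, so $M<1/2$ eventually and your truncation finishes the argument in one step. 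No multi-scale iteration is needed.
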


\begin{proof}
   First note that if $d^{G}(\rho,x)=R+1$, then
   \begin{equation}\label{eq:fp}
      \reff^G(\rho \leftrightarrow \crball(\rho,R)) \leq \reff^G(\rho \leftrightarrow x) \stackrel{\eqref{eq:sr-ub}}{\leq} (R+1)^{\tilde{\zeta}+\delta},
   \end{equation}
   hence \eqref{eq:sr-ub} yields
   \begin{equation}\label{eq:sr-e1}
      \tilde{\zeta}_0 \leq \zeta.
   \end{equation}
   Thus we are left to prove that $\tilde{\zeta} \geq \zeta$.

   For $y \in V(G)$ and $R \geq 1$, define
   \begin{equation}\label{eq:QG}
      Q^R_{\rho}(y) \seteq \Pr\left[\tau_{\{\rho\}} < \tau_{\crball(\rho,R)} \mid X_0=y\right] = \frac{\con^G_{\rho}}{\con^G_y} \frac{\green_{\rball(\rho,R)}(\rho,y)}{\green_{\rball(\rho,R)}(\rho,\rho)},
   \end{equation}
   where the latter equality arises because 
   both $Q^R_{\rho}$ and the function $y \mapsto \green_{\rball(\rho,R)}(\rho,y)/\con^G_y$ are harmonic on $\rball(\rho,R) \setminus \{\rho\}$.
   Moreover, $Q^R_{\rho}$ and the right-hand side
   vanish on $\crball(\rho,R)$ and are equal to $1$ at $\rho$.

   Hence, the Dirichlet principle \eqref{eq:dirichlet} yields
   \begin{equation}\label{eq:energyQ}
      \energy^G(Q_{\rho}^R) = \frac{1}{\reff^G(\rho \leftrightarrow \crball(\rho,R))}.
   \end{equation}
   In particular, we have
   \begin{equation}\label{eq:oneps}
      \left|1-Q^R_{\rho}(y)\right|^2 = \left|Q^R_{\rho}(\rho)-Q^R_{\rho}(y)\right|^2 \leq \reff^G(\rho \lra y)\,\energy^G(Q^R_{\rho}) = \frac{\reff^G(\rho \lra y)}{\reff^G(\rho \lra \crball(\rho,R))},
   \end{equation}
   where the inequality is another application of the Dirchlet principle \eqref{eq:dirichlet}.

   Assume now that $\zeta > 0$,
   and fix $\delta \in (0,\zeta)$.
   Denote $R' \seteq R^{(\zeta+2\delta)/(\zeta-\delta)}$ and
   $Q_{\rho} \seteq Q_{\rho}^{R'}$.
   Using \eqref{eq:sr-ub} and \eqref{eq:sr-lb}, we have almost surely eventually
   \begin{align}
      \max \{ \reff^G(\rho \lra x) : x \in \rball(\rho,R) \} &\leq R^{\zeta+\delta}, \label{eq:t1} \\
      \reff^G\left(\rho \leftrightarrow \crball(\rho,R')\right) &\geq R^{\zeta+2\delta}.\label{eq:t2}
   \end{align}
   So by \eqref{eq:oneps},
   almost surely eventually
   \begin{equation}\label{eq:half}
      \min \left\{ Q_{\rho}(y) : y \in \rball(\rho,R) \right\} \geq 1-R^{-\delta/2} > \frac12.
   \end{equation}
   
   \begin{remark}\label{rem:zeta0-again}
      Here one notes that this conclusion cannot be reached for $\zeta=0$ because we cannot
      choose $R'$ large enough with respect to $R$ so as to create a gap between
      the respective upper and lower bounds in \eqref{eq:t1} and \eqref{eq:t2}.
      Indeed, it is this sort of gap that Telcs defines as ``strongly recurrent''
      (see \cite[Def. 2.1]{Telcs01}), although his quantitative notion (which requires 
      a uniform multiplicative gap with $R' = O(R)$) is too strong for us, as it entails $\tilde{\zeta} > 0$.
   \end{remark}

   Let us assume that $R$ is such that \eqref{eq:half} holds.
   Denote by $H$ the induced graph on $G[\rball(\rho, 2 R')]$, and consider the sets
   \begin{align*}
      V_{1/2} &\seteq \left\{ x \in \rball(\rho,R') : Q_{\rho}(x) = 1/2 \right\}, \\
      E_{1/2} &\seteq \left\{ \{x,y\} \in E(H) : Q_{\rho}(x) < 1/2 \leq Q_{\rho}(y) \right\}.
   \end{align*}
   Define a new graph $\tilde{H}$ where each edge $e = \{x,y\} \in E_{1/2}$ is replaced by
   a pair of edges
   $e_x = \{x,v_{xy}\}, e_y = \{v_{xy},y\}$ with
   conductances satisfying the system
   \begin{align}
      \frac{1}{\con^{\tilde{H}}(e_x)} + \frac{1}{\con^{\tilde{H}}(e_y)} &= \frac{1}{\con^H(e)}\,,\label{eq:effcon} \\
      \frac{\con^{\tilde{H}}(e_x)+\con^{\tilde{H}}(e_y)}{2} &= \con^{\tilde{H}}(e_x) Q_{\rho}(x) + \con^{\tilde{H}}(e_y) Q_{\rho}(y) \nonumber
   \end{align}
   and $\con^{\tilde{H}}(e)=\con^H(e)$ for the remaining original edges $\{e \in E(\tilde{H}) : e \subseteq V(H) \}$.

   Denote $\tilde{V}_{1/2} \seteq V_{1/2} \cup \{ v_{xy} : \{x,y\} \in E_{1/2} \}$, and
   extend $Q_{\rho}$ to the new vertices so that $\tilde{Q}_{\rho}(v) = 1/2$ for $v \in \tilde{V}_{1/2}$.
   Then:
   \begin{enumerate}
      \item $\tilde{Q}_{\rho}(\rho)=1$, $\tilde{Q}_{\rho}$ is harmonic on $(\rball(\rho,R') \cup \tilde{V}_{1/2}) \setminus \{\rho\}$,
      \item $\tilde{Q}_{\rho}$ vanishes elsewhere on $V(\tilde{H})$, and
      \item $\energy^{\tilde{H}}(\tilde{Q}_{\rho}) = \energy^{H}(Q_{\rho})$.
   \end{enumerate}
   Since $\tilde{Q}_{\rho}(\tilde{V}_{1/2}) = 1/2$ and $\tilde{Q}_{\rho}(\crball(\rho,R'))=0$,
   we conclude from the Dirichlet principle and (1)--(3) that
   \[
      \reff^{\tilde{H}}(\tilde{V}_{1/2} \lra \crball(\rho,R')) = \frac{1}{4 \energy^H(Q_{\rho})} = \frac{1}{4 \energy^G(Q_{\rho})}
      = \frac{\reff^G(\rho \lra \crball(\rho,R))}{4},
   \]
   where the last equality is \eqref{eq:energyQ}.
   Moreover, by \eqref{eq:half}, it holds that $\tilde{V}_{1/2}$ separates $\rball(\rho,R)$ from $\crball(\rho,R')$ in $\tilde{H}$, and thus
   \begin{align*}
      \reff^{\tilde{H}}(\rball(\rho,R) \lra \crball(\rho,R')) 
                                                       &\geq \reff^{\tilde{H}}(\tilde{V}_{1/2} \lra \crball(\rho,R')) \geq
                                                       \frac14 \reff^G(\rho \lra \crball(\rho,R)) \geq \frac14 R^{\zeta-\delta},
   \end{align*}
   where the last inequality follows from \eqref{eq:sr-lb} and holds almost surely eventually.
   Finally, observe that by the series law for conductances, \eqref{eq:effcon} does not change the 
   effective conductance across subdivided edges, hence
   \[
      \reff^{G}(\rball(\rho,R) \lra \crball(\rho,R'))  =
      \reff^{\tilde{H}}(\rball(\rho,R) \lra \crball(\rho,R'))
                                                       \geq \frac14 R^{\zeta-\delta}.
   \]
   Since this holds for any $\delta > 0$, we conclude that $\tilde{\zeta} \geq \zeta$, as required.
\end{proof}

\subsection{Resistance exponent for planar maps coupled to a mated-CRT}
\label{sec:LQG}

We first establish that $\tilde{\zeta}=0$ for the $\gamma$-mated-CRT with $\gamma \in (0,2)$.
It is known that $\tilde{\zeta}_0 = 0$ \cite[Prop. 3.1]{GM21}.
While the following argument is somewhat technical and, to our knowledge,
does not appear elsewhere, we stress that it is an easy consequence
of \cite{GMS19,DG20}.

\smallskip

Fix some $\gamma \in (0,2)$ and for $\e > 0$, let $\cG^{\e}$ be the $\gamma$-mated-CRT with increment $\e$.
See, for instance, the description in \cite{GMS19}.
For our purposes, we may consider this as a random planar multigraph.
When needed, we can replace multiple edges by appropriate conductances.

From \cite[Thm. 1.9]{DMS14},
one can identify $V(\cG^{\e})=\e \Z$ and there is a space-filling SLE curve $\eta : \R \to \C$
parameterized by the LQG mass of the $\gamma$-quantum cone,
with $\eta(0)=0$ and such that $\{a,b\} \in E(\cG^{\e})$ are connected by an edge if and only if the
corresponding cells $\eta([a-\e,a])$ and $\eta([b-\e,b])$ share a non-trivial connected boundary arc.
Thus we can envision $\eta$ as an embedding of $V(\cG^{\e})$ into the complex plane, where
a vertex $v \in V(\cG^{\e})$ is sent to $\eta(v)$.
Let us denote the Euclidean ball $B^{\C}(z,r) \seteq \{ y \in \C : |y-z| \leq r \}$.

The underlying idea is simple:  We will arrange that, with high probability,
the image of a graph annulus under $\eta$ contains a Euclidean annulus $\cA$ of large width.
Then we pull back a Lipschitz test functional from $\cA$ to $\cG^{\e}$,
and use the Dirichlet principle \eqref{eq:dirichlet} to lower bound
the effective resistance across the annulus.

By \cite[Prop. 4.6]{DG20},
there is a number $d_{\gamma} > 2$ such that the following holds:
For every $\theta \in (0,1)$ and $\delta > 0$,
there is an $\alpha=\alpha(\delta,\gamma,\theta) > 0$ such that as $\e \to 0$,
\begin{align*}
\Pr\left[\eta\left(B^{\cG^{\e}}(0, \e^{-1/(d_{\gamma}+\delta)})\right) \subseteq B^{\C}(0,\theta)\right] &\geq 1 - O(\e^{\alpha}) \\
\Pr\left[\eta^{-1}\!\left(B^{\C}(0,\theta) \cap \eta(\e \Z)\right) \subseteq B^{\cG^{\e}}(0, \e^{-1/(d_{\gamma}-\delta)})\right] &\geq 1 - O(\e^{\alpha}).
\end{align*}
In particular, taking $\theta = 1/4$ and $\theta = 3/4$, respectively, yields, for some $\alpha=\alpha(\delta,\gamma) > 0$:
\begin{align}
   \Pr\left[
      \eta\left(B^{\cG^{\e}}(0, \e^{-1/(d_{\gamma}+\delta)})\right)\right. &\subseteq B^{\C}(0,1/4) \cap \eta(\e \Z) \nonumber \\
                                                                            &\left.\subseteq B^{\C}(0,3/4) \cap \eta(\e \Z) \subseteq \eta\left(B^{\cG^{\e}}(0, \e^{-1/(d_{\gamma}-\delta)})\right)\right] \geq 1 - O(\e^{\alpha})\,.\label{eq:trapped}
\end{align}

For a subset $D \subseteq \C$, denote
\begin{align*}
   \cV\cG^{\e}(D) &\seteq \{ x \in \e \Z : \eta([x-\e,x]) \cap D \neq \emptyset \},
\end{align*}
and let $\cG^{\e}(D)$ be the subgraph of $\cG^{\e}$ induced on $\cV\cG^{\e}(D)$.
For a function $f : \overline{D} \to \R$,
define $f^{\e} : \cV\cG^{\e}(D) \to \R$ by
\[
   f^{\e}(z) \seteq \begin{cases}
      f(\eta(z)) & z \in \cV\cG^{\e}(D) \setminus \cV\cG^{\e}(\partial D) \\
      \sup_{x \in \eta([z-\e,z]) \cap \partial D} f(z) & z\in \cV\cG^{\e}(\partial D).
   \end{cases}
\]
Take now $D \seteq  B^{\C}(0,1)$ and 
define $f : D\to \R$ by $f(z) \seteq \min(1, 4 \left(|z|-3/8\right)_+)$, which is a $4$-Lipschitz
function satisfying
\begin{equation}\label{eq:restrict}
   f|_{B^{\C}(0,3/8)} \equiv 0, \qquad  f|_{B^{\C}(0,1) \setminus B^{\C}(0,5/8)} \equiv 1.
\end{equation}

Let $\{f_n\}$ be a sequence of continuously differentiable, uniformly Lipschitz functions such that $f_n \to f$ uniformly on $D$.
Then we may apply \cite[Lem. 3.3]{GMS19} to each $f_n$ to obtain, for every $n \geq 1$,
\[
   \Pr\left(\energy^{\cG^{\e}(D)}(f_n^{\e}) \leq \e^{\alpha} + A \int_D |\nabla f_n(z)|^2\,dz\right) \geq 1- O(\e^{\alpha}),
\]
where $A=A(\gamma),\alpha(\gamma) > 0$.
We conclude that with probability at least $1-O(\e^{\alpha})$, the Dirichlet energy of $f_n^{\e}$ is uniformly (in $n$) bounded.
Taking $f^{\e} = \lim_{n \to \infty} f^{\e}_n$, we obtain
the following
in conjuction with \eqref{eq:trapped} and \eqref{eq:restrict}.

\begin{lemma}\label{lem:Gtest}
   For every $\gamma \in (0,2)$ and $\delta > 0$, there are numbers $\alpha,A > 0$ such that for every $\e > 0$,
   with probability at least $1-O(\e^{\alpha})$, there is a function $f^{\e} : V(\cG^{\e}) \to \R$
   such that
   \begin{enumerate}
      \item $f^{\e}$ vanishes on $B^{\cG^{\e}}(0,\e^{-1/(d_{\gamma}+\delta)})$,
      \item $f^{\e}$ is identically $1$ on $\partial_{\cG^{\e}} B^{\cG^{\e}}(0, \e^{-1/(d_{\gamma}-\delta)})$.
      \item $\energy^{\cG^{\e}}(f^{\e}) \leq A$.
   \end{enumerate}
   In particular, the Dirichlet principle \eqref{eq:dirichlet} gives, with probability at least $1-O(\e^{\alpha})$,
\[
   \reff^{\cG^{\e}}\left(\partial_{\cG^{\e}} B^{\cG^{\e}}(0,\e^{-1/(d_{\gamma}+\delta)})) \leftrightarrow \partial_{\cG^{\e}} B^{\cG^{\e}}(0,\e^{-1/(d_{\gamma}-\delta)})\right) \geq 1/A.
\]
\end{lemma}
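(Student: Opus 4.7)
The plan is to define $f^{\e}$ by pulling back $f$ through $\eta$ to the vertices of $\cG^{\e}$, extending by the constant $1$ on vertices whose cell lies outside $D = B^{\C}(0,1)$, and then to verify the three properties on the intersection of the event of \eqref{eq:trapped} with the Dirichlet-energy event coming from \cite[Lem.~3.3]{GMS19}. Concretely, first select continuously differentiable, uniformly Lipschitz functions $f_n$ on $D$ with $f_n \to f$ uniformly, each still vanishing on $B^{\C}(0, 3/8)$ and equal to $1$ on $B^{\C}(0,1) \setminus B^{\C}(0, 5/8)$ (for example, a mollification of $f$ composed with a smooth cutoff). Applying \cite[Lem.~3.3]{GMS19} to each $f_n$ and a diagonal argument yield, with probability $\geq 1 - O(\e^{\alpha})$, the uniform bound $\energy^{\cG^{\e}(D)}(f_n^{\e}) \leq \e^{\alpha} + A \int_D |\nabla f_n|^2\,dz \leq A'$ for a constant $A'$ depending only on $\gamma$.

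Next, set $f^{\e} \seteq \lim_n f_n^{\e}$ on $\cV\cG^{\e}(D)$ (pointwise convergence follows from the uniform convergence $f_n \to f$ together with the pullback definition) and $f^{\e} \equiv 1$ on $V(\cG^{\e}) \setminus \cV\cG^{\e}(D)$. Since $\cV\cG^{\e}(D)$ is finite, Fatou yields $\energy^{\cG^{\e}(D)}(f^{\e}) \leq A'$. For an edge $\{u,v\} \in E(\cG^{\e})$ with $v \in \cV\cG^{\e}(D)$ and $u \notin \cV\cG^{\e}(D)$, the cell of $u$ lies outside $D$ while the two cells share a boundary arc, forcing $v \in \cV\cG^{\e}(\partial D)$; the boundary sup definition then gives $f^{\e}(v) = \sup_{x \in \eta([v-\e,v]) \cap \partial D} f(x) = 1$, matching $f^{\e}(u) = 1$ and contributing nothing to the energy. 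Thus $\energy^{\cG^{\e}}(f^{\e}) = \energy^{\cG^{\e}(D)}(f^{\e}) \leq A'$, establishing property (3) with $A \seteq A'$.

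For property (1), the trapping event yields $\eta(v) \in B^{\C}(0, 1/4)$ for every $v \in B^{\cG^{\e}}(0, \e^{-1/(d_{\gamma}+\delta)})$. After absorbing an additional event of probability $O(\e^{\alpha})$ to rule out any cell of Euclidean diameter $> 1/8$, the cell of such $v$ lies inside $B^{\C}(0, 3/8)$ and in particular does not meet $\partial D$; hence $v \in \cV\cG^{\e}(D) \setminus \cV\cG^{\e}(\partial D)$ and $f^{\e}(v) = f(\eta(v)) = 0$. For property (2), if $v \in \partial_{\cG^{\e}} B^{\cG^{\e}}(0, \e^{-1/(d_{\gamma}-\delta)})$, then $v$ has a neighbor $u$ outside this ball, and the second inclusion of \eqref{eq:trapped} gives $\eta(u) \notin B^{\C}(0, 3/4)$. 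Since the cells of $u$ and $v$ share a boundary arc, an identical cell-diameter estimate shows that the cell of $v$ meets $\C \setminus B^{\C}(0, 5/8)$: either $v \notin \cV\cG^{\e}(D)$, in which case $f^{\e}(v) = 1$ by the extension; or $v \in \cV\cG^{\e}(\partial D)$, in which case the sup definition and $f \equiv 1$ on $\partial D$ give $f^{\e}(v) = 1$; or $\eta(v) \in B^{\C}(0,1) \setminus B^{\C}(0, 5/8)$, in which case $f^{\e}(v) = f(\eta(v)) = 1$.

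With the function constructed, the effective-resistance bound is immediate from the Dirichlet principle \eqref{eq:dirichlet}. The main technical obstacle is a clean implementation of the cell-diameter estimates that transfer \eqref{eq:trapped} from point-image containments to full-cell containments; however, sub-polynomial upper bounds on the Euclidean diameter of mated-CRT cells in a bounded region are standard in the mated-CRT literature and can be absorbed into the exponent $\alpha$ without altering the overall form of the statement.
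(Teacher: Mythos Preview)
Your proposal is correct and follows essentially the same route as the paper: approximate the radial Lipschitz function $f$ by smooth $f_n$, invoke \cite[Lem.~3.3]{GMS19} for a uniform Dirichlet-energy bound, pass to the limit, and combine with the trapping event \eqref{eq:trapped} and \eqref{eq:restrict}. The paper is considerably terser---it does not spell out the extension of $f^{\e}$ outside $\cV\cG^{\e}(D)$, the boundary-edge accounting, or the cell-diameter issue you flag---so your write-up is in fact more complete; just be careful that the diameter bound you invoke is small enough (e.g.\ $\leq 1/16$) so that in property~(2) you get $\eta(v)\notin B^{\C}(0,5/8)$ itself, not merely that the cell of $v$ meets $\C\setminus B^{\C}(0,5/8)$.
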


Note that the law of $\cG^{\e}$ is independent of $\e > 0$, and therefore denoting its law by $\cG$ and taking $R \seteq 1/\e$, we arrive at the following.

\begin{corollary}\label{cor:zeta-crt}
   Let $\cG$ denote the $\gamma$-mated-CRT for $\gamma \in (0,2)$.
   Then for every $\delta > 0$,
   there are numbers $\alpha,\kappa > 0$
   such that with probability at least $1-O(R^{-\alpha})$
   \begin{equation}\label{eq:test-fun}
      \reff^{\cG}\left(\partial_{\cG} B^{\cG}(0, R) \leftrightarrow \partial_{\cG} B^{\cG}(0, R^{1+\delta})\right) \geq \kappa.
   \end{equation}
   In particular, it holds that for every $\delta > 0$, almost surely eventually
   \[
      \reff^{\cG}\left(\partial_{\cG} B^{\cG}(0, R) \leftrightarrow \partial_{\cG} B^{\cG}(0, R^{1+\delta})\right) \geq \kappa.
   \]
   Since this holds for every $\delta > 0$,
   and $(\cG,0)$ is a unimodular random network, we have $\tilde{\zeta} = 0$.
\end{corollary}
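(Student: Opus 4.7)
The corollary is essentially a repackaging of \pref{lem:Gtest} via the scale-invariance of the mated-CRT, followed by a standard Borel--Cantelli step. The first task is to match \pref{lem:Gtest}, whose inner and outer radii depend on $\e$ through the awkward pair $\e^{-1/(d_{\gamma}\pm\delta)}$, to the form $R$ vs.\ $R^{1+\delta}$ required by the corollary. Given $\delta > 0$, I would choose $\delta' > 0$ so that $(d_{\gamma}+\delta')/(d_{\gamma}-\delta') = 1+\delta$ (explicitly $\delta' = \delta d_{\gamma}/(2+\delta)$), and set $\e := R^{-(d_{\gamma}+\delta')}$. With this choice, the inner and outer radii in \pref{lem:Gtest} become exactly $R$ and $R^{1+\delta}$, and the probability $1-O(\e^{\alpha(\delta',\gamma)})$ translates to $1-O(R^{-\alpha})$ for $\alpha := (d_{\gamma}+\delta')\,\alpha(\delta',\gamma) > 0$. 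Invoking the fact that $\cG^{\e}$ has the same law as $\cG$ for every $\e > 0$ then yields \eqref{eq:test-fun} with $\kappa := 1/A$.

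For the almost-sure conclusion, I would apply Borel--Cantelli along the dyadic subsequence $R_k := 2^k$. Since $\sum_k R_k^{-\alpha} < \infty$, eventually \eqref{eq:test-fun} holds at every $R_k$. Monotonicity of effective resistance (enlarging the inner boundary and shrinking the outer only decreases the resistance) then lets me interpolate to arbitrary $R \in [R_k, R_{k+1}]$, provided I absorb the constant-factor gap between $R_k$ and $R_{k+1}$. This is done by first running the probability bound with a slightly smaller exponent $\delta_0 \in (0,\delta)$, chosen so that $2^{1+\delta_0} \cdot R_k^{1+\delta_0} \leq R_k^{1+\delta}$ for all large $k$; the resulting resistance lower bound on $[R_k,R_{k+1}]$ is then at most a constant factor worse than $\kappa$, which can be absorbed into $\kappa$.

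Finally, to conclude $\tilde{\zeta} = 0$: by the definition \eqref{eq:zeta-def}, it suffices to show that for every $\delta > 0$, almost surely eventually $\reff^{\cG}(\rball(0,R^{1-\delta}) \lra \crball(0,R)) \geq R^{-\delta}$. Applying the corollary's inequality with $R$ replaced by $R^{1-\delta}$ and $\delta$ replaced by $\delta/(1-\delta)$, the resistance in question is bounded below by $\kappa$, which exceeds $R^{-\delta}$ for large $R$; hence $\tilde{\zeta} \geq 0$. Combined with the already-recorded $\tilde{\zeta}_0 = 0$ from \cite[Prop. 3.1]{GM21}, this forces $\tilde{\zeta} = \tilde{\zeta}_0 = 0$. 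The argument is a bookkeeping exercise — the genuine content lives in \pref{lem:Gtest} — and the only mildly delicate step is the Borel--Cantelli interpolation across dyadic scales, which is standard.
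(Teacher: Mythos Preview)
Your proposal is correct and follows essentially the same route as the paper: invoke \pref{lem:Gtest} together with the scale-invariance of the mated-CRT to get \eqref{eq:test-fun}, then run Borel--Cantelli along dyadic scales and interpolate by passing to a slightly different exponent. Your parameter matching in the first step and your monotonicity-based interpolation in the second are in fact spelled out more carefully than in the paper's proof, which handles the interpolation via the series law and a final passage from $\delta$ to $\delta' > \delta$.
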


\begin{proof}
   \eqref{eq:test-fun} follows immediately from \pref{lem:Gtest}.
   The other conclusion is a standard consequence:  The Borel-Cantelli lemma
   implies that almost surely, for all but finitely many $k \in \N$, we have
   \[
      \reff^{\cG}\left(\partial_{\cG} B^{\cG}(0, 2^k) \leftrightarrow \partial_{\cG} B^{\cG}(0, 2^{(1+\delta)k})\right) \geq \kappa,
   \]
   so by the series law for effective resistances, it holds that almost surely eventually
   \[
      \reff^{\cG}\left(\partial_{\cG} B^{\cG}(0, R) \leftrightarrow \partial_{\cG} B^{\cG}(0, 2 R^{1+\delta})\right) \geq 
      \reff^{\cG}\left(\partial_{\cG} B^{\cG}(0, 2^{\lfloor \log_2 R\rfloor}) \leftrightarrow \partial_{\cG} B^{\cG}(0, 2^{\lceil \log_2 (R^{1+\delta})\rceil})\right) \geq 
      \kappa,
   \]
   and thus for any $\delta' > \delta$, almost surely eventually $\reff^{\cG}\left(\partial_{\cG} B^{\cG}(0, R) \leftrightarrow \partial_{\cG} B^{\cG}(0, R^{1+\delta'})\right) \geq \kappa$.
\end{proof}

Note that since $\tilde{\zeta}=\tilde{\zeta}_0=0$ and $d_f$ exists \cite{DG20},
it follows from \pref{thm:main} that $d_w=d_f$ and $d_s=2$.  
Both equalities were known previously:
$d_s \leq 2$ from \cite{lee17a},
$d_w \leq d_f$ and $d_s \geq 2$ from \cite{GM21}, and
and $d_w \geq d_f$ from \cite{GH20}.
Let us remark that the preceding argument requires somewhat less
detailed information about $\cG$ than that of \cite{GH20}.
In particular, bounding $\tilde{\zeta}$ only requires control of one
scale at a time.

\subsubsection{Other planar maps}

We consider now the case of random planar maps that can be appropriately coupled to a $\gamma$-mated CRT for some $\gamma \in (0,2)$;
we refer to \cite{GHS20} for a discussion of such examples, including the UIPT, and
random planar maps whose law is biased by the number of different spanning trees ($\gamma=\sqrt{2}$),
bipolar orientations $(\gamma=\sqrt{4/3})$, or Schynder woods ($\gamma=1$).

Our goal is to prove that $\tilde{\zeta}=0$ for each of these random planar maps $(M,\rho)$.
We employ the same approach as in the preceding section, arguing that
an annulus in $(M,\rho)$ can be mapped into $\cG$ so that its image contains
an annulus of large width, and that the Dirichlet energy of functionals in $\cG$ is
controlled when pulling them back to $M$.

Fix $\gamma \in (0,2)$ and let $\cG$ be the $\gamma$-mated-CRT with increment $1$.
Let $\cG_n$ be the subgraph of $\cG$ induced on the vertices $[-n,n] \cap \Z$.
Parts (1)--(3) in the following theorem are the conjunction of Lemma 1.11 and Theorem 1.9 in \cite{GHS20}.
Part (4) is \cite[Lem. 4.3]{GM21}.

\begin{theorem}\label{thm:rough}
   For each model considered in \cite{GHS20}, the following holds.
   There is a coupling of $(M,\rho)$ and $(\cG,0)$, and
   a family of random rooted graphs $\{ (M_{n},\rho_n) : n \geq 1\}$ and numbers
   $\alpha, K, q > 0$
   such that for every $n \geq 1$, with probability at least $1-O(n^{-\alpha})$:
   \begin{enumerate}
      \item $B^{\cG}(0,n^{1/K}) \subseteq V(\cG_{n})$,
      \item The induced, rooted subnetworks $B^{M}(\rho,n^{1/K})$ and $B^{M_{n}}(\rho_{n},n^{1/K})$ are isomorphic.
      \item There is a mapping $\phi_n : V(M_n) \to V(\cG_n)$ with $\phi_n(\rho_n)=0$, and for all $3 \leq r \leq R$,
         \begin{align*}
            \phi_n\left(B^{M_n}\!\left(\rho_n, (K \log n)^{-q} (r-2)\right)\right) &\subseteq B^{\cG_n}(0,r) \\
            \phi_n\left(V(M_n) \setminus B^{M_n}\!\left(\rho_n, (K \log n)^{q} R - 1\right)\right) &\subseteq V(\cG_n) \setminus B^{\cG_n}(0,R).
         \end{align*}
      \item For every $f : V(\cG_n) \to \R$, it holds that
         \[
            \energy^{M_n}(f \circ \phi_n) \leq K (\log n)^q \energy^{\cG_n}(f).
         \]
   \end{enumerate}
\end{theorem}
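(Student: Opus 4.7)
The plan is to assemble the stated theorem by combining already-existing coupling results between random planar maps and the $\gamma$-mated-CRT; no genuinely new probabilistic estimates are needed here. All four items will be arranged on a single high-probability event, obtained by intersecting the ``good'' events from the two cited references.

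For parts (1)--(3), I would appeal to the coupling construction of Gwynne-Holden-Sun \cite{GHS20}. For each of the listed map models (UIPT, spanning-tree weighted, bipolar-oriented, Schnyder-wood), their Theorem~1.9 produces a joint law of $(M,\rho)$ and $(\cG,0)$ through a common space-filling SLE$_\kappa$ representation, and their Lemma~1.11 supplies the quantitative quasi-isometry between graph distances in $M$ and $\cG$ up to polylogarithmic multiplicative factors, with polynomial control on the failure probability. The truncated rooted graph $(M_n,\rho_n)$ is introduced only to make the pullback map $\phi_n$ well-defined on a finite window: I would take $M_n$ to be the induced subnetwork of $M$ on the preimage under the coupling of $\cG_n = \cG[[-n,n] \cap \Z]$ with $\rho_n=\rho$. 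Property (1) then reduces to the statement that $B^{\cG}(0,n^{1/K})\subseteq \cG_n$ with polynomial probability, a volume estimate for the mated-CRT; property (2) is immediate from the fact that the coupling identifies $M$ with $M_n$ on this common preimage; and property (3) is a direct unpacking of the distance-distortion bound from \cite[Lem. 1.11]{GHS20}, possibly absorbing additive constants into the polylogarithmic factor.

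For part (4), I would invoke \cite[Lem. 4.3]{GM21}, which bounds the Dirichlet energy of a pullback $f\circ\phi_n$ by $K(\log n)^q$ times the Dirichlet energy of $f$. The intuition is that $\phi_n$ is defined cell-by-cell, and the rough-isometry property from (3) controls how many cells of $\cG_n$ can be identified with a given cell of $M_n$ (and the number of neighbors in each direction) up to a polylogarithmic loss; each edge of $M_n$ is therefore charged to at most $(\log n)^q$ edges of $\cG_n$, giving the stated comparison.

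The only real work is bookkeeping: each of the four cited conclusions holds on its own event of probability at least $1-O(n^{-\alpha_i})$, and a union bound over these four events yields (1)--(4) simultaneously with a slightly degraded but still polynomially small failure probability $O(n^{-\alpha})$. The main subtlety to track is that the constants $K, q$ and the exponent $\alpha$ are uniform in $n$, but this uniformity is already built into the statements of \cite[Thm. 1.9, Lem. 1.11]{GHS20} and \cite[Lem. 4.3]{GM21}, so no further argument is required.
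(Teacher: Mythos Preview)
Your proposal is correct and matches the paper's own treatment: the paper does not prove this theorem but simply attributes parts (1)--(3) to the conjunction of Lemma~1.11 and Theorem~1.9 in \cite{GHS20}, and part (4) to \cite[Lem.~4.3]{GM21}. Your elaboration on how the pieces fit together (the construction of $M_n$, the union bound over the individual good events) is reasonable and goes slightly beyond what the paper writes, but the underlying approach is identical.
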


\begin{corollary}\label{cor:penpen}
   For any model considered in \cite{GHS20}, it holds that $\tilde{\zeta}=0$.
\end{corollary}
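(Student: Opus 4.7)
The plan is to transfer $\tilde{\zeta}=0$ from the mated-CRT $\cG$ (Corollary~\ref{cor:zeta-crt}) to the planar map $M$ via the coupling of Theorem~\ref{thm:rough}. Since $\tilde{\zeta}\leq \tilde{\zeta}_0$ always, and $\tilde{\zeta}_0\leq 0$ is expected to follow either from existing resistance estimates for these models (paralleling Gwynne--Miller's argument for the UIPT) or from a dual flow-pushforward through the coupling, the main content of the corollary is to verify $\tilde{\zeta}\geq 0$ for $M$.

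Fix $\delta>0$ and consider $R$ large. First I would pick $n=n(R)$ polynomial in $R$, large enough that $n^{1/K}>2R^{1+\delta}$ and the conclusions (1)--(4) of Theorem~\ref{thm:rough} hold on an event of probability at least $1-O(R^{-\beta})$ for some $\beta>0$. Next, set $r_0:=\lceil (K\log n)^q R\rceil+2$ and choose a small parameter $\delta'>0$ with $(K\log n)^q r_0^{1+\delta'}\leq R^{1+\delta}$; this is feasible because the polylogarithmic factor $(\log R)^q$ is dominated by $R^\delta$ for $R$ large. Apply Corollary~\ref{cor:zeta-crt} at scale $r_0$ to obtain, with probability $\geq 1-O(r_0^{-\alpha})$, a function $f:V(\cG)\to\R$ vanishing on $B^{\cG}(0,r_0)$, identically $1$ on $V(\cG)\setminus B^{\cG}(0,r_0^{1+\delta'})$, and with Dirichlet energy $\energy^{\cG}(f)\leq 1/\kappa$; restriction then yields $\energy^{\cG_n}(f|_{V(\cG_n)})\leq 1/\kappa$.

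The key step is to use $g:=f\circ\phi_n$ as a test function in $M_n$. Property~(3) of Theorem~\ref{thm:rough} gives $g\equiv 0$ on $B^{M_n}(\rho_n,R)$; and, using property~(1) at the relevant scale to identify $B^{\cG_n}(0,r_0^{1+\delta'})$ with $B^{\cG}(0,r_0^{1+\delta'})$, the same property gives $g\equiv 1$ on $V(M_n)\setminus B^{M_n}(\rho_n,R_1)$ with $R_1:=(K\log n)^q r_0^{1+\delta'}\leq R^{1+\delta}$. Property~(4) yields $\energy^{M_n}(g)\leq K(\log n)^q/\kappa$, so the Dirichlet principle \eqref{eq:dirichlet} gives
\[
\reff^{M_n}\!\left(B^{M_n}(\rho_n,R)\ \lra\ V(M_n)\setminus B^{M_n}(\rho_n,R^{1+\delta})\right)\ \geq\ \frac{\kappa}{K(\log n)^q}\ \geq\ R^{-\delta}
\]
for $R$ sufficiently large. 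Properties~(1)--(2) transfer this bound verbatim to $M$ itself, since both balls lie inside a rooted subnetwork isomorphic to $B^{M}(\rho,n^{1/K})$, and effective resistances computed inside a common subnetwork agree. A Borel--Cantelli argument along $R=2^k$, combined with monotonicity of effective resistance to interpolate to intermediate scales (exactly as in the last paragraph of the proof of Corollary~\ref{cor:zeta-crt}), upgrades the in-probability bound to the almost-sure eventually statement required by \eqref{eq:zeta-def}, yielding $\tilde{\zeta}\geq 0$.

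The main obstacle is the bookkeeping of the $(K\log n)^q$ losses in Theorem~\ref{thm:rough}, which appear both in the ball-comparison radii and in the Dirichlet-energy comparison. Naively they could spoil the argument, but the definition of $\tilde{\zeta}$ in \eqref{eq:zeta-def} permits polynomial slack $R^{\pm\delta}$, which comfortably absorbs polylogarithmic losses once $n$ is chosen polynomially in $R$. The conceptual novelty is that, unlike \cite{GH20}, this argument only needs the coupling of Theorem~\ref{thm:rough} at a single scale at a time, making it robust to the technical obstructions that prevented \cite{GH20} from handling the Schnyder-wood decorated triangulation.
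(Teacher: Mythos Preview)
Your proposal is correct and follows essentially the same approach as the paper: pull back a low-energy test function on the mated-CRT (from Corollary~\ref{cor:zeta-crt}) through the map $\phi_n$ of Theorem~\ref{thm:rough}, using (3)--(4) to control boundary values and Dirichlet energy in $M_n$, and then (2) to transfer to $M$, finishing with Borel--Cantelli. The only difference is cosmetic bookkeeping---the paper applies \eqref{eq:test-fun} at the $\cG$-scale $R$ and lands in an $M$-annulus between radii $\tilde r\approx R/(\log n)^q$ and $\tilde R\approx R^{1+\delta}(\log n)^q$, whereas you apply it at scale $r_0\approx R(\log n)^q$ to land between $R$ and $R^{1+\delta}$; just make sure $\delta'$ is fixed (e.g.\ $\delta'=\delta/2$) before letting $R\to\infty$, so that the constants $\alpha,\kappa$ from Corollary~\ref{cor:zeta-crt} do not vary with $R$.
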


We prove this momentarily, but first note the following consequence.
Since $d_f > 2$ for each of these models \cite[Prop. 4.7]{DG20},
and $\tilde{\zeta}_0=0$ by \cite[Prop. 4.4]{GM21}, \pref{thm:main} yields:

\begin{theorem}
   For any model considered in \cite{GHS20}, it holds that $d_w=d_f > 2$ and $d_s = 2$.
\end{theorem}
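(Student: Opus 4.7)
The plan is to combine the three ingredients---$\tilde{\zeta}=0$, $\tilde{\zeta}_0=0$, and existence of $d_f > 2$---and invoke \pref{thm:main}, which was designed exactly for this conclusion. First I would verify the standing hypotheses of \pref{thm:main}: each random planar map $(M,\rho)$ in \cite{GHS20} is a unimodular random graph with $\mathbb{E}[\deg_M(\rho)] < \infty$, so the change of law described in \pref{sec:stationary} yields a reversible random network with $\mathbb{E}[1/c^M_{\rho}]<\infty$. This is standard for each of the listed models (UIPT, spanning-tree-weighted, bipolar, Schnyder-wood, etc.).

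Second, I would collect the two input facts required by \pref{thm:main}: existence of the fractal dimension $d_f$, together with the bound $d_f > 2$, is \cite[Prop. 4.7]{DG20}, and the upper resistance bound $\tilde{\zeta}_0 = 0$ is \cite[Prop. 4.4]{GM21}. The matching lower resistance bound $\tilde{\zeta} \geq 0$ is \pref{cor:penpen}, whose deferred proof does the real work via the coupling of \pref{thm:rough} together with the $\gamma$-mated-CRT estimate \pref{cor:zeta-crt}. Combining, $\tilde{\zeta} = \tilde{\zeta}_0 = 0$.

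With these hypotheses in place, \pref{thm:main} immediately yields
\[
   d_w \;=\; d_f + \tilde{\zeta} \;=\; d_f \;>\; 2, \qquad d_s \;=\; \frac{2 d_f}{d_w} \;=\; 2,
\]
which is the claim. There is no genuine obstacle at this stage: all the difficulty has been packaged into \pref{cor:penpen} (whose proof rests on transporting the mated-CRT resistance bound through the coupling and Dirichlet-energy comparison of \pref{thm:rough}) and in the external results cited for $d_f$ and $\tilde{\zeta}_0$. The present theorem is therefore a direct application of the main scaling relation, and its role is to show that the quenched exponents $d_w$ and $d_s$ exist and take their predicted values uniformly across the models treated in \cite{GHS20}.
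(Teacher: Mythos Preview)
Your proposal is correct and matches the paper's own argument essentially line for line: the paper states the theorem as an immediate consequence of $\tilde{\zeta}=0$ (\pref{cor:penpen}), $\tilde{\zeta}_0=0$ (\cite[Prop.~4.4]{GM21}), $d_f>2$ (\cite[Prop.~4.7]{DG20}), and \pref{thm:main}. Your additional remark verifying the reversibility/integrability hypothesis is a welcome bit of care that the paper leaves implicit.
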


\begin{remark}\label{rem:models}
We remark that the lower bound $d_s \geq 2$ is established in \cite{GM21}, and the upper bound
$d_s \leq 2$ follows for any unimodular random planar graph where the degree
of the root has superpolynomial tails \cite{lee17a}
(which is true for each of these models; see \cite[\S 1.3]{GM21}).
The consequence $d_w = d_f$ is proved in \cite{GH20} for every model
except the uniform infinite Schynder-wood decorated triangulation.
This is for a technical reason underlying the identification
of $V(M_n)$ with a subset of $V(M)$ used in the proof of \cite[Lem. 1.11]{GHS20} (see \cite[Rem. 1.3]{GHS20} and
\cite[Rem. 2.11]{GH20}).
\end{remark}

\begin{proof}[Proof of \pref{cor:penpen}]
   Fix $\delta > 0$ and $R \geq 2$.
   Denote
   \begin{align*}
      \tilde{r} &\seteq (K \log n)^{-q} (R-2), \\
      \tilde{R} &\seteq (K \log n)^q R^{1+\delta}, \\
      n &\seteq \lceil \tilde{R}^K\rceil,
   \end{align*}
   and let $\cE_n$ be an event on which
   \pref{thm:rough}(1)--(4) and \eqref{eq:test-fun} hold.  Note that we can take $\Pr(\cE_n) \geq 1-O(R^{-\alpha'})$
   for some $\alpha' = \alpha'(\delta,K) > 0$.

   Assume now that $\cE_n$ holds.
   Then \eqref{eq:test-fun} and the Dirichlet principle \eqref{eq:dirichlet}
   give a test function $f : V(\cG) \to \R$ such that 
   \[
      f(B^{\cG}(0,R)) = 0, \qquad f(\partial_{\cG} B^{\cG}(0,R^{1+\delta}))=1, \qquad \energy^{\cG}(f) \leq 1/\kappa.
  \]
  \pref{thm:rough}(1) asserts that the restriction of $f$ to $B^{\cG}(0,R^{1+\delta})$
  gives a function $\tilde{f} : V(\cG_n) \to \R$ on which
   \[
      \tilde{f}(B^{\cG_n}(0,R))=0, \qquad \tilde{f}(\partial_{\cG} B^{\cG_n}(0,R^{1+\delta}))=1, \qquad
   \energy^{\cG_n}(\tilde{f}) \leq 1/\kappa.
   \]
   Without increasing the energy of $\tilde{f}$, we may assume that $\tilde{f}(V(\cG_n) \setminus B^{\cG_n}(0,R^{1+\delta}))=1$ as well.

   \smallskip

   By our choice of $\tilde{r}$ and $\tilde{R}$,
   \pref{thm:rough}(3) implies that
   \[
      \tilde{f} \circ \phi_n(B^{M_n}(\rho,\tilde{r})) = 0, \qquad \tilde{f} \circ \phi_n(\partial_{M_n} B^{M_n}(\rho,\tilde{R}))=1, \qquad \energy^{M_n}(\tilde{f} \circ \phi_n) \leq K' (\log R)/\kappa,
  \]
  where the last inequality is from \pref{thm:rough}(4), and $K' = K'(K,q,\delta)$.
  Now the Dirichlet principle \eqref{eq:dirichlet} yields
  \[
     \reff^{M_n}\left(\partial_{M_n} B^{M_n}(\rho_n,\tilde{r}) \leftrightarrow \partial_{M_n} B^{M_n}(\rho_n,\tilde{R})\right)
     \geq \frac{1}{K' (\log R)/\kappa},
  \]
  and from the graph isomorphism \pref{thm:rough}(2) and the fact that $n^{1/K} \geq \tilde{R}$, we conclude that
  \[
     \reff^{M}\left(\partial_{M} B^{M}(\rho,\tilde{r}) \leftrightarrow \partial_{M} B^{M}(\rho,\tilde{R})\right)
     \geq \frac{1}{K' (\log R)/\kappa}.
  \]
  Since this conclusion holds with probability at least $1-O(R^{-\alpha'})$, we conclude (using Borel-Cantelli
  as in the proof of \pref{cor:zeta-crt})
  that for every $\delta > 0$, almost surely eventually
  \[
     \reff^{M}\left(\partial_{M} B^{M}(\rho,R) \leftrightarrow \partial_{M} B^{M}(\rho,R^{1+\delta})\right)
     \geq R^{-\delta}.
  \]
  This yields $\tilde{\zeta}=0$, completing the proof.
\end{proof}

\subsection{Random walk driven by the GFF}
\label{sec:gff}

Denote by
$\bm{\eta} = \{ \eta_v : v \in \Z^2 \}$ the centered Gaussian process with $\eta_0 = 0$
and covariances
$\E[\eta_u \eta_v]=\green^{\Z^2}_{\Z^2 \setminus \{0\}}(u,v)$ for all $u,v \in \Z^2$,
where we recall the Green kernel from \pref{sec:resistance}.

Fix $\gamma > 0$, and 
define $G=\Z^2$ with $E(G) = \left\{ \{u,v\} \subseteq \Z^2 : \|u-v\|_1 = 1 \right\}$, and the conductances
\[
   \con^G(\{u,v\}) \seteq e^{\gamma(\eta_u-\eta_v)}, \qquad \{u,v\} \in E(G).
\]
Since the edge conductances only depend on the differences $\eta_u-\eta_v$, the
law of the conductances is translation invariant, and thus $(G,0,\con^G)$ is a reversible random network.
Moreover, $\E[1/\con^G_0] < \infty$ follows from the fact that $\eta_u-\eta_v$ is a Gaussian of variance 
of bounded variance for $\{u,v\} \in E(G)$.

\begin{theorem}\label{thm:gff}
   For the reversible random network $(G,0,\con^G)$, it holds that
   \begin{align*}
      d_f &= \begin{cases}
            2 + 2(\gamma/\gamma_c)^2 & \gamma \leq \gamma_c = \sqrt{\pi/2}, \\
            4 \gamma/\gamma_c & \textrm{otherwise,}
         \end{cases} \\
      \tilde{\zeta} &= 0, \\
      \tilde{\zeta}_0 &= 0, \\
      d_w &= d_f,  \\
      d_s &= 2.
   \end{align*}
\end{theorem}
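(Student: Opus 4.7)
The plan is to reduce the theorem to two inputs about $(G,0,\con^G)$ and then invoke \pref{thm:main}. Since this reversible random network satisfies $\E[1/\con^G_0]<\infty$ (as $\eta_u-\eta_v$ has bounded Gaussian variance for neighbors), it suffices to verify (i) that $d_f$ exists and has the stated formula, and (ii) that $\tilde{\zeta}=\tilde{\zeta}_0=0$. Given these, \pref{thm:main} automatically yields $d_w=d_f+\tilde{\zeta}=d_f$ and $d_s=2d_f/d_w=2$. Input (i) is precisely the volume-growth theorem in \cite{BDG20}, so the real content is (ii).

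For the upper bound $\tilde{\zeta}_0\leq 0$, one needs that for each $\delta>0$, $\reff^G(0\lra\crball(0,R))\leq R^{\delta}$ almost surely eventually. The identity $\green^G_{\rball(0,R)}(0,0)=\con^G_0\,\reff^G(0\lra\crball(0,R))$ from \eqref{eq:basic-green} reduces this to a quenched upper bound on the expected number of returns to the origin before the walk exits $\rball(0,R)$. Such bounds are exactly the content of the quenched heat-kernel estimates in \cite{BDG20}; their in-expectation version gives $\E[\reff^G(0\lra\crball(0,R))]\leq R^{o(1)}$, and I would upgrade this to an almost sure statement along the dyadic sequence $R=2^k$ via Markov and Borel--Cantelli, then interpolate using monotonicity of resistance in the set being hit.

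For the lower bound $\tilde{\zeta}\geq 0$, by \pref{lem:reff-duality} it is equivalent to upper-bound the modulus $\amod^G(0,R^{1-\delta},R)\leq R^{\delta}$ almost surely eventually. The Dirichlet principle \eqref{eq:dirichlet} lets one exhibit such a modulus bound via a test function: take $f(x)=\min\bigl(1,\frac{\log(\|x\|_2 / R^{1-\delta})}{\delta\log R}\bigr)_+$, which separates the inner and outer boundaries and whose unweighted Dirichlet energy on $\Z^2$ is $O(1/(\delta\log R))$ by the standard logarithmic harmonic computation. Its weighted Dirichlet energy $\energy^G(f)=\sum_{\{u,v\}}\con^G(\{u,v\})(f(u)-f(v))^2$ is at most $R^{o(1)}$ almost surely, because the log-correlated field $\bm{\eta}$ produces only subpolynomial fluctuations of $\max_{x\in B(0,R)}|\eta_x|$ together with the subpolynomial maximum of $\con^G$ over nearest-neighbor edges inside $B(0,R)$---both of which are standard Gaussian tail estimates for the 2D GFF available in \cite{BDG20}. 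Passing from single-scale high-probability estimates to the required ``almost surely eventually'' statement is again a routine Borel--Cantelli argument along a dyadic grid.

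The one step that requires care is quantifying the polynomial (not merely subpolynomial) concentration rates in \cite{BDG20} needed to make both Borel--Cantelli arguments work; while the single-scale bounds are explicitly there, extracting polynomial tails to sum over a geometric sequence of scales is the part that requires digging into their estimates. Once this is done, the rest of the theorem is a direct application of \pref{thm:main}, with no further use of the GFF structure.
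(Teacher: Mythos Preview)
Your reduction to (i) and (ii) and the invocation of \pref{thm:main} is correct, as is your handling of $d_f$ and $\tilde{\zeta}_0$ (the paper simply cites the latter from \cite[Thm.~1.4(1.11)]{BDG20}). The gap is in your argument for $\tilde{\zeta}\geq 0$.

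Your test-function bound hinges on the claim that $\max_{\{u,v\}\subseteq B(0,R)}\con^G(\{u,v\})$ is $R^{o(1)}$. This is false for the model at hand. While $\max_{x\in B(0,R)}|\eta_x|=O(\log R)$ is indeed subpolynomial, exponentiating gives conductances of order $e^{c\gamma\log R}=R^{c\gamma}$, which is genuinely polynomial in $R$. Indeed, the very fact that $d_f>2$ for every $\gamma>0$ forces $\vol^G(0,R)\sim R^{d_f}$, so even the \emph{average} edge conductance in $B(0,R)$ grows like $R^{d_f-2}$. Bounding $\energy^G(f)$ by the maximum conductance times the unweighted energy therefore only yields $\reff\geq R^{-c}$ for some $c>0$, giving $\tilde{\zeta}\geq -c$ rather than $\tilde{\zeta}\geq 0$. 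Averaging does not help either: $\E[\energy^G(f)]$ already grows polynomially for the logarithmic test function, for the same reason.

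The paper's route is different and does not attempt a direct test function. It extracts from inside the argument of \cite{BDG20} (their event $F_k$) that for each dyadic annulus $\cS(2\cdot 8^{n-k}N)\leftrightarrow \Z^2\setminus\cS(4\cdot 8^{n-k}N)$, the resistance across it is at least $e^{-C\log\log N}$ with probability uniformly bounded below by some $c>0$ (this is \pref{lem:bdg}). The key structural point is that these events at distinct $k$ involve \emph{disjoint} edge sets and are therefore independent; taking a union over $\sim\delta\log N$ consecutive scales boosts the success probability to $1-(1-c)^{\delta\log N}\geq 1-N^{-c\delta}$, which is summable. The series law for resistances plus Borel--Cantelli then gives the almost-sure-eventually statement. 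In short, \cite{BDG20} already does the hard work of beating the polynomial conductance growth on a \emph{single} annulus with positive probability; the paper's contribution here is the independence-across-scales amplification, which is precisely what your direct approach lacks.
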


Since the value of $d_f$ is elementary to calculate (see \cite{BDG20} for details),
and $\tilde{\zeta}_0 = 0$ is the content of Theorem 1.4(1.11) in \cite{BDG20},
we can apply \pref{thm:main}, and what remains is to verify $\tilde{\zeta}=0$.
To this end,
denote $\cS(N) \seteq [-N,N]^2 \cap \Z^2$.
Theorem 1.4 in \cite{BDG20} (specifically, equation (1.12))
establishes that
\begin{equation}\label{eq:bdg20}
   \liminf_{N \to \infty} \frac{\log \reff^G\left(0 \lra \Z^2 \setminus \cS(N)\right)}{(\log N/\log \log \log N)^{1/2}} > 0.
\end{equation}
In a moment, we will observe the following consequence of their argument.

\begin{lemma}\label{lem:bdg}
   There is some $c = c(\gamma) > 0$ such that 
   for every $N \geq 8$ sufficiently large, the following holds. For $1 \leq k \leq n-1$, where $n = \lfloor \log_8(N)\rfloor$,
   let $\cE_k$ denote the event
   \[
      \reff^G\left(\cS(2 \cdot 8^{n-k} N) \lra \Z^2 \setminus \cS(4 \cdot 8^{n-k} N) \right) \geq 
      e^{- (1/c) \log \log (N)}
   \]
   Then $\Pr(\cE_k) > c$ for each $k \in \{1,2\ldots,n-1\}$.
\end{lemma}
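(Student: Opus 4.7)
The plan is to extract a per-scale annular resistance estimate from the multi-scale construction underlying the proof of \cite[Thm.~1.4 (1.12)]{BDG20}, combined with translation invariance of the conductance law.

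First observe that the joint law of $\{\con^G(e) : e \in E(G)\}$ is translation invariant on $\Z^2$. Indeed, $\con^G$ depends on the GFF only through edge-differences $\eta_u - \eta_v$, whose joint law is translation invariant even though $\eta$ is grounded at the origin. This lets one freely re-center annuli when invoking per-scale estimates and ensures that the distribution of $\reff^G(\cS(2R) \lra \Z^2 \setminus \cS(4R))$ is independent of location.

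The proof of (1.12) in \cite{BDG20} is a multi-scale analysis: at each dyadic scale $R$ they identify a good event $\cG_R$ of probability bounded below by a constant $c_0 = c_0(\gamma) > 0$, on which one constructs a test potential $\phi_R$ in an annular region at scale $R$, with boundary values $0$ on $\cS(2R)$ and $1$ outside $\cS(4R)$, whose Dirichlet energy is at most $(\log R)^{C}$ for some $C = C(\gamma) > 0$. Aggregating these per-scale contributions across $\Theta\bigl((\log N/\log\log\log N)^{1/2}\bigr)$ scales, using superadditivity of effective resistance in series, produces (1.12). Isolating the per-scale statement at $R = R_k$ and applying the Dirichlet principle \eqref{eq:dirichlet}, on $\cG_{R_k}$ one obtains
\[
   \reff^G\!\left(\cS(2 R_k) \lra \Z^2 \setminus \cS(4 R_k)\right) \geq \frac{1}{\energy^G(\phi_{R_k})} \geq (\log R_k)^{-C}.
\]
Since $R_k = 8^{n-k} N \leq 8^{n-1} N \leq N^2$ for all $k \in \{1, \ldots, n-1\}$, we have $\log R_k \leq 2 \log N$, whence $(\log R_k)^{-C} \geq (2 \log N)^{-C} \geq e^{-(C+1)\log\log N}$ for $N$ sufficiently large. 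Taking $c = \min(c_0, 1/(C+1))$ yields the claim.

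The hard part lies in the per-scale extraction: verifying that \cite{BDG20}'s construction genuinely provides a constant-probability good event at each scale, and not merely a high-probability bound whose constant might degrade as the number of scales grows. Their argument is written to produce the aggregate bound (1.12), so one must trace through their multi-scale construction to isolate the contribution of a single scale and confirm that the associated good event has probability bounded below by a constant uniform in $R$. This constant-probability feature is typical of multi-scale Gaussian constructions based on Girsanov-style barrier events for the 2D GFF, but it is exactly the step requiring genuine engagement with the specifics of \cite{BDG20}.
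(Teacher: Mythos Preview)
Your high-level approach---extract a single-scale annular resistance bound from the multi-scale argument of \cite{BDG20}---is exactly what the paper does. However, your description of what that single-scale extraction yields is inaccurate, and this is where the actual content lies.

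You assert that on the good event $\cG_R$ one has a deterministic Dirichlet energy bound $(\log R)^C$, hence a deterministic resistance lower bound $(\log R)^{-C}$. This is not what \cite{BDG20} provides. The relevant event is $F_k$ from \cite[Eq.~(5.79)]{BDG20}, which has $\Pr(F_k) > c_0 > 0$, but on $F_k$ the resistance lower bound still carries a \emph{random} prefactor:
\[
   \reff^G\!\left(\cS(2\cdot 8^{n-k}N) \lra \Z^2 \setminus \cS(4\cdot 8^{n-k}N)\right) \geq e^{-2\gamma(\Delta_k - S_k)} e^{-3\hat{\mathfrak{c}}\log\log N},
\]
where $S_k$, $\Delta_k$, and $\1_{F_k}$ are mutually independent, $S_k$ has symmetric law, and $\Delta_k$ has sub-Gaussian tails (\cite[Eq.~(5.74)]{BDG20}). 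One must then argue that $\Delta_k - S_k$ is bounded by a constant with probability at least $1/2$ (symmetry of $S_k$ gives $\Pr(S_k \geq 0) \geq 1/2$; the tail bound on $\Delta_k$ gives $\Pr(\Delta_k \leq C) \geq 1/2$ for suitable $C$), and combine with independence from $F_k$ to get $\Pr(\cE_k) \geq c_0/4$. Your final paragraph correctly flags that ``genuine engagement with the specifics of \cite{BDG20}'' is required, but the specifics are precisely this independence-and-tails step, not merely confirming uniformity of a constant.

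Also, the translation-invariance paragraph is a red herring: all the annuli $\cS(2\cdot 8^{n-k}N)$, $\cS(4\cdot 8^{n-k}N)$ are already centered at the origin, so no re-centering is needed.
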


\begin{proof}[Proof of \pref{thm:gff}]
Fix some $\delta > 0$, and define $n' \seteq \lceil \delta \log (N)\rceil$.
Since the events $\{ \cE_k : 1 \leq k \leq n-1 \}$
involve disjoint sets of edges, they are independent, and we have
\[
   \Pr\left(\cE_{1} \vee \cE_{2} \vee \cdots \vee \cE_{n'}\right) \geq 1 - \left(1-c\right)^{n'} \geq 1 - N^{-\delta c}.
\]
Moreover, the series law for effective resistances gives
\[
   \reff^G\left(\cS(2 \cdot 8^{n-1} N) \lra \Z^2 \setminus \cS(4 \cdot 8^{n-n'} N)\right)
   \geq \1_{\{\cE_1 \vee \cE_2 \vee \cdots \vee \cE_{n'}\}} 
   e^{-(1/c) \log \log (N)}.
\]
Thus an application of Borel-Cantelli yields:
Almost surely eventually,
\[
   \reff^G\left(\cS(R) \lra \Z^2 \setminus \cS(R^{1+\delta})\right) \geq R^{-\delta}.
\]
Since $\delta > 0$ could be chosen arbitrarily small, we conclude that $\tilde{\zeta}=0$.
\end{proof}

Let us finally indicate how \pref{lem:bdg} follows from the arguments in \cite{BDG20}.
The authors define in \cite[Eq. (5.79)]{BDG20} an event $F_k$ such that $\Pr(F_k) > c_0 > 0$, and
on $F_k$ it holds that
\[
      \reff^G\left(\cS(2 \cdot 8^{n-k} N) \lra \Z^2 \setminus \cS(4 \cdot 8^{n-k} N) \right) \geq 
      e^{-2\gamma(\Delta_k-S_k)} e^{-3 \hat{\mathfrak{c}} \log \log (N)},
\]
where $S_k$, $\Delta_k$, and $\1_{F_k}$ are mutually independent random variables, $c_0 > 0$
is a universal constant, and $\hat{\mathfrak{c}} = \hat{\mathfrak{c}}(\gamma) > 0$.
We remark that in \cite{BDG20}, the exponent is given as $-3 \hat{\mathfrak{c}} \log (b^k)$ (where $b=8$), but
the correct value (as stated in \cite[Lem. 4.13]{BDG20}) is $-3 \hat{\mathfrak{c}} \log \log (b^k)$.
(And, indeed, the correct quantitative dependence is needed to conclude \eqref{eq:bdg20}.)

Moreover, it holds that the law of $S_k$ is symmetric, and (5.74) in \cite{BDG20}
asserts that for some constants $c_1,c_2 > 0$
and all $t \geq 0$,
\[
   \Pr(\Delta_k \geq c_1+t)\leq e^{-c_2 t^2}.
\]
We conclude that for some number $C > 0$ chosen sufficiently large,
\[
   \Pr\left(\reff^G\left(\cS(2 \cdot 8^{n-k} N) \lra \Z^2 \setminus \cS(4 \cdot 8^{n-k} N) \right) \geq 
   e^{-2C\gamma} e^{-3 \hat{\mathfrak{c}} \log \log (N)}\right) \geq \frac{c_0}{4} > 0,
\]
thereby verifying \pref{lem:bdg}.

\subsection*{Acknowledgements}

The author thanks Jian Ding, Ewain Gwynne, Mathav Murugan, and Asaf Nachmias for useful feedback and pointers to the literature,
and Jian for suggesting that our main theorem is applicable to the model in \cite{BDG20}.
This work was partially funded by a Simons Investigator Award.

\bibliographystyle{alpha}
\bibliography{diffusive}

\end{document}